\newcommand \N {\mathbb{N}}
\newcommand \R {\mathbb{R}}
\newcommand \C {\mathbb{C}}
\newcommand \Z {\mathbb{Z}}
\newcommand \Hh {\mathbb{H}}
\newcommand \Oh {\mathcal{O}}
\newcommand \la {\langle}
\newcommand \ra {\rangle}
\newcommand \D {\partial}
\newcommand \eps {\varepsilon}
\newcommand \Def {\stackrel{\textrm{def}}=}
\DeclareMathOperator \re {Re}
\DeclareMathOperator \im {Im}
\DeclareMathOperator \ad {ad}
\DeclareMathOperator \supp {supp}
\DeclareMathOperator \chsupp {chsupp}
\DeclareMathOperator \WF {WF}
\DeclareMathOperator \Op {Op}
\DeclareMathOperator \Id {Id}
\DeclareMathOperator \rank {rank}
\DeclareMathOperator \vol {vol}
\DeclareMathOperator \erfc {erfc}
\newtheorem{prop}{Proposition}
\newtheorem{lem}[prop]{Lemma}
\newtheorem*{thm}{Theorem}
\newtheorem*{conj}{Conjecture}
\theoremstyle{definition}
\numberwithin{equation}{section}
\numberwithin{prop}{section}
\numberwithin{figure}{section}
\title
[Resonance free regions for nontrapping manifolds with cusps]
{Resonance free regions for nontrapping manifolds with cusps}
\author[Kiril Datchev]
{Kiril Datchev}
\address{Mathematics Department, Massachusetts Institute of Technology, Cambridge, MA
02139}
\email{datchev@math.mit.edu}
\begin{document}

\begin{abstract}
We prove resolvent estimates for nontrapping manifolds with cusps which imply the existence of 
arbitrarily wide resonance free strips,  local smoothing for the Schr\"odinger equation, and resonant wave expansions. We obtain lossless limiting absorption and local smoothing estimates, 
but the estimates on the holomorphically continued resolvent exhibit losses. 
We prove that these estimates are optimal in certain respects.
\end{abstract}

\maketitle

\addtocounter{section}{1}
\addcontentsline{toc}{section}{1. Introduction}
\thispagestyle{empty}

Resonance free regions near the essential spectrum have been extensively studied since the foundational work of Lax-Phillips and Vainberg. Their  size  is  related to the dynamical structure of the set of trapped classical trajectories. More trapping typically results in a smaller region, and the largest resonance free regions exist when there is no trapping.

\noindent \textbf{Example.} Let $\Hh^2$ be the hyperbolic upper half plane. Let $(X, g)$ be a nonpositively curved, compactly supported metric perturbation of the quotient space $\langle z \mapsto z+1 \rangle\backslash \Hh^2$. As we show in \S\ref{s:examples}, there are no trapped geodesics (that is, all geodesics are unbounded).

Let $(X,g)$ be as  above or as in \S \ref{s:assumptions}, with dimension $n+1$ and Laplacian $\Delta \ge 0 $. The resolvent $(\Delta - n^2/4 - \sigma^2)^{-1}$ is holomorphic for $\im \sigma > 0$, except at any $\sigma \in i \R$ such that $\sigma^2 + n^2/4$ is an eigenvalue, and has essential spectrum $\{\im \sigma = 0\}$: see Figure \ref{f:intro}. 

\begin{thm}
 For all $\chi \in C_0^\infty(X)$, there exists $M_0 > 0$ such that for all $M_1>0$ there exists $M_2 >0$ such that the cutoff resolvent $\chi (\Delta - n^2/4 - \sigma^2)^{-1} \chi$
continues holomorphically  to $\{|\re \sigma| \ge M_2,\, \im \sigma \ge - M_1\}$, where it obeys the estimate
\begin{equation}\label{logreg}
\|\chi (\Delta - n^2/4 - \sigma^2)^{-1} \chi\|_{L^2(X) \to L^2(X)} \le M_2|\sigma|^{-1 +M_0|\im \sigma|}.
\end{equation}
\end{thm}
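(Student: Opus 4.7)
The plan is a gluing construction: decompose $X$ into a cusp end $X_c$ and a nontrapping asymptotically hyperbolic region $X_h$ (the cusp being described by $([y_0,\infty)\times\mathbb{T}^n, (dy^2+dx^2)/y^2)$ in the model case), choose cutoffs $\chi_c,\chi_h$ with $\chi_c+\chi_h=1$ on $\supp\chi$, and build an approximate inverse
\begin{equation*}
Q(\sigma) = \chi_c R_c(\sigma)\tilde\chi_c + \chi_h R_h(\sigma)\tilde\chi_h
\end{equation*}
where $\tilde\chi_*$ is equal to $1$ on $\supp\chi_*$ and $R_c, R_h$ are resolvents for model problems: $R_h$ for a complete nontrapping asymptotically hyperbolic manifold obtained by capping off the cusp, and $R_c$ for the exact cusp half-cylinder with Dirichlet (or some matching) condition at $y=y_0$. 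Then $(\Delta-n^2/4-\sigma^2)Q = \Id + K$ where $K = [\Delta,\chi_c]R_c\tilde\chi_c + [\Delta,\chi_h]R_h\tilde\chi_h$ is supported in the overlap region. Solving formally yields $\chi R(\sigma)\chi = \chi Q(\sigma)(\Id+K(\sigma))^{-1}\chi$, so everything reduces to (a) high-energy bounds for $R_c$ and $R_h$ on $\{\im\sigma\ge -M_1\}$, and (b) showing $\Id+K(\sigma)$ is invertible there with a controlled bound.

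\textbf{Model resolvents.} On $R_h$, the nontrapping asymptotically hyperbolic theory (Mazzeo--Melrose plus Cardoso--Vodev/Vasy nontrapping estimates) continues $\tilde\chi_h R_h \tilde\chi_h$ meromorphically to all strips $\{\im\sigma\ge -M_1\}$ with the lossless bound $O(|\sigma|^{-1})$. For $R_c$, the crucial step is to Fourier decompose in $x\in\mathbb{T}^n$: the zero mode $u_0(y)$ satisfies a one-dimensional problem on $[y_0,\infty)$ that conjugates (via $y=e^t$, $y^{1/2}u$) to a free half-line operator $-\partial_t^2-\sigma^2$, which continues holomorphically across $\im\sigma=0$ with an $O(|\sigma|^{-1})$ bound. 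Each nonzero mode $k\in\mathbb{Z}^n\setminus 0$ yields an operator of the form $-\partial_t^2 + (2\pi|k|)^2 e^{2t} - \sigma^2$ on the half-line, whose potential is confining at $t=+\infty$, and whose Green's function admits an explicit representation in modified Bessel functions $K_{i\sigma}(2\pi|k|e^t)$ and $I_{i\sigma}(2\pi|k|e^t)$. The uniform asymptotics of these Bessel functions in the complex $\sigma$-plane show that each mode contributes a factor $\exp(c|\im\sigma|\log(|\sigma|/|k|))$ when $|k|\lesssim|\sigma|$ (the classically allowed region), and is exponentially small otherwise. Summing over $k$ gives
\begin{equation*}
\|\tilde\chi_c R_c(\sigma)\tilde\chi_c\|_{L^2\to L^2} \le C|\sigma|^{-1+M_0|\im\sigma|}
\end{equation*}
in the desired strip, for some $M_0$ independent of $M_1$.

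\textbf{Gluing and the main obstacle.} Plugging into the parametrix, the error $K$ is supported on the compact overlap of the cutoffs, so it is compact; moreover, by inserting a semiclassical propagation-of-singularities argument (the commutator $[\Delta,\chi_h]$ microlocalizes to a neighborhood of the cusp boundary, and nontrapping propagation moves mass out to the asymptotically hyperbolic end) one obtains $\|K(\sigma)\|\le C|\sigma|^{-\delta+M_0|\im\sigma|}$ for some $\delta>0$. For $|\re\sigma|\ge M_2$ with $M_2$ chosen large depending on $M_1$, this is smaller than $1/2$, so $(\Id+K)^{-1}$ exists as a Neumann series and the claimed estimate \eqref{logreg} follows. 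The main technical obstacle I expect is the cusp analysis for nonzero modes in the transition regime $|k|\sim|\sigma|/(2\pi)$ where the turning point sits at the mouth of the cusp; here one needs uniform Bessel asymptotics across the Stokes line to track the exponent in $|\sigma|^{M_0|\im\sigma|}$ precisely, and to check that neither the $k$-sum nor the commutator term introduces any extra logarithmic loss beyond what appears in the final estimate.
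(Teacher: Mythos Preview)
Your overall architecture (model resolvents plus gluing) matches the paper's, but the gluing step as you describe it has a genuine gap.

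\textbf{The error term is not small.} You claim that propagation of singularities gives $\|K(\sigma)\|\le C|\sigma|^{-\delta+M_0|\im\sigma|}$ for some $\delta>0$, and then invert $\Id+K$ by Neumann series. This bound does not hold. In semiclassical notation ($h\sim|\sigma|^{-1}$), the commutators $[\Delta,\chi_\bullet]$ cost one factor of $h^{-1}$ against the $H^2_h$ gain of the model resolvent, so each error term $A_\bullet=[\Delta,\chi_\bullet]R_\bullet\tilde\chi_\bullet$ has $L^2\to L^2$ norm $O(1)$ for the lossless model and $O(|\sigma|^{M_0|\im\sigma|})$ for the cusp model. There is no $\delta>0$ gain; for $\im\sigma<0$ the cusp piece of $K$ actually grows. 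Propagation of singularities does not make $K$ itself small: it only controls \emph{where} $A_\bullet$ is microsupported (outgoing versus incoming). The paper exploits this not by bounding $\|K\|$ but by iterating the parametrix, writing out the finitely many words in $A_C,A_K,A_F$ that appear in $(\Id+K)^{-1}$ up to some length, and showing that all sufficiently long words are $O(h^\infty)$ because they would require a bicharacteristic to traverse a dynamically forbidden pattern (e.g.\ compact $\to$ funnel $\to$ compact, or compact $\to$ cusp $\to$ compact $\to$ cusp). This uses both convexity of $r$ near infinity and the assumption that each geodesic visits the cusp in a connected time interval. Your single Neumann-series step skips this entire mechanism.

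\textbf{Model choices.} Two further differences matter for making the iteration work. First, the paper uses three models ($P_C,P_K,P_F$), each with a complex absorbing potential outside its region of validity, rather than your two models with a Dirichlet wall on the cusp and a cap on the funnel side. Complex absorption damps bicharacteristics instead of reflecting them, so each model resolvent genuinely propagates singularities \emph{forward}; a Dirichlet boundary at the cusp mouth would reflect and spoil the outgoing microlocalization you need for the products $A_\bullet A_\bullet$ to vanish. Second, for the nonzero Fourier modes in the cusp the paper does not use Bessel asymptotics directly for the upper bound; instead it rescales $\tilde r=r/\log(1/\alpha)$, $\tilde h=h/\log(1/\alpha)$ so that the family of one-dimensional operators becomes uniformly nontrapping in the new semiclassical parameter $\tilde h$, and then runs a standard escape-function argument. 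Your Bessel approach could in principle yield the same $|\sigma|^{-1+M_0|\im\sigma|}$ bound (the paper uses Bessel functions for the matching \emph{lower} bound), but you would have to carry out the uniform turning-point analysis you flag as the ``main obstacle,'' whereas the rescaling avoids it.
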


\begin{figure}[htbp]
\includegraphics[width=15cm]{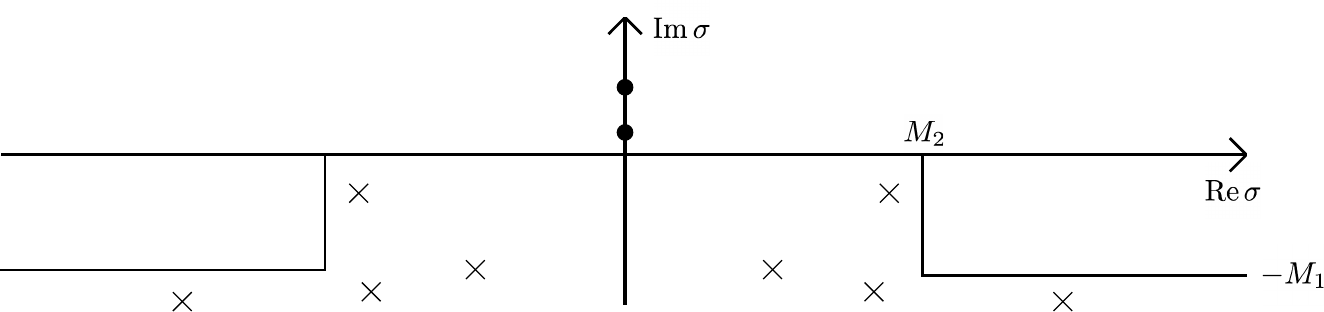}
\caption{
We prove that the cutoff resolvent continues holomorphically to arbitrarily wide strips and obeys polynomial bounds. 
}\label{f:intro}
\end{figure}

In the example above, and in many of the examples in \S\ref{s:examples}, $\chi (\Delta - n^2/4 - \sigma^2)^{-1} \chi$ is meromorphic in $\C$. The poles of the meromorphic continuation  
are called \emph{resonances}.

Logarithmically large resonance free regions go back to work of Regge \cite{regge} on potential scattering. In the setting of obstacle scattering they were found by Lax-Phillips \cite{lp} and Vainberg \cite{v}, and their results were generalized by Morawetz-Ralston-Strauss \cite{mrs} and Melrose-Sj\"ostrand \cite{ms}. When $X$ is Euclidean outside of a compact set, they have been established for very general nontrapping perturbations of the Laplacian by Sj\"ostrand-Zworski in \cite[Theorem 1]{sz}, which extends earlier work of Martinez \cite{m} and Sj\"ostrand \cite{s}. Most recently, Baskin-Wunsch \cite{bw} derive them for geometrically nontrapping manifolds with cone points. These works give a larger resonance free region and a stronger resolvent estimate than the Theorem above, but require asymptotically Euclidean geometry near infinity.

The manifolds considered in this paper are  nontrapping, but the cusp makes them not uniformly so: for a sufficiently large compact set $K \subset X$, we have 
\[\sup_{\gamma \in \Gamma} \textrm{ diam }\gamma^{-1}(K) = +\infty,\]
where $\Gamma$ is the set of unit speed geodesics in $X$. This is because geodesics may travel arbitrarily far into the cusp before escaping down the funnel; this dynamical peculiarity makes it difficult to separate the analysis in the cusp from the analysis in the funnel and is the reason for the relatively involved resolvent estimate gluing procedure  we use below.

Resonance free strips also exist in some trapping situations, with width determined by dynamical properties of the trapped set.  These go back to work of Ikawa \cite{ik}, with recent progress  by Nonnenmacher-Zworski \cite{nz}, Petkov-Stoyanov \cite{ps}, Alexandrova-Tamura \cite{at}, and Wunsch-Zworski \cite{wz}. 
Resonance free regions and resolvent estimates have  applications to evolution equations, and this is an active area: examples include resonant wave expansions and  wave decay, local smoothing estimates,  Strichartz  estimates, geometric control, and wave damping \cite{bur:smoothing, bz, bh, msv, gn, chr:mrl,  bgh,  Dyatlov:Asymptotic, csvw}; see also \cite{wsur} for a recent survey and more references.  In \S\ref{s:applications} we apply  \eqref{logreg} to local smoothing  and resonant wave expansions.

If  $(X,g)$ is evenly asymptotically hyperbolic (in the sense of Mazzeo-Melrose \cite{m} and Guillarmou \cite{g}) and nontrapping, then for any $M_1>0$ there is $M_2>0$ such that
\begin{equation}\label{e:eucbetter} 
\|\chi(\Delta - n^2/4 - \sigma^2)^{-1}\chi\|_{L^2(X) \to L^2(X)} \le M_2|\sigma|^{-1}, \quad |\re \sigma| \ge M_2,\, \im \sigma \ge - M_1,
\end{equation}
by work of Vasy \cite[(1.1)]{Vasy} (see also the analogous estimate for asymptotically Euclidean spaces in Sj\"ostrand-Zworski \cite[Theorem $1'$]{sz}). The bound \eqref{logreg} is weaker due to the presence of a cusp. Indeed, by studying low angular frequencies (which correspond to geodesics which travel far into the cusp before escaping down the funnel) in Proposition \ref{p:bessel} we show that if $(X,g) = \langle z \mapsto z+1 \rangle\backslash \Hh^2$, then
\begin{equation}\label{e:low}
\|\chi(\Delta - n^2/4 - \sigma^2)^{-1}\chi\|_{L^2(X) \to L^2(X)} \ge e^{-C|\im\sigma|}|\sigma|^{-1+2|\im \sigma|}/C,
\end{equation}
for $\sigma$ in the lower half plane and bounded away from the real and imaginary axes. 

The lower bound \eqref{e:low} gives a sense in which \eqref{logreg} is optimal, but  finding the maximal resonance free region remains an open problem. The only known explicit example of this type is $(X,g) = \langle z \mapsto z+1 \rangle \backslash \Hh^2$, for which Borthwick \cite[\S5.3]{b} expresses the resolvent in terms of Bessel functions and shows  
 there is only one resonance and it is simple (see also Proposition \ref{p:bessel}). On the other hand, Guillop\'e-Zworski \cite{gz} study more general surfaces, and prove that if the $0$-volume is not zero, then there are infinitely many resonances and optimal lower and upper bounds hold on their number in disks. We apply their result to our setting in \S\ref{s:examples}, giving a family of surfaces  with infinitely many resonances to which our Theorem applies, but it is not clear even in this case whether or not  the resonance free region given by the Theorem is optimal. The model resolvent bound \eqref{e:modelbound2b} below suggests that, if $(X,g)$ is a surface of revolution, then the methods of \S\ref{s:modelcusp} and \S\ref{s:funnel}, suitably elaborated, will allow one to replace the region $\{|\re \sigma| \ge M_2,\, \im \sigma \ge - M_1\}$  in the Theorem
by the more natural \ $\{|\re \sigma| \ge M_2,\, \im \sigma \ge - M_1 \log\log|\re\sigma|\}$.

In \cite[Corollary 1.2]{cv}, Cardoso-Vodev, extending work of Burq \cite{bu0, bu}, prove resolvent estimates for very general infinite volume manifolds (including the ones studied here; note that the presence of a funnel implies that the volume is infinite) which imply an exponentially small resonance free region. Our Theorem gives the first large resonance free region for a family of manifolds with cusps. 

 For $\im \sigma=0$, \eqref{logreg} is lossless; that is to say it agrees with the result for general nontrapping operators on asymptotically Euclidean or hyperbolic manifolds (see Cardoso-Popov-Vodev \cite[(1.6)]{cpv} and references therein).  However, if $(X,g)$ is asymptotically Euclidean or hyperbolic in the sense of \cite[\S 4]{Datchev-Vasy:Gluing}, then the gluing methods of that paper show that such a lossless estimate for $\im \sigma=0$ implies \eqref{e:eucbetter} for some $M_1>0$; see \cite{d-exten}. In this sense it is due to the cusp that $\Oh(|\sigma|^{-1})$ bounds hold for $\im \sigma=0$ but not in any strip containing the real axis.

The Theorem also provides a first step in support of the following

\begin{conj}[Fractal Weyl upper bound] Let $\Gamma$ be a geometrically finite discrete group of isometries of $\Hh^{n+1}$ such that $X = \Gamma \backslash \Hh^{n+1}$ is a smooth noncompact manifold. Let $R(X)$ denote the set of eigenvalues and resonances of $X$ included according to multiplicity, let $K \subset T^*X$ be the set of maximally extended, unit speed geodesics which are precompact, and let $m$ be the Hausdorff dimension of $K$. Then for any $C_0>0$ there is $C_1 > 0$ such that
\[\#\{\sigma \in R(X)\colon |\sigma - r| \le C_0\} \le C_1 r^{(m-1)/2}.\]
\end{conj}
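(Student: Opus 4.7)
The plan is to adapt the Sjöstrand--Zworski fractal Weyl strategy \cite{sz} (together with its refinements by Nonnenmacher--Sjöstrand--Zworski) to the geometrically finite hyperbolic setting. Semiclassically rescale with $h = 1/r$ so that the conjectured bound $C_1 r^{(m-1)/2}$ becomes a count of semiclassical resonances of $h^2 \Delta - h^2 n^2/4 - E$ in an $O(h)$-box around $E = 1$. For convex co-compact $\Gamma$, Sullivan's formula gives $\dim_H K = 2\delta + 1$, where $\delta = \dim_H \Lambda(\Gamma)$ is the limit set dimension, so $(m-1)/2 = \delta$ recovers Zworski's $r^\delta$ bound; the same identification should hold in the geometrically finite case, with the parabolic fixed points contributing only a measure-zero piece to $K$.

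First I would construct a complex-scaled model $P_\theta(h)$ by combining exterior complex scaling in the funnel ends (using the standard hyperbolic boundary-defining function, as in Mazzeo--Vasy) with a fibered version in each cusp end. In a cusp $[1,\infty)_y \times T^n_x$ with metric $dy^2/y^2 + y^{-2}\,dx^2$, I would Fourier-decompose in $x$ and complex-scale $y$ mode by mode, treating the zero mode via the semiclassical Bessel analysis of Proposition \ref{p:bessel}. Eigenvalues of $P_\theta(h)$ in an $O(h)$-neighborhood of $E = 1$ then coincide with resonances of $\Delta$ in the original unit box around $r$, reducing the conjecture to a semiclassical eigenvalue count.

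Next, using propagation estimates from the proof of the Theorem together with an escape function constructed from the geodesic flow on $K$, I would set up a Grushin problem
\begin{equation*}
\mathcal{P}(z) = \begin{pmatrix} P_\theta(h) - z & R_- \\ R_+ & 0 \end{pmatrix},
\end{equation*}
where $R_\pm$ are built from a microlocal basis supported in an $h^{1/2}$-neighborhood of $K \cap \{E = 1\}$. Covering a Poincar\'e section of $K$, which is $(m-1)$-dimensional, by $h^{1/2}$-cubes produces at most $N(h) \le C h^{-(m-1)/2}$ basis elements. Resonances in the box correspond to zeros of the holomorphic function $\det E_{-+}(z,h)$, where $E_{-+}$ is the Schur complement of $\mathcal{P}(z)$. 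A Jensen-type estimate bounding $\log|\det E_{-+}|$ by $C N(h)$ on a slightly larger disk then yields the required count $C_1 r^{(m-1)/2}$.

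The main obstacle will be implementing this framework in the presence of cusps: unlike the convex co-compact case, $K$ is noncompact in $T^*X$ because trapped geodesics can approach parabolic fixed points and venture arbitrarily deep into a cusp before returning, so the microlocal covering must be weighted to account for such excursions. The lossy lower bound \eqref{e:low} shows that the zero cusp modes genuinely contribute to the resolvent behavior, and one must verify that their contribution does not exceed $O(r^{(m-1)/2})$. I would attempt this by a separate one-dimensional Weyl count for the zero-mode problem via a WKB analysis of the semiclassical Bessel equation, with the goal of showing that each cusp produces at most $O(1)$ resonances per unit box -- a subleading contribution whenever $m > 1$, which is the generic case covered by the Guillop\'e--Zworski lower bounds cited in \S\ref{s:examples}.
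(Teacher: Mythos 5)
The statement you are attempting to prove is presented in the paper as a \emph{Conjecture}, not a theorem: the author writes that the Theorem ``provides a first step in support of'' it, and that the methods of the paper combined with those of Sj\"ostrand--Zworski and Datchev--Dyatlov ``provide a possible approach'' to the remaining $n=1$ case. The paper contains no proof of this conjecture, so there is no argument of the paper's to compare yours against; it remains an open problem.

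What you have written is a plausible research outline along the lines the paper itself gestures toward -- semiclassical rescaling $h = 1/r$, complex scaling adapted to the funnel and cusp ends, a Grushin problem with $R_\pm$ microlocalized near $K$, and a Jensen count on $\det E_{-+}$ -- but it is not a proof, and the crucial steps are precisely the ones left unresolved. You identify the central obstruction yourself: the trapped set $K$ is noncompact in $T^*X$ when cusps are present (trajectories can travel arbitrarily far into a cusp before returning), so the $h^{1/2}$-cube cover of a Poincar\'e section yielding $N(h) \le C h^{-(m-1)/2}$ in the convex co-compact case does not directly apply. Saying the cover ``must be weighted to account for such excursions'' names the difficulty rather than solving it, and overcoming it is exactly the content of the conjecture. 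The secondary claim that each cusp contributes at most $O(1)$ resonances per unit box near the real axis is asserted, not demonstrated; making it precise would require a Bessel/WKB analysis uniform in the angular frequency $\alpha$, which is where the rescaling loss manifest in \eqref{e:low} and the delicate treatment of \S\ref{s:modelcusp} already enter -- and the relevant resolvent bounds degrade as $\alpha \to 0$. Finally, your outline does not address the $n\ge 2$ case, where cusps can have mixed rank and, as the paper notes, even meromorphic continuation of the resolvent is a recent result of Guillarmou--Mazzeo; there the very definition of $R(X)$ with multiplicities, which your Grushin/Jensen scheme takes as input, already demands substantial work.
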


This statement is a partial generalization to the case of resonances of the Weyl asymptotic for eigenvalues of a compact manifold; such results go back to work of Sj\"ostrand \cite{s}. If $\Gamma\backslash \Hh^{n+1}$ has funnels but no cusps, this is proved in joint work with Dyatlov \cite{dd} (generalizing earlier results of Zworski \cite{z} and Guillop\'e-Lin-Zworski \cite{glz}); if $X = \Gamma \backslash \Hh^2$ has cusps but no funnels, this follows from work of Selberg \cite{sel}. When $n=1$ the remaining case is  $\Gamma\backslash \Hh^2$ having both cusps and funnels. The methods of the present paper, combined with those of \cite{sz, dd}, provide a possible approach to the conjecture in this case. When $n \ge 2$ cusps can have mixed rank, and in this case even meromorphic continuation of the resolvent was proved only recently by Guillarmou-Mazzeo \cite{gm}.


In \S\ref{s:prelim} we give the general assumptions on $(X,g)$ under which the Theorem holds, and deduce  consequences for the geodesic flow and for the spectrum of the Laplacian. We then give examples of manifolds which satisfy the assumptions, including examples with infinitely many resonances and examples with eigenvalue.

In \S\ref{s:reduce} we use a resolvent gluing method, based on one developed in joint work with Vasy \cite{Datchev-Vasy:Gluing}, to reduce the Theorem to proving resolvent estimates and propagation of singularities results for three model operators. The first  model operator  is semiclassically elliptic outside of a compact set, and we analyze it in \S\ref{s:rk} following \cite{sz} and \cite{Datchev-Vasy:Gluing}.

In \S\ref{s:modelcusp} we study the second model operator, the model in the cusp. We use a separation of variables, a semiclassically singular rescaling, and an elliptic variant of the gluing method of \S\ref{s:reduce} to reduce its study to that of a family of one-dimensional Schr\"odinger operators for which uniform resolvent estimates and propagation of singularities results hold. 
The  rescaling causes losses for the resolvent estimate on the real axis, and we remove these by a non-compact variant of the method of propagation of singlarities through trapped sets developed in joint work with Vasy \cite{Datchev-Vasy:Propagation}. The lower bound \eqref{e:low} shows that these losses cannot  be removed for the continued resolvent; see also Bony-Petkov \cite{bp} for related and more general lower bounds in Euclidean scattering.

In \S\ref{s:funnel} we study the third model operator, the model in the funnel, and we again reduce to a  family of one-dimensional Schr\"odinger operators. To obtain uniform estimates we use a variant of the method of complex scaling of Aguilar-Combes \cite{ac} and Simon \cite{sim}, following the geometric approach of Sj\"ostrand-Zworski \cite{sz2}. The method of complex scaling was first adapted to such families of operators by Zworski \cite{z}, but we use here the approach of \cite{Datchev:Thesis}, which is slightly simpler and is adapted to non-analytic manifolds. The analysis in this section could be replaced by that of \cite{Vasy}, which avoids separating variables; the advantage of our approach is that it gives an estimate in a logarithmically large neighborhood of the real axis. Although we do not exploit this here, as mentioned above this improvement can probably be used to show that a larger resonance free region exists, at least when $(X,g)$ is a surface of revolution.


In \S\ref{s:applications} we apply \eqref{logreg} to local smoothing and resonant wave expansions. For the latter  we need the additional assumption, satisfied in the example above and in many of the examples in \S\ref{s:examples}, that $\chi (\Delta - n^2/4 - \sigma^2)^{-1} \chi$ is meromorphic in $\C$. 
In \S\ref{s:low} we prove \eqref{e:low} using Bessel function asymptotics.

I am indebted especially to Maciej Zworski for his generous guidance, advice, and unflagging encouragement throughout the course of this project. Thanks also to Andr\'as Vasy, Nicolas Burq, John Lott, David Borthwick, Colin Guillarmou,  Hamid Hezari,  Semyon Dyatlov, and Richard Melrose for their interest and for their many very helpful ideas, comments, and suggestions. 
 I am also grateful for the hospitality of the Mathematical Sciences Research Institute and of the Universit\'e Paris 13.
 I was partially supported by the National Science Foundation under grant DMS-0654436 and under a postdoctoral fellowship.

\section{Preliminaries}\label{s:prelim}

Throughout the paper $C>0$ is a large constant which may change from line to line, and estimates are always uniform for $h \in (0,h_0]$, where $h_0>0$  may change from line to line.

\subsection{Assumptions}\label{s:assumptions}
Let $S$ be a compact $n$ dimensional boundaryless manifold, and let
\[X = \R_r \times S.\]
Let $R_g > 0$, and let $g$ be a Riemannian metric on $X$ such that 
\begin{equation}\label{e:metricinfinity}
g|_{\{\pm r > R_g\}} = dr^2 + e^{2(r + \beta(r))}dS_\pm,
\end{equation}
where $dS_+$ and $dS_-$ are metrics on $S$, $R_g>0$ and $\beta \in C^\infty(\R)$. We call the region $\{r < -R_g\}$ the \textit{cusp}, and the region $\{r > R_g\}$ the \textit{funnel}. 

\begin{figure}[htbp]
\includegraphics[width=140mm]{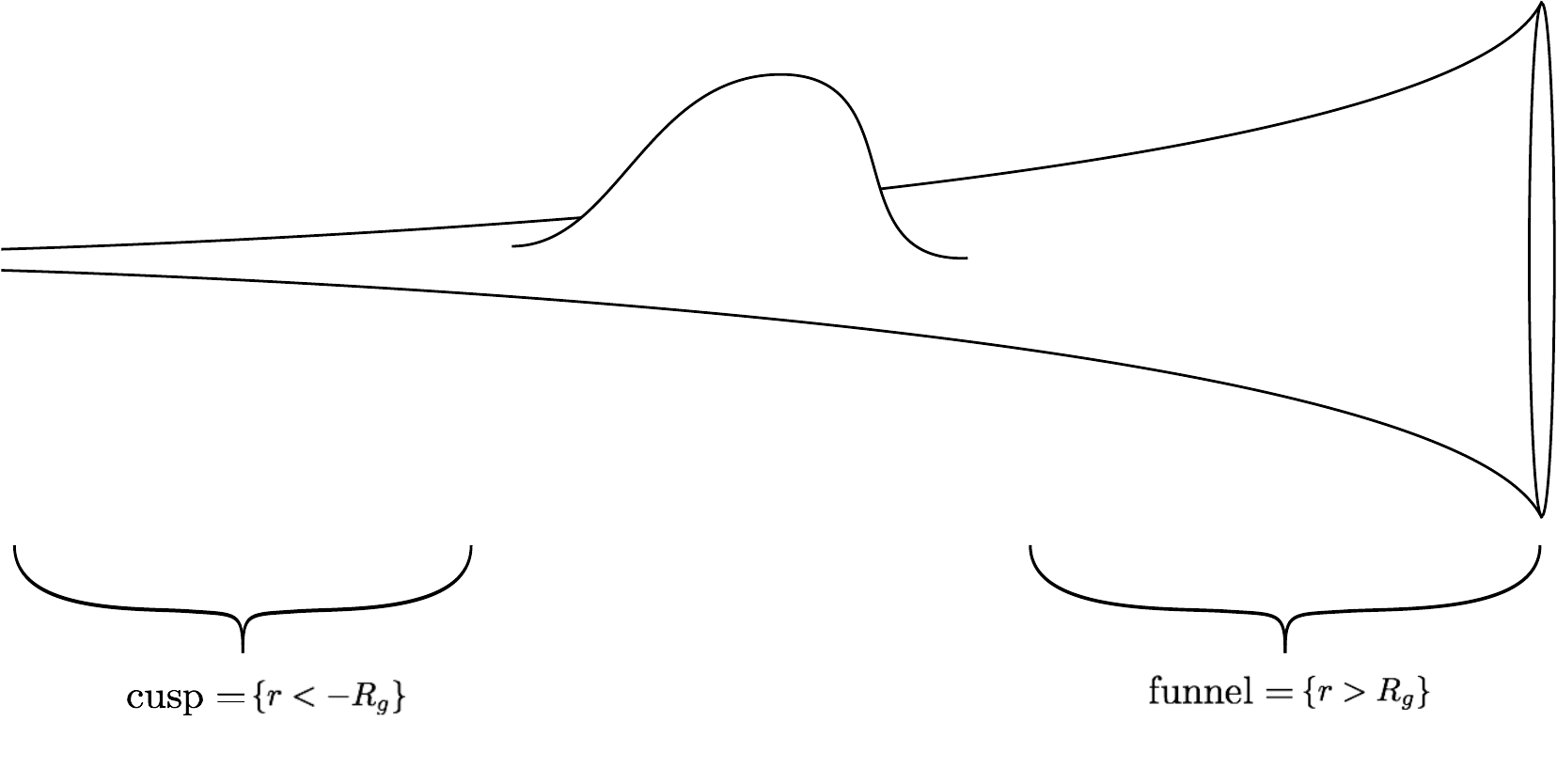}
\caption{The manifold $X$.}\label{f:mfld}
\end{figure}

Suppose there is $\theta_0 \in (0,\pi/4)$ such that $\beta$ is holomorphic and bounded in the sectors $|z| > R_g,\ \min\{|\arg z|,\, |\arg -z|\} < 2\theta_0$. By Cauchy estimates,  for all $k \in \N$ there are $C, C_k >0$,  such that if $|z| > R_g,\ \min\{|\arg z|,\, |\arg -z|\} \le \theta_0$, then 
\[
|\beta^{(k)}(z)| \le C_k |z|^{-k}, \ |\im \beta(z)| \le C |\im z|/|z|.
\]
In particular, after possibly redefining $R_g$ to be larger, we may assume without loss of generality that, for all $r \in \R$,
\begin{equation}\label{e:betahalf}
|\beta'(r)| + |\beta''(r)| \le 1/4.
\end{equation}

In the example at the beginning of the paper $\beta \equiv 0$. When the funnel end is an exact hyperbolic funnel, $\beta(r) =C +  \log(1 + e^{-2r})$ for $r> R_g$.

We make two dynamical assumptions: if $\gamma \colon \R \to X$ is a maximally extended geodesic, assume $\gamma(\R)$ is not bounded and  $\gamma^{-1}(\{r < -R_g\})$ is connected. See \S\ref{s:examples} for examples.


\subsection{Dynamics near infinity}
Let $p+1$ be the geodesic Hamiltonian, that is
\[
p = \rho^2 + e^{-2(r + \beta(r))}\sigma_\pm - 1,
\]
in the region $\{\pm r > R_g\}$, where $\rho$ is dual to $r$, and $\sigma_\pm$ is the geodesic hamiltonian of $(S,dS_\pm)$. From this we conclude that, along geodesic flowlines, we have
\[
\dot r(t) = H_p\rho = 2\rho(t), \qquad
\dot\rho(t) = -H_p r = 2 \left[1 + \beta'(r(t))\right] e^{-2(r + \beta(r))}\sigma_\pm,
\]
so long as the trajectory remains within $\{\pm r > R_g\}$. In particular,
\begin{equation}\label{e:convexity}
\ddot r(t) = 4\left[1 + \beta'(r(t))\right] e^{-2(r + \beta(r))} \sigma_\pm \ge 0.
\end{equation}
Dividing the equation for $\dot \rho$ by $p + 1 - \rho^2$, putting $\hat \rho = \rho/\sqrt{p+1}$, and integrating we find
\begin{equation}\label{tanh}\begin{split}
\tanh^{-1} \hat\rho(t) -  \tanh^{-1}\hat \rho(0) &= 2 \sqrt {p+1} \left(t + \int_0^t\beta'(r(s))ds\right)\\
& \ge \frac 34\ \frac{r(t) - r(0)}{\max\{\hat \rho(s): s \in [0,t]\}} ,
\end{split}\end{equation}
where the equality holds so long as the trajectory remains in $\{\pm r > R_g\}$, and the inequality (which follows from \eqref{e:betahalf} and the equation for $\dot r $) holds when additionally $t \ge 0$, $\rho(0) \ge 0$.

\subsection{The essential spectrum.}\label{spectrum}
The nonnegative Laplacian is given by
\begin{align*} \Delta|_{\{\pm r >R_g\}}  
&= D_r^2 - i n(1 + \beta'(r))D_r + e^{-2(r + \beta(r))}  \Delta_{S_\pm}, \end{align*}
where $D_r = -i\D_r$, and $\Delta_{S_\pm}$ is the Laplacian on $(S,dS_{\pm})$. 
Fix $\varphi\in C^\infty(X)$ such that
\begin{equation}\label{e:phi} \varphi|_{\{|r|>R_g\}} = n(r + \beta(r))/2.\end{equation}
Then 
\begin{equation}\label{e:vdef}
\begin{split}
\left.\left(e^{\varphi} \Delta e^{-\varphi}\right)\right|_{\{\pm r>R_g\}}
&= D_r^2 + e^{-2(r + \beta(r))} \Delta _{S_\pm} + \frac {n^2} 4 + V(r),
\end{split}
\end{equation}
where $V(r) = \varphi'' + {\varphi'}^2  - \frac{n^2}4  =   \frac n 2 \beta'' + \frac{n^2}2 \beta'  + \frac{n^2}4 {\beta'}^2 .$ This shows  the essential spectrum of $\Delta$ is $[n^2/4 ,\infty)$ 
(see for example \cite[Theorem XIII.14, Corollary 3]{rs}); the potential perturbation $V$ is relatively compact since $\beta'$ and $\beta''$ tend to zero at infinity (see for example Rellich's criterion \cite[Theorem XII.65]{rs}).

In this paper we study:
\begin{equation}\label{e:pdef}
P \Def h^2\left(e^{\varphi} \Delta e^{-\varphi} - \frac{n^2}4\right) - 1,
\end{equation}
as an unbounded operator on $L^2_\varphi(X) \Def \{e^\varphi u\colon u \in L^2(X)\}$ with domain \[H^2_\varphi(X) \Def \{u \in L_\varphi^2(X)\colon e^{\varphi} \Delta e^{-\varphi} u \in L_\varphi^2(X)\} = \{e^\varphi u\colon u \in H^2(X)\}.\]
We will show that for every $\chi \in C_0^\infty(X)$,  $E \in (0,1)$ there exists $C_0 > 0$ such that for every $\Gamma >0$ there exist $C,h_0>0$ such that the cutoff resolvent $\chi(P-\lambda)^{-1}\chi$ continues holomorphically from $\{\im \lambda >0\}$ to $[-E,E] - i [0,\Gamma h]$ and satisfies
\begin{equation}\label{e:main}
\|\chi(P - \lambda)^{-1}\chi\|_{L_\varphi^2(X) \to L_\varphi^2(X)} \le C h^{-1-C_0|\im\lambda|/h},
\end{equation}
uniformly for $\lambda \in [-E,E] - i [0,\Gamma h]$ and $h \in (0,h_0]$. This implies the Theorem and \eqref{logreg}.

\subsection{Examples}\label{s:examples}

In this section we give a family of examples of manifolds satisfying the assumptions of \S\ref{s:assumptions}. I am very grateful to John Lott for suggesting this family of examples. In this section $d_g(p,q)$ denotes the distance between $p$ and $q$ with respect to the Riemannian metric $g$, and $L_g(c)$ denotes the length of a curve $c$ with respect to $g$.

Let $(\Hh^{n+1},g_h)$ be hyperbolic space with coordinates
\[
(r,y) \in \R \times \R^n, \qquad g_h = dr^2 + e^{2r} dy^2.
\]
Let $(X, g_h)$ be a parabolic cylinder obtained by quotienting the $y$ variables to a torus:
\[
X = \R \times \left(\la y \mapsto y + c_1, \dots, y \mapsto y + c_n \ra \backslash \R^n\right),
\]
where the $c_j$ are linearly independent vectors in $\R^n $. Let   $R_g> 0$, put $dS_+ = dS_- = dy^2$, and take $\beta \in C^\infty(\R)$ satisfying all assumptions of \S\ref{s:assumptions}, including \eqref{e:betahalf}. On $\{|r| > R_g\}$ define $g$ by \eqref{e:metricinfinity}, and on $\{|r| \le R_g\}$   let $g$ be any metric with all sectional curvatures nonpositive. The calculation in the Appendix shows that the sectional curvatures  in $\{|r| > R_g\}$ are nonpositive so long as \eqref{e:betahalf} holds.

The two dynamical assumptions in the last paragraph of \S\ref{s:assumptions} will follow from the following classical theorem (see for example \cite[Theorem III.H.1.7]{brha}).

\begin{prop}[Stability of quasi-geodesics]
Let $(\Hh^{n+1},g_h)$ be hyperbolic $n+1$-space, let $p,q \in \Hh^{n+1}$, and let $\gamma_h\colon[t_1,t_2] \to \Hh^{n+1}$ be the unit speed geodesic from $p$ to $q$. Suppose $c\colon[t_1,t_2]\to\Hh^{n+1}$ satisfies $c(t_1) = p$, $c(t_2) = q$, and there is $C_1>0$ such that
\begin{equation}\label{quasi}\frac 1 {C_1} |t-t'|\le d_{g_h}(c(t),c(t')) \le C_1 |t-t'|,\end{equation}
for all $t,t'\in[t_1,t_2]$. Then
\begin{equation}\label{conquasi}\max_{t \in [t_1,t_2]}d_{g_h}(\gamma_h(t),c(t)) \le C_2,\end{equation}
where $C_2$ depends only on $C_1$.
\end{prop}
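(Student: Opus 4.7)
The plan is to exploit the fact that $\Hh^{n+1}$ is a Gromov $\delta$-hyperbolic space (with $\delta$ depending only on the dimension) and the associated exponential divergence of geodesics, in order to obtain a contradiction if one assumes the conclusion fails with $C_2$ much larger than $C_1$. This is the classical Morse lemma of coarse geometry; \cite[Theorem III.H.1.7]{brha} is the canonical reference, so I merely sketch the main ideas.

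First I would reduce to the case of a continuous quasi-geodesic. Replace $c$ by its piecewise-geodesic approximation $\tilde c$ obtained by linearly interpolating between the values $c(t_1 + k)$ for integers $k$ with $t_1+k\in[t_1,t_2]$ (including the two endpoints). A direct check using \eqref{quasi} shows that $\tilde c$ satisfies a quasi-geodesic bound with constants depending only on $C_1$, and that $\sup_t d_{g_h}(\tilde c(t),c(t)) \le C_1$; it therefore suffices to bound $\max_t d_{g_h}(\gamma_h(t),\tilde c)$.

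Next, I would argue by contradiction. Let $D := \max_s d_{g_h}(\gamma_h, \tilde c(s))$, and pick $s^*$ (approximately) realizing this maximum. Choose a maximal subinterval $[s^-,s^+]\ni s^*$ on which $\tilde c$ stays outside the $D/2$-neighborhood of $\gamma_h$, and let $\gamma_h(\tau^\pm)$ denote nearest-point projections of $\tilde c(s^\pm)$ to $\gamma_h$. Two competing estimates are then compared. The bi-Lipschitz property together with the triangle inequality gives $s^+ - s^- \le C_1' (D + |\tau^+ - \tau^-|)$, so that the length of $\tilde c|_{[s^-,s^+]}$ is bounded above by a linear function of $D + |\tau^+-\tau^-|$. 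On the other hand, the exponential blow-up of the hyperbolic metric transverse to $\gamma_h$---concretely, that a curve $\alpha$ lying at distance at least $r$ from $\gamma_h$ projects onto a sub-segment of $\gamma_h$ of length at most $L_{g_h}(\alpha)/\cosh r$---forces the length of $\tilde c|_{[s^-,s^+]}$ to be at least $|\tau^+-\tau^-|\cosh(D/2) - O(D)$. For $D$ large the exponential beats the linear, giving the desired bound $D \le C_2(C_1)$.

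The main obstacle will be purely bookkeeping: arranging the constants so that the final bound truly depends only on $C_1$ (and not on $t_2 - t_1$ or on any other parameter of the problem), and dealing with boundary cases in which the exit times $s^\pm$ coincide with $t_1$ or $t_2$---the latter easily handled by observing that $d_{g_h}(\tilde c(t_i),\gamma_h(t_i))$ is either $0$ or $O(C_1)$ by construction. Since the paper uses the statement only as a black box, in practice it suffices simply to quote \cite[Theorem III.H.1.7]{brha}, as is done in the excerpt.
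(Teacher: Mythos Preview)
The paper gives no proof of this proposition; it simply cites it as a classical result from \cite[Theorem III.H.1.7]{brha} and uses it as a black box, exactly as you note in your final paragraph. Your sketch is the standard Morse--lemma argument from that reference, and is correct in outline.

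One small point worth flagging: the version stated in the paper asks for a bound on $d_{g_h}(\gamma_h(t),c(t))$ at the \emph{same} parameter value $t$, whereas your sketch (and the usual formulation in \cite{brha}) controls the Hausdorff distance between the images. The passage from one to the other requires a short additional step: once you know $c(t)$ is within $C_2'$ of some $\gamma_h(t')$, the quasi-geodesic inequality applied to $c$ on $[t_1,t]$ and the unit-speed property of $\gamma_h$ together force $|t-t'|$ to be bounded in terms of $C_1$ and $C_2'$. This is routine, but strictly speaking it is not covered by the exponential-divergence argument you outline.
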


To apply this theorem, observe first that just as $g_h$ descends to a metric on $X$, so $g$ lifts to a metric on $\Hh^{n+1}$; call the lifted metric $g$ as well. Observe there is $C_g$ such that
\begin{equation}\label{methyp}\frac 1{C_g} g_h(u,u) \le g(u,u) \le C_g g_h(u,u), \qquad u \in T_x X, \ x \in X.\end{equation}
Indeed for $x$ varying in a compact set this is true for any pair of metrics, and on $\{|r|>R_g\}$ it suffices if $C_g \ge e^{2\max|\beta|}$. We will show that if $c$ is a unit speed $g$-geodesic in $\Hh^n$, then \eqref{quasi} holds with a constant $C_1$ depending only on $C_g$. Since both $g$ and $g_h$ have nonnegative curvature and hence  distance-minimizing geodesics, it is equivalent to show that
\begin{equation}\label{quasi2}\frac 1 {C_1} d_g(p,q)  \le d_{g_h}(p,q) \le {C_1} d_g(p,q),\end{equation}
holds for all $p,q \in \Hh^{n+1}$, with a constant $C_1$ which depends only on $C_g$. For this last we compute as follows: let $\gamma$ be a unit speed $g$-geodesic from $p$ to $q$. Then
\[\begin{split}d_{g_h}(p,q) \le L_{g_h}(\gamma) = \int_{t_1}^{t_2} \sqrt{g_h(\dot\gamma,\dot\gamma)}dt \le\int_{t_1}^{t_2} \sqrt{C_g g(\dot\gamma,\dot\gamma)}dt 
= \sqrt{C_g}L_g(\gamma)=\sqrt{C_g}d_g(p,q). \end{split}\]
This proves the second inequality of \eqref{quasi2}, and the first follows from the same calculation since  \eqref{methyp} is unchanged if we switch $g$ and $g_h$.

Let $\gamma \colon \R \to X$ be a  $g$-geodesic and $\gamma_h \colon \R \to X$ a $g_h$-geodesic. For any $x \in X$ we have
\[\lim_{t\to\infty}d_{g_h}(\gamma_h(t),x) = \lim_{t\to\infty}d_{g}(\gamma_h(t),x) = \infty,\]
and by \eqref{conquasi} the same holds if $\gamma_h$ is replaced by $\gamma$. In particular $\gamma(\R)$ is not bounded.

We check finally that $\gamma^{-1}(\{r < -R_g\})$ is connected. It suffices to check that if instead $\gamma\colon \R \to \Hh^{n+1}$ is  a $g$-geodesic, then $\gamma^{-1}(\{r < -N\})$ is connected  for $N$  large enough. We then conclude by redefining  $R_g$ to be larger than $N$.

We argue by way of contradiction. From \eqref{e:convexity} we see that $\dot r(t)$ is nondecreasing along $\gamma$ in $\{r < -R_g\}$. Hence, if $\gamma^{-1}(\{r < - N\})$ is to contain at least two intervals for some $N> R_g$, there must exist times $t_1<t_2<t_3$ such that $r(\gamma(t_1)), r(\gamma(t_3)) < -  N$, $r(\gamma(t_2)) = -R_g$. Now the $g_h$-geodesic $\gamma_h\colon[t_1,t_3] \to \Hh^n$ joining $\gamma(t_1)$ to $\gamma(t_3)$ has $r(\gamma_h(t)) < -N$ for all $t \in [t_1,t_3]$. It follows that $d_{g_h}(\gamma_h(t_2),\gamma(t_2)) \ge N -R_g$, and if $N$ is large enough this violates \eqref{conquasi}.

\subsubsection{Examples with infinitely many resonances.}\label{infmany} In this subsection we specialize to the case $n=1$, $\beta(r) = 0$ for $r < -R_g$, $\beta(r) = \beta_0 + \log(1 + e^{-2r})$ for $r > R_g$ and for some $\beta_0 \in \R$. Then the cusp and funnel of $X$ are isometric to the standard cusp and funnel obtained by quotienting $\Hh^2$ by a nonelementary Fuchsian subgroup (see e.g. \cite[\S2.4]{b}).

In particular there is $\ell >0$ such that
\[X = \R_r \times  (\R/\ell\Z)_t, \qquad g|_{\{r > R_g\}} = dr^2 + \cosh^2r dt^2.\]
If $(X_0, g_0) = [0,\infty) \times (\R/\ell\Z), \ g_0 =  dr^2 + \cosh^2r dt^2,$ then 
the $0$-volume of $X$ is
\[0\,\textrm{-}\vol(X) \Def \vol_g(X \cap \{r < R_g\}) - \vol_{g_0}(X_0 \cap  \{r < R_g\}).\]

Let $R_\chi(\sigma)$ denote the meromorphic continuation of $\chi (\Delta - 1/4 - \sigma^2)^{-1} \chi$. In this case, $R_\chi(\sigma)$ is meromorphic in $\C$ (\cite{mm, gz}), and near each pole $\sigma_0$ we have
\[R_\chi(\sigma) = \chi \left(\sum_{j=1}^k \frac {A_j}{(\sigma - \sigma_0)^j} + A(\sigma)\right) \chi,\]
where the $A_j\colon L^2_{\textrm{comp}}(X) \to L^2_{\textrm{loc}}(X)$  are finite rank  and $A(\sigma)$ is holomorphic near $\sigma_0$. The \emph{multiplicity} of a pole, $m(\sigma_0)$ is given by $m(\sigma) \Def \rank\left(\sum_{j=1}^k A_j\right).$
\begin{prop}\cite[Theorem 1.3]{gz}
If $0$-$\vol(X) \ne 0$, then there exists a constant $C$ such that
\[\lambda^2/C \le \sum_{|\sigma|\le \lambda}m(\sigma) \le C\lambda^2, \qquad \lambda > C.\]
\end{prop}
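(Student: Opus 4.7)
The plan is to recognize this proposition as a direct specialization of \cite[Theorem 1.3]{gz}; the work is in verifying that the surface $(X,g)$ constructed in \S\ref{infmany} lies in the class of surfaces to which Guillop\'e-Zworski's theorem applies. Once the setup is matched, the conclusion follows immediately from their result.

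First I would verify that the two ends of $(X,g)$ are isometric to the standard hyperbolic cusp and funnel models used in \cite{gz}. In the cusp region $\{r < -R_g\}$, the choice $\beta \equiv 0$ gives $g = dr^2 + e^{2r}\,dt^2$ on $\R_r \times (\R/\ell\Z)_t$, which is the standard hyperbolic cusp. In the funnel region $\{r > R_g\}$, the computation
\[ e^{2(r + \beta(r))} = e^{2\beta_0}(e^r + e^{-r})^2 = 4 e^{2\beta_0}\cosh^2 r \]
shows, after absorbing $2 e^{\beta_0}$ into the cross-sectional length parameter, that the end is isometric to the standard hyperbolic funnel $dr^2 + \cosh^2 r\,dt^2$. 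The metric on the compact piece $\{|r| \le R_g\}$ is an arbitrary smooth nonpositively curved extension, but this interior choice is immaterial for the resonance counting asymptotics, which depend only on the geometry at infinity and on the $0$-volume.

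With the ends matched to the Guillop\'e-Zworski models and $0\textrm{-}\vol(X) \ne 0$ by hypothesis, I would then apply \cite[Theorem 1.3]{gz} directly to obtain the two-sided bound $\lambda^2/C \le \sum_{|\sigma| \le \lambda} m(\sigma) \le C\lambda^2$ for $\lambda > C$. The upper bound in \cite{gz} rests on bounding a regularized scattering determinant above by $\exp(C\lambda^2)$ and then applying Jensen's formula, while the lower bound uses nonvanishing of the $0$-volume to force a matching quadratic lower growth of that determinant along a ray. The main point that would require any checking, rather than a citation, is that the multiplicity $m(\sigma)$ defined here as the rank of the singular part of the Laurent expansion of $R_\chi(\sigma)$ agrees with the resonance multiplicity used in \cite{gz}; this is standard for finite-rank meromorphic operator families and requires no new argument. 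I do not anticipate a real obstacle: the entire content of the proof is the identification of ends performed above.
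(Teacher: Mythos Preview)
Your proposal is correct and matches the paper's approach: the proposition is stated as a direct citation of \cite[Theorem 1.3]{gz}, with the paper simply noting beforehand that the cusp and funnel are isometric to the standard hyperbolic models (citing \cite[\S2.4]{b}), and no further proof is given. Your verification that the ends match and that the multiplicity conventions agree is exactly the justification needed to invoke the cited theorem.
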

We can ensure that $0$-$\vol(X) \ne 0$ by adding, if necessary, a small compactly supported metric perturbation to $g$. Then, as $\lambda \to \infty$, the meromorphic continuation of $R_\chi$ will have $\sim \lambda^2$ many poles in a disk of radius $\lambda$, but none of them will be in the strips \eqref{logreg}.

\subsubsection{Examples with eigenvalue}\label{exeigensec}
In this subsection we  consider examples of the form
\begin{equation}\label{exampeigen}X = \R \times (\R^n \slash  \Z^n) \qquad g = dr^2+ \exp \left(2r + 2\int_{-\infty}^r b\right)dy^2, \qquad b \in C_0^\infty(\R).\end{equation}
By  the Appendix, $(X,g)$ is  nonpositively curved if $b' + (b + 1)^2 \ge 0$ everywhere, e.g. if $b \ge -1/2$ and $b' \ge -1/4$; then all the assumptions of \S \ref{s:assumptions} hold. We will give a sufficient condition on $b$ such that $X$ has at least one eigenvalue, and also infinitely many resonances.

By the calculation in \S\ref{spectrum}, if $\varphi(r)=- \frac n 2\left( r +  \int_{-\infty}^r b\right)$ for all $r \in \R$,
then
\[e^{-\varphi} \Delta e^{\varphi} = D_r^2 + e^{-2(r + \int^r b)} \Delta_{\R^n/\Z^n} + \frac{n^2} 4 + V(r), \quad V(r) \Def  \frac n 2 b'(r) + \frac {n^2} 4 b(r)^2 + \frac {n^2} 2 b(r).\]
Observe that $V \in C_0^\infty(\R)$, and consequently (see for example \cite[Theorem XIII.110]{rs}) for $D_r^2 + V(r)$ to have a negative eigenvalue it is sufficient to ensure that
\[\int_{-\infty}^\infty  V(r)dr <  0.\]
But in \cite[Theorem 2]{z87} Zworski shows that if $V \not\equiv 0$, the operator $D_r^2 + V(r)$ has infinitely many resonances: indeed the number in a disk of radius $\lambda$ is given by 
\[\frac 2 \pi |\chsupp V| \lambda + o(\lambda),\qquad \lambda \to \infty,\]
where $\chsupp$ denotes the convex hull of the support. This eigenvalue and these resonances correspond to an eigenvalue and resonances for $\Delta$:  one multiplies the eigenfunction and resonant states by $e^{\varphi}$ and regards them as functions on $X$ which depend on  $r$ only.

In summary if $(X,g)$ is given by \eqref{exampeigen}, then the assumptions of \S\ref{s:assumptions} hold if $b \ge -1/2$, $b' \ge -1/4$. It has infinitely many resonances and at least one eigenvalue if $b \not\equiv 0$, $b \le 0$.

\subsection{Pseudodifferential operators}\label{secpseudor}
In this section we review some facts about semiclassical pseudodifferential operators, following \cite{ds} and \cite{ez}.

\subsubsection{Pseudodifferential operators on $\R^n$} For $m \in \R$, $\delta \in [0,1/2)$ let $S_\delta^m(\R^n)$ be the symbol class of functions $a = a_h(x,\xi) \in C^\infty(T^*\R^n)$ satisfying
\begin{equation}\label{symboldef}
\left|\D^\alpha_x \D^\beta_\xi a\right| \le C_{\alpha,\beta} h^{-\delta(|\alpha| + |\beta|)} (1+|\xi|^2)^{(m-|\beta|)/2},
\end{equation}
uniformly in $T^*\R^n$. The \textit{principal symbol} of $a$ is its equivalence class  in $S_\delta^{m}(\R^n) / h S_\delta^{m-1}(\R^n)$. Let $S^m(\R^n) = S^m_0(\R^n)$.

We quantize $a \in S_\delta^m(\R^n)$ to an operator $\Op(a)$ using the formula
\begin{equation}\label{quantdef}(\Op(a) u)(x) = \frac 1 {(2\pi h)^n} \int\!\!\!\int e^{i(x-y)\cdot\xi/h}a\left(h,x,\xi\right)u(y)dyd\xi,\end{equation}
and put $\Psi_\delta^m(\R^n) = \{\Op(a)| a \in S_\delta^m(\R^n)\}$, $\Psi^m(\R^n) = \Psi^m_0(\R^n)$. If $A = \Op(a)$ then $a$ is the \textit{full symbol} of $A$, and the principal symbol of $A$ is the principal symbol of $a$.
If $A \in \Psi_\delta^m(\R^n)$, then for any $s \in \R$ we have $\|A\|_{H^{s+m}_h(\R^n) \to H^s_h(\R^n)} \le C$, where (if $\Delta \ge0$)
\[\|u\|_{H^s_h(\R^n)} = \|(1 + h^2\Delta)^{s/2}u\|_{L^2(\R^n)}.\]
If $A \in \Psi_\delta^m(\R^n)$ and $B \in \Psi_\delta^{m'}(\R^n)$, then $AB \in \Psi_\delta^{m+m'} (\R^n)$ and $[A,B] = AB - BA \in h^{1-2\delta}\Psi_\delta^{m+m'-1} (\R^n)$. If $a, b$ are  the  principal symbols of $A, B$, then the  principal symbol of $h^{2\delta-1}[A,B]$ is $i H_ba$, where $H_b$ is the Hamiltonian vector field of $b$.

If $K \subset T^*\R^n$ has either $K$ or $T^*\R^n \setminus K$ bounded in $\xi$, then $a \in S_\delta^m(\R^n)$ is \emph{elliptic} on~$K$~if
\begin{equation}\label{ellipdef}|a| \ge  (1+|\xi|^2)^{m/2}/C,\end{equation}
uniformly for $(x,\xi) \in K$. We say that $A \in \Psi_\delta^m(\R^n)$ is elliptic on $K$ if its principal symbol is. For such $K$, we say $A$ is \textit{microsupported} in $K$ if the full symbol $a$ of $A$ obeys 
\begin{equation}\label{e:microsuppdef}
|\D_x^\alpha \D^\beta_\xi a| =  C_{\alpha,\beta,N}h^N (1 + |\xi|^2)^{-N}
\end{equation}
uniformly on $T^*\R^n\setminus K$, for any $\alpha, \beta, N$. 
If $A_1$ is microsupported in $K_1$ and $A_2$ is microsupported in $K_2$, then $A_1A_2$ is microsupported in $K_1 \cap K_2$.

If $A \in \Psi^m_\delta(\R^n)$ is elliptic on $K$, then it is invertible there in the following sense: there exists $G \in \Psi^{-m}_\delta(\R^n)$ such that $AG - \Id$ and $GA - \Id$ are both microsupported in $T^*X \setminus K$. Hence if $B \in \Psi_\delta^{m'}(\R^n)$ is microsupported in $K$ and $A$ is elliptic in an $\eps$-neighborhood of $K$ for some $\eps > 0$, then, for any $s,N \in \R$.
\begin{equation}\label{ellipestrn} \|Bu\|_{H^{s+m}_h(\R^n)} \le C \|ABu\|_{H^{s}_h(\R^n)} + \Oh(h^\infty)\|u\|_{H^{-N}_h(\R^n)}.\end{equation}
The \textit{sharp G\aa rding inequality}  says that if the principal symbol of $A \in \Psi_\delta^m(\R^n)$ is nonnegative near $K$ and $B \in \Psi_\delta^{m'}(\R^n)$ is microsupported in $K$, then
\begin{equation}\label{gardingrn}\la A B u, B u \ra_{L^2(\R^n)} \ge -Ch^{1-2\delta} \|B u\|^2_{H^{(m-1)/2}(\R^n)} - \Oh(h^\infty)\|u\|_{H^{-N}_h(\R^n)}.\end{equation}

\subsubsection{Pseudodifferential operators on a manifold}\label{secpseudoman} 

These results extend to the case of a noncompact manifold $X$, provided we require our estimates to be uniform only on compact subsets of $X$. We formulate our estimates for $L^2_\varphi(X)$ and its associated Sobolev spaces, but of course this choice of density is not essential.

Write $S^m_\delta(X)$ for the symbol class of functions  $a \in C^\infty( T^*X)$ satisfying \eqref{symboldef} on coordinate patches (note that this condition is invariant under change of coordinates). The principal symbol of $a$ is its equivalence class in $S_\delta^m(X) / hS_\delta^{m-1}(X)$, and let $S^m(X) = S^m_0(X)$.

Let $h^\infty \Psi^{-\infty}(X)$ be the set of linear operators $R$ such that for any $\chi \in C_0^\infty(X)$, we have  $\|\chi R\|_{H^{-N}_{\varphi,h}(X) \to H^N_{\varphi,h}(X)} + \|R \chi \|_{H^{-N}_{\varphi,h}(X) \to H^N_{\varphi,h}(X)} \le C h^N$ for any $N$,
where  
\begin{equation}\label{e:hphidef}
\|u\|_{H^s_{\varphi,h}(X)} \Def \|(2+P)^{s/2}u\|_{L_\varphi^2(X).}
\end{equation}
We quantize $a \in S_\delta^{m}(X)$ to an operator $\Op(a)$ by using a partition of unity and the formula \eqref{quantdef} in coordinate patches. Let $\Psi_\delta^{m}(X) = \{\Op(a) + R | a \in S_\delta^m(X), R \in h^\infty\Psi^{-\infty}(X)\}$.  The quantization $\Op$ depends on the choices of coordinates and partition of unity, but the class $\Psi_\delta^{m}(X)$ does not. If $A \in \Psi_\delta^{m}(X)$ and $\chi \in C_0^\infty(X)$, then $\chi A$ and $A \chi$ are bounded $H^{s+m}_{\varphi,h}(X) \to H^{s}_{\varphi,h}(X)$.
If $A \in \Psi_\delta^{m}(X)$ and $B \in \Psi_\delta^{m'}(X)$, then $AB \in \Psi_\delta^{m+m'} (X)$ and  $h^{2\delta-1}[A,B] \in \Psi_\delta^{m+m'-1} (X)$. If $a, b$ are  the  principal symbols of $A$ and $B$ (the principal symbol is invariantly defined, although the total symbol is not), then the  principal symbol of $h^{2\delta-1}[A,B]$ is $i H_ba$, where $H_b$ is the Hamiltonian vector field of $b$. 

Let $K \subset T^*X$ have either $K \cap T^*U$ bounded for every bounded $U \subset X$, or $T^*U \setminus K$ bounded for every bounded $U \subset X$. We say $a \in S_\delta^m(X)$ is \emph{elliptic} on $K$ if \eqref{ellipdef} holds
uniformly on $T^*U \cap K$ for every bounded $U \subset X$. We say that $A \in \Psi_\delta^m(X)$ is elliptic on $K$ if its principal symbol is. We say $A$ is \textit{microsupported} in $K$ if a full symbol $a$ of $A$ obeys \eqref{e:microsuppdef}
uniformly on $T^*U \setminus K$ for every bounded $U \subset X$ and for any $\alpha, \beta, N$ (note that if this  holds for one full symbol of $A$, it also does for all the others). 

If $B \in \Psi^{m'}_\delta(X)$ is microsupported in $K$ and $A$ is elliptic in an $\eps$-neighborhood of $K$ for some $\eps > 0$, then, for any $s,N \in \R$ and $\chi \in C_0^\infty(X)$,
\begin{equation}\label{ellipestx} \|B\chi u\|_{H^{s+m}_{\varphi,h}(X)} \le C \|AB \chi u\|_{H^{s}_{\varphi,h}(X)} + \Oh(h^\infty)\|\chi u\|_{H^{-N}_{\varphi,h}(X)}.\end{equation}
The \textit{sharp G\aa rding inequality}  says that if the principal symbol of $A \in \Psi_\delta^m(X)$ is nonnegative near $K$ and $B \in \Psi_\delta^{m'}(X)$ is microsupported in $K$, then for every $\chi \in C_0^\infty(X)$, $N \in \R$,
\begin{equation}\label{gardingx}\la A B \chi u, B \chi u \ra_{L^2_{\varphi}(X)} \ge -Ch^{1-2\delta} \|B \chi u\|^2_{H^{(m-1)/2}_{\varphi,h}(X)} - \Oh(h^\infty)\|\chi u\|_{H^{-N}_{\varphi,h}(X)}.\end{equation}

\subsubsection{Exponentiation of operators}\label{expop} For $q \in C_0^\infty(T^*X)$, $Q$ a quantization of $q$, and $\eps \in[0,C_0 h\log(1/h)]$, we will be interested in operators of the form $e^{\eps Q/h}$.  We write
\[e^{\eps Q/h} = \sum_{j=0}^\infty \frac {(\eps/h)^j}{j!} Q^j,\]
with the sum converging in the $H^s_{\varphi,h}(X) \to H^s_{\varphi,h}(X)$ norm operator topology, but the convergence is not uniform as $h \to 0$.  Beals's characterization  \cite[Theorem 9.12]{ez} can be used to show that $e^{\eps Q/h} \in \Psi^{0}_\delta(X)$ for any $\delta>0$, but we will not need this. Let $s \in \R$. Then
\begin{equation}\label{e:expest}
\left\|e^{\eps Q/h}\right\| \le \sum_{j=0}^\infty \frac{(C_0\log(1/h))^j}{j!} \|Q\|^j = e^{C_0 \log(1/h)\|Q\|} = h^{-C_0 \|Q\|},
\end{equation}
where all norms are $H^s_{\varphi,h}(X) \to H^s_{\varphi,h}(X)$.

If $A \in \Psi_\delta^{m}(X)$ is bounded $H^{s+m}_{\varphi,h}(X) \to H^s_{\varphi,h}(X)$ (without needing to be multiplied by a cutoff), then, by \eqref{e:expest}, 
\begin{equation}\label{e:eepsfirst}
\|e^{\eps Q/h}A e^{-\eps Q/h}\|_{H^{s+m}_{\varphi,h}(X) \to H^s_{\varphi,h}(X)} \le C h^{-N}
\end{equation}
 for any $s \in \R$, where $N = C_0(\|Q\|_{H^{s+m}_{\varphi,h}(X) \to H^{s+m}_{\varphi,h}(X)} + \|Q\|_{H^s_{\varphi,h}(X) \to H^s_{\varphi,h}(X)})$. But, writing $\ad_Q A = [Q,A]$ and $e^{\eps Q/h}A e^{-\eps Q/h} = e^{\eps \ad_{Q}/h}A$, for any $J \in \N$ we have the Taylor expansion
\begin{equation}\label{e:tayloradj}
e^{\eps Q/h}A e^{-\eps Q/h} = \sum_{j=0}^J \frac {\eps^j}{j!} \left(\frac{\ad_{Q}}h\right)^j  A +  \frac {\eps^{J+1}}{J!}  \int_0^1(1-t)^J  e^{-\eps t\ad_{Q}/h} \left(\frac{\ad_{Q}}h\right)^{J+1} A dt.
\end{equation}
For any $M \in \N$, the integrand maps $H^{M}_{\varphi,h}(X) \to H^{-M}_{\varphi,h}(X)$ with norm $\Oh(h^{-2\delta(J+1)-N})$, $N = C_0(\|Q\|_{H^{M}_{\varphi,h}(X) \to H^{M}_{\varphi,h}(X)} + \|Q\|_{H^{-M}_{\varphi,h}(X) \to H^{-M}_{\varphi,h}(X)})$. Hence applying \eqref{e:tayloradj} with $J$ sufficiently large we see that \eqref{e:eepsfirst} can be improved to
\[
\|e^{\eps Q/h}A e^{-\eps Q/h}\|_{H^{s+m}_{\varphi,h}(X) \to H^s_{\varphi,h}(X)} \le C,
\]
and the integrand in \eqref{e:tayloradj} maps $H^{M}_{\varphi,h}(X) \to H^{-M}_{\varphi,h}(X)$ with norm $\Oh(1)$.
Applying \eqref{e:tayloradj} with $J \to \infty$ shows that $e^{\eps Q/h}A e^{-\eps Q/h} \in \Psi_\delta^m(X)$, and applying \eqref{e:tayloradj} with $J = 1$ we find
\begin{equation}\label{expexp}e^{\eps Q/h}A e^{-\eps Q/h} = A - \eps [A,Q/h] + \eps^2 h^{-4\delta} R,\end{equation}
where $R \in \Psi^{-\infty}_\delta(X)$. 

\section{Reduction to estimates for model operators}\label{s:reduce}

\subsection{Resolvent gluing}\label{s:glue} We reduce \eqref{e:main} to a series of estimates for model operators using a variant of the gluing method of \cite{Datchev-Vasy:Gluing}, adapted to the dynamics on $X$.

Let $P_C, P_K, P_F$ be \textit{model operators} for $P$ in the sense that they satisfy
\[
P_C|_{\{r < - R_g\}} = P|_{\{r < - R_g \}}, \quad P_K|_{\{|r| <  R_g + 3\}} = P|_{\{|r| <  R_g + 3\}}, \quad P_F|_{\{r > R_g\}} = P|_{\{r >  R_g\}}.
\]
So $P_C$ is a model in the cusp, $P_F$ is a model in the funnel, and $P_K$ is a model in a neighborhood of the remaining region (see Figure \ref{f:mfld}). 
We will construct the operators such that  $i(P_j  - P_j^*)= 2W_j$ for each $j \in \{C,K,F\}$, where $W_j \in C^\infty(X;[0,1])$ will be specified below. Note that  $W_j \ge 0$ implies $ \la \im P_j u, u \ra_{L^2_\varphi(X)} \le 0$ and hence
\[
 \|u\|_{L^2_\varphi(X)} \le (\im \lambda)^{-1} \|(P_j - \lambda)u\|_{L^2_\varphi(X)}, \quad \im \lambda > 0.
\]
Combining this with \eqref{e:hphidef} gives, for any $\chi_j \in C^\infty(X)$  bounded with all derivatives and satisfying $\supp \chi_j \subset \{P_j = P\}$,
\begin{equation}\label{e:modelboundj0}
\max_{j \in \{C,K,F\}}\|\chi _j R_j(\lambda) \chi_j \|_{L^2_\varphi(X) \to H^2_{\varphi,h}(X)} \le C(|\lambda| + (\im \lambda)^{-1}),  \quad \im \lambda > 0.
\end{equation}

Moreover we will construct $P_C, P_K,P_F$ such that  for every $\chi \in C_0^\infty(X)$, $E \in (0,1)$, there is $C_0 > 0$ such that for all $\Gamma > 0$ the cutoff resolvents $\chi R_j(\lambda) \chi$ continue holomorphically to $\lambda \in [-E,E] + i[-\Gamma h,\Gamma h]$, where they satisfy
\begin{equation}\label{e:modelboundj}
\max_{j \in \{C,K,F\}} \|\chi R_j(\lambda) \chi\|_{L^2_\varphi(X) \to H^2_{\varphi,h}(X)} \le C h^{-1 - C_0 |\im \lambda|/h}.
\end{equation}
Here $\chi$, $E$, $C_0$, and $\Gamma$ are the same as in \eqref{e:main}, but as elsewhere in the paper the constant $C$ and the implicit constant $h_0$ may be different.

We will also show  that the  $R_j(\lambda)$  propagate singularities forward along bicharacteristics, in the following limited sense. Let $\chi_1 \in C_0^\infty(X)$ and let $\chi_2,\chi_3 \in \Psi^1(X)$ be compactly supported differential operators. If $\supp \chi_1 \cup \supp \chi_3 \subset \{r < R_g+2\}$ and $\supp \chi_2 \subset \{r>R_g+2\}$, then, for any $N \in \N$,
\begin{equation}\label{e:propsingfk}
\|\chi_3 R_F(\lambda) \chi_2 R_K(\lambda) \chi_1\| _{L^2_\varphi(X) \to L^2_\varphi(X)}= \Oh(h^\infty),
\end{equation}
uniformly in $| \re \lambda| \le E$, $\im \lambda \in [-\Gamma h, h^{-N}]$.
If  $\supp \chi_1 \cup \supp \chi_3 \subset \{r < - R_g-2\}$ and $\supp \chi_2 \subset \{r>-R_g-2\}$, then, for any $N \in \N$,
\begin{equation}\label{e:propsingkc}
\|\chi_3 R_K(\lambda) \chi_2 R_C(\lambda) \chi_1\|_{L^2_\varphi(X) \to L^2_\varphi(X)} = \Oh(h^\infty)
\end{equation}
uniformly in$| \re \lambda| \le E$, $\im \lambda \in [-\Gamma h, h^{-N}]$. 

Note that in the first case \eqref{e:convexity} implies that no bicharacteristic passes through $T^*\supp \chi_1$, $T^*\supp\chi_2$, $T^*\supp\chi_3$ in that order, and in the second case this is implied by \eqref{e:convexity} together with the assumption that $\gamma^{-1}(\{r<-R_g\})$ is connected for any geodesic $\gamma \colon \R \to X$.  We will use these facts in the proofs of \eqref{e:propsingfk} and \eqref{e:propsingkc} below. 

Suppose for the remainder of the subsection that $P_C,P_K,P_F$ have been constructed. Let $\chi_C,\chi_K,\chi_F \in C^\infty(\R)$ satisfy $\chi_C + \chi_K + \chi_F = 1$,   $\supp \chi_F \subset (R_g + 1, \infty)$, $\supp (1-\chi_F) \subset (R_g+2,\infty)$, and $\chi_C(r) = \chi_F(-r)$ for all $r \in \R$. Then define a parametrix for $P-\lambda$ by
\[
G = \chi_C(r - 1)R_C(\lambda) \chi_C(r) + \chi_K(|r - 1|)R_C(\lambda) \chi_K(|r|) + \chi_F(r + 1)R_F(\lambda) \chi_F(r).
\]
Then $G$ is defined for $\im \lambda > 0$ and $\chi G \chi$ continues holomorphically to $\lambda \in [-E,E] - i[0,\Gamma h]$. Define operators $A_C,A_K,A_F$ by
\[\begin{split}
(P - \lambda)G &= \Id + [\chi_C(r - 1),h^2D_r^2]R_C(\lambda) \chi_C(r) +  [\chi_K(|r - 1|),h^2D_r^2] R_K(\lambda) \chi_K(|r|) \\ 
& \hspace{2.7in} + [\chi_F(r + 1),h^2D_r^2]R_F(\lambda) \chi_F(r) \\
&= \Id + A_C + A_K + A_F;
\end{split}\]
see Figure \ref{f:gluing}.
\begin{figure}[htbp]
\includegraphics[width=140mm]{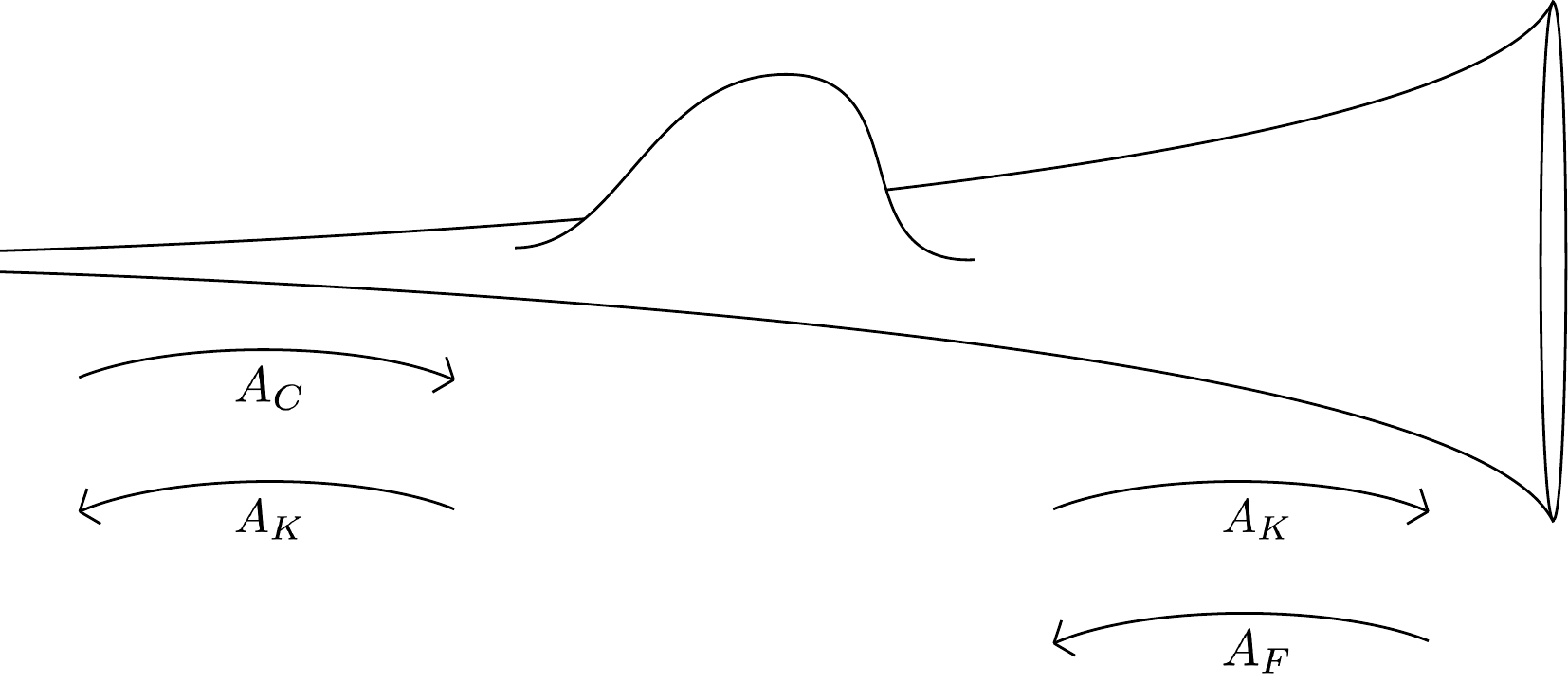}
\caption{The remainders $A_C$, $A_K$, and $A_F$ are localized on the right  in the region to the back of the arrows, and on the left near the tips of the arrows ($A_C$ is localized on the right at the support of $\chi_C$ and on the left at the support of $\chi_C'(\cdot-1)$, and so on), and this implies \eqref{e:a2}. They are microlocalized on the left in the indicated directions, and this implies \eqref{e:remtriv} (since, by \eqref{e:convexity}, no geodesic can follow one of the $A_K$ arrows and then the $A_F$ arrow, and so on).}\label{f:gluing}
\end{figure}
The estimates \eqref{e:modelboundj0} and \eqref{e:modelboundj} only allow us to remove the remainders $A_C,A_K,A_F$ by Neumann series for a narrow range of $\lambda$. To obtain improved remainders, observe that the support properties of the $\chi_j$ imply that
\begin{equation}\label{e:a2}
A_C^2 = A_K^2 = A_F^2 = A_C A_F = A_FA_C = 0;
\end{equation}
so, solving away using $G$, we obtain
\[
(P - \lambda) G(\Id - A_C - A_K - A_F) = \Id - A_KA_C - A_CA_K - A_FA_K - A_KA_F.
\]
Now the propagation of singularities estimates \eqref{e:propsingfk} and \eqref{e:propsingkc} imply
\begin{equation}\label{e:remtriv}
  \|A_FA_K \|_{L^2_\varphi(X) \to L^2_\varphi(X)} + \| A_CA_KA_CA_K\|_{L^2_\varphi(X) \to L^2_\varphi(X)} = \Oh(h^\infty),
\end{equation}
In this sense the $A_FA_K$ remainder term  is negligible. We again use \eqref{e:a2} to write
\[\begin{split}
(P - \lambda) &G(\Id - A_C - A_K - A_F +A_KA_C + A_C A_K  +  A_K A_F) = \\
&\Id  - A_FA_K  + A_CA_KA_C + A_FA_KA_C + A_KA_CA_K + A_CA_KA_F + A_KA_FA_K.
\end{split}\]
Now all remainders but  $A_CA_KA_C$, $A_KA_CA_K $, and $A_CA_KA_F$ are negligible in the sense of \eqref{e:remtriv}. Solving away again gives
\[\begin{split}
(P - \lambda) G(\Id - A_C - &A_K - A_F + A_KA_C + A_C A_K + A_K A_F \\- A_C&A_KA_C - A_KA_CA_K -  A_CA_KA_F) = \\
\Id  &- A_FA_K  + A_FA_KA_C + A_KA_FA_K \\&-A_KA_CA_KA_C -  A_CA_KA_CA_K  -  A_FA_KA_CA_K  -  A_KA_CA_KA_F.
\end{split}\]
Now all remainders but $A_KA_CA_KA_C$ are negligible. Solving away one last time gives
\[\begin{split}
(P - \lambda) G&(\Id - A_C - A_K - A_F + A_KA_C + A_C A_K + A_K A_F \\- A_C&A_KA_C - A_KA_CA_K -  A_CA_KA_F + A_KA_CA_KA_C) = \\
\Id  &- A_FA_K   + A_CA_KA_C+ A_FA_KA_C + A_KA_FA_K  -  A_CA_KA_CA_K  \\& -  A_FA_KA_CA_K  -  A_KA_CA_KA_F + A_CA_KA_CA_KA_C + A_FA_KA_CA_KA_C  = \Id + R,
\end{split}\]
where $R$ is defined by the equation, and $\|R\|_{L^2_\varphi(X) \to L^2_\varphi(X)} = \Oh(h^\infty)$. 
So for $h$ small enough we may write
\[\begin{split}
(P-\lambda)^{-1} = G\Big(&\Id - A_C - A_K - A_F + A_KA_C + A_C A_K + A_K A_F\\&  - A_CA_KA_C - A_KA_CA_K -  A_CA_KA_F + A_KA_CA_KA_C\Big)\sum_{k=0}^\infty (-R)^k.
\end{split}\]
Combining this equation with \eqref{e:modelboundj},  we see that $\chi(P-\lambda)^{-1}\chi$  continues to holomorphically to $|\re \lambda| \le E$, $\im \lambda \ge -\Gamma h$ and obeys
\[
\| \chi(P-\lambda)^{-1}\chi\|_{L^2_\varphi(X) \to H^2_{\varphi,h}(X)} \le C h^{-1 - 5C_0|\im \lambda|/h}.
\]

In summary, to prove \eqref{e:main} (and hence \eqref{logreg}), it remains to construct $P_C,P_K,P_F$ which satisfy \eqref{e:modelboundj0}, \eqref{e:modelboundj}, \eqref{e:propsingfk} and \eqref{e:propsingkc}.
We conclude this subsection by stating two Propositions which contain the estimates we will prove for $R_K(\lambda)$, after which we show how they reduce \eqref{e:propsingfk} and \eqref{e:propsingkc} to simpler propagation of singularities estimates for  $R_F(\lambda)$ and $R_C(\lambda)$ respectively, namely \eqref{e:modelpropf} and \eqref{e:modelprop}. In the next subsection we construct $P_K$ and prove the two Propositions.

\begin{prop}\label{p:rkbound}
For any $E \in (0,1)$ there is $C_0>0$ such that for any $M>0$ there are $C,h_0>0$ such that
\begin{equation}\label{e:rkbound}
\|R_K(\lambda)\|_{L^2_\varphi(X) \to H^2_{\varphi,h}(X)} \le C
\begin{cases}
h^{-1} + |\lambda|, \qquad & \im \lambda > 0, \\
h^{-1} e^{C_0 |\im \lambda|/h}, \qquad &\im \lambda \le 0,
\end{cases}
\end{equation}
for $|\re \lambda| \le E$,  $- Mh\log(1/h) \le \im \lambda $, $h \in (0,h_0]$.
\end{prop}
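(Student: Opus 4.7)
The plan is to construct $P_K$ as a compactly supported modification of $P$ that is semiclassically elliptic outside a compact subset of $T^*X$, and then combine an elliptic estimate in that region with a Sj\"ostrand--Zworski-type commutator/escape-function estimate in the remaining bounded region, following \cite{sz} and \cite{Datchev-Vasy:Gluing}. Concretely, I would choose $\psi \in C^\infty(X)$ with $\psi = 1$ on $\{|r| \le R_g + 3\}$ and $\supp \psi \subset \{|r| \le R_g + 4\}$, glue $P$ to a convenient bounded-geometry reference operator outside $\supp\psi$, symmetrize, and add a complex absorbing potential $-iW$ with $W \in C^\infty(X;[0,1])$, $\supp W \subset \{|r| \ge R_g + 4\}$, and $W \ge c > 0$ on $\{|r| \ge R_g + 5\}$. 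Then $i(P_K - P_K^*) = 2W_K$ with $W_K \ge 0$ as required by \S\ref{s:glue}, and the principal symbol $p_K - iW$ of $P_K$ is bounded below in modulus outside a compact subset $K_0 \subset T^*X$ at energies near $0$, so the elliptic estimate \eqref{ellipestx} controls $u$ microlocally away from $K_0$.

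For the $L^2_\varphi \to L^2_\varphi$ bound on the real axis, the main step is a commutator estimate with an escape function. The nontrapping hypothesis on $(X,g)$, together with absorption in $\{W \ge c\}$, ensures that every bicharacteristic of $\re p_K$ on the characteristic set $\{p_K = 0\}$ leaves $K_0$ in uniformly bounded time. This produces $q \in C_0^\infty(T^*X)$ with $H_{p_K} q \le -c_0$ on $K_0$. Setting $Q = \Op(q)$ and $P_K^\eps = e^{\eps Q/h} P_K e^{-\eps Q/h}$, the expansion \eqref{expexp} shows that the principal symbol of $P_K^\eps$ gains an imaginary contribution $\eps H_{p_K} q + O(\eps^2 h^{-4\delta})$, so that the imaginary part of the symbol of $P_K^\eps - \lambda$ is strictly negative on its characteristic set provided $|\im\lambda| \le c_0\eps/4$. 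Pairing $(P_K^\eps - \lambda) u^\eps = f^\eps$ with $u^\eps = e^{\eps Q/h} u$ and applying the sharp G\aa rding inequality \eqref{gardingx} together with the elliptic estimate off $K_0$ yields
\[
\eps \|u^\eps\|^2_{L^2_\varphi(X)} \le C \|f^\eps\|_{L^2_\varphi(X)} \|u^\eps\|_{L^2_\varphi(X)} + Ch \|u^\eps\|^2_{L^2_\varphi(X)},
\]
and choosing $\eps \ge C_1 h$ absorbs the last term to give $\|u^\eps\|_{L^2_\varphi} \le C \eps^{-1} \|f^\eps\|_{L^2_\varphi}$.

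To conclude, I would pick $\eps$ case by case. For $\im\lambda > 0$ the bound $\im P_K \le 0$ gives $\|R_K(\lambda)\|_{L^2_\varphi \to L^2_\varphi} \le (\im\lambda)^{-1}$ directly; combined with the real-axis bound and with the identity $(2+P_K) R_K(\lambda) = \Id + (2+\lambda) R_K(\lambda)$, this yields the $h^{-1} + |\lambda|$ bound in $H^2_{\varphi,h}$. For $\im\lambda \in [-Mh\log(1/h), 0]$, take $\eps = \max(C_1 h,\, C_2 |\im\lambda|)$; then \eqref{e:expest} gives $\|e^{\pm \eps Q/h}\| \le h^{-C_0 \|Q\|}$, which upon undoing the conjugation contributes the factor $e^{C_0|\im\lambda|/h}$, while dividing by $\eps \ge C_1 h$ produces the $h^{-1}$ prefactor. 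The principal obstacle is the construction of the escape function $q \in C_0^\infty(T^*X)$ with $H_{p_K} q$ uniformly negative on $K_0$: the mixed-end geometry of $X$ makes the unmodified geodesic flow non-uniformly nontrapping, but because the absorption and modification are carried out in the compact transition zone $\{|r| \approx R_g + 4\}$, the flow of $\re p_K$ is uniformly nontrapping in the sense that every bicharacteristic on $\{p_K = 0\}$ leaves a fixed compact subset of $T^*X$ in bounded time, so the standard construction from \cite[\S4]{sz} and \cite[\S2]{Datchev-Vasy:Gluing} applies.
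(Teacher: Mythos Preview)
Your proposal is correct and follows the same Sj\"ostrand--Zworski scheme as the paper (escape function $q$ with $H_pq\le -1$ on the trapped region, conjugation by $e^{\eps Q/h}$, sharp G\aa rding on the characteristic set, elliptic estimate elsewhere). Two points of comparison are worth recording.

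First, your construction of $P_K$ is more elaborate than needed. The paper simply sets $P_K=P-iW_K$ with $W_K\in C^\infty(X;[0,1])$ equal to $0$ near $\{|r|\le R_g+3\}$ and $1$ near $\{|r|\ge R_g+4\}$; no gluing to a bounded-geometry reference operator is required. The absorption $W_K\equiv 1$ at infinity already gives $\|(1-\chi_0)u\|^2=-\im\langle (P_K-E')(1-\chi_0)u,(1-\chi_0)u\rangle$, which handles the cusp and funnel ends directly, and the $H^2_{\varphi,h}$ upgrade uses only $\|(2+P)u\|\le 3\|u\|+\|(P-E')u\|$. Your construction would work too, but it is not needed.

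Second, and more interestingly, you bypass the complex-interpolation step that the paper uses. The paper fixes $\eps=10Mh\log(1/h)$, proves $\|(P_{K,\eps}-E')^{-1}\|\le 5/\eps$ for real $E'$, extends to the whole strip $|\im\lambda|\le Mh\log(1/h)$ by openness of the resolvent set, undoes the conjugation to obtain a crude bound $\|R_K(\lambda)\|\le h^{-N}$, and then upgrades this to $h^{-1}e^{C_0|\im\lambda|/h}$ via a Phragm\'en--Lindel\"of argument (a subharmonic majorant built from $\erfc$ and the maximum principle). Your choice $\eps=\max(C_1h,\,C_2|\im\lambda|)$ instead gives $\|(P_{K,\eps}-\lambda)^{-1}\|\le C\eps^{-1}$ directly (the condition $|\im\lambda|\le c_0\eps/4$ is built in), and since $\|e^{\pm\eps Q/h}\|\le e^{\eps\|Q\|/h}\le Ce^{C_2\|Q\|\,|\im\lambda|/h}$ while $\eps^{-1}\le (C_1h)^{-1}$, undoing the conjugation yields $Ch^{-1}e^{C_0|\im\lambda|/h}$ immediately. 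This is a legitimate shortcut: it avoids the interpolation machinery at the cost of rerunning the G\aa rding/elliptic argument for each $\lambda$ (equivalently, checking that the real-$E'$ proof goes through with the extra $-\im\lambda$ term, which it does since $|\im\lambda|\le c_0\eps/4$). The paper's route has the advantage of isolating the real-axis estimate as a clean intermediate statement; yours is more direct.
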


\begin{prop}\label{p:rkprop}
Let $\Gamma \in \R$, $E \in (0,1)$.  Let $A,B \in \Psi^0(X)$ have full symbols $a$ and $b$ with the projections to $X$ of $\supp a$ and $\supp b$  compact and suppose that 
\begin{equation}\label{e:rkpropdyn}
\supp a \cap \left[\supp b \cup \bigcup_{t \ge 0} \exp(tH_p) \left[ p^{-1}([-E,E]) \cap \supp b\right]\right] = \varnothing,
\end{equation}
where $\exp(tH_p)$ is the bicharacteristic flow of $p$, then,  for any $N \in \N$,
\begin{equation}\label{e:rkprop}
\|AR_K(\lambda)B\|_{L^2_\varphi(X) \to H^2_{\varphi,h}(X)} = \Oh(h^\infty),
\end{equation}
for  $|\re \lambda| \le E$, $ -\Gamma h \le \im \lambda \le h^{-N}$.
\end{prop}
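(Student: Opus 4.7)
The plan is to prove the estimate via semiclassical microlocal elliptic regularity and propagation of singularities, using the polynomial a priori bound from Proposition \ref{p:rkbound} to absorb $\Oh(h^\infty)$-remainders. Set $u = R_K(\lambda) B v$, so that $(P_K - \lambda) u = B v =: f$, with $\WF_h(f) \subset \supp b$. Proposition \ref{p:rkbound} (combined with the trivial bound $\|u\|\le(\im\lambda)^{-1}\|f\|$ for $\im\lambda > 0$) yields an a priori estimate $\|u\|_{H^2_{\varphi,h}(X)} \le Ch^{-M}\|v\|_{L^2_\varphi(X)}$ for some $M = M(\Gamma, N)$; hence any remainder of the form $\Oh(h^\infty)\|u\|_{H^{-N'}_{\varphi,h}(X)}$ converts into $\Oh(h^\infty)\|v\|_{L^2_\varphi(X)}$, and it suffices to bound $\|Au\|_{H^2_{\varphi,h}}$ modulo such remainders.

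Now I apply two standard microlocal tools. The operator $P_K$ has real principal symbol $p$ and satisfies $i(P_K - P_K^*) = 2W_K$ with $0 \le W_K \le 1$, and by its construction in the next subsection is semiclassically elliptic outside a compact set in $T^*X$. Since $\re\lambda \in [-E, E]$, semiclassical elliptic regularity \eqref{ellipestx} gives $\WF_h(u) \cap \{p \ne \re\lambda\} \subset \WF_h(f) \subset \supp b$, which already handles $T^*X \setminus p^{-1}([-E,E])$. On the compact characteristic set $\{p = \re\lambda\} \subset p^{-1}([-E,E])$, the sign $\im P_K \ge 0$ is exactly the one required for forward semiclassical propagation of singularities (as used e.g.\ in \cite{nz, Datchev-Vasy:Propagation}): tracing the backward bicharacteristic from any $\alpha$ lying outside the forward flowout $\bigcup_{t \ge 0}\exp(tH_p)[\supp b \cap p^{-1}([-E,E])]$, the curve avoids $\supp b$ and, by the ellipticity of $P_K$ at infinity together with compactness of the characteristic set, must exit $\{p = \re\lambda\}$ into the elliptic region, where $\alpha \notin \WF_h(u)$. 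Consequently $\WF_h(u)$ is contained in $\supp b \cup \bigcup_{t \ge 0}\exp(tH_p)[\supp b \cap p^{-1}([-E,E])]$, which by hypothesis \eqref{e:rkpropdyn} is disjoint from $\supp a$; a standard cutoff manipulation then yields $\|Au\|_{H^2_{\varphi,h}} = \Oh(h^\infty)(\|v\|_{L^2_\varphi} + h^{N'}\|u\|_{H^{-N'}_{\varphi,h}})$, and the a priori bound finishes the proof.

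The main obstacle is uniformity of the propagation estimate over the wide range $\im\lambda \in [-\Gamma h, h^{-N}]$, since the usual positive-commutator proof of propagation generates terms of order $|\im\lambda|/h$. For $\im\lambda = \Oh(h)$ these losses are bounded in terms of $\Gamma$ and go into the constant. For $\im\lambda > \|W_K\|_\infty$ the full symbol of $P_K - \lambda$ has imaginary part bounded away from zero, so $P_K - \lambda$ becomes globally semiclassically elliptic and propagation is replaced by a direct elliptic estimate. The intermediate regime $\im\lambda \in [Ch, 1]$ is covered by iterating propagation on short bicharacteristic segments; the $|\im\lambda|/h$-dependent polynomial losses at each step are comfortably absorbed into the genuine $\Oh(h^\infty)$ gain of the propagation argument, together with the polynomial a priori bound from the first step.
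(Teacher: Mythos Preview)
Your overall scheme---elliptic regularity off the characteristic set, forward propagation of singularities along $H_p$ on it, and the polynomial a priori bound from Proposition~\ref{p:rkbound} to absorb $\Oh(h^\infty)$ remainders---is exactly the paper's approach, carried out there as an explicit inductive positive commutator estimate with an escape function $q$ satisfying $H_p(q^2) \le -(2\Gamma+1)q^2$ near $\supp a$.

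The part you flag as ``the main obstacle'' is not an obstacle, and your proposed workaround for it is both unnecessary and, as written, does not close. In the positive commutator identity with a forward escape function $q$, the spectral parameter enters as the single term $-\tfrac{2}{h}\im\lambda\,\|Qu\|^2$ on the right-hand side of the bound for $\|Lu\|^2$ (see \eqref{e:poscommrk}--\eqref{e:pklinduc}). For $\im\lambda \ge 0$ this term is \emph{nonpositive} and is simply dropped; for $-\Gamma h \le \im\lambda \le 0$ it is bounded by $2\Gamma\|Qu\|^2$, which is then beaten by the gain $\ell^2 \ge (2\Gamma+1)q^2$ via sharp G\aa rding. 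The same holds for the complex absorption: $-\re\langle W_K Qu, Qu\rangle \le 0$ since $W_K \ge 0$. Thus one uniform argument covers the whole strip $\im\lambda \in [-\Gamma h, h^{-N}]$; there are no $|\im\lambda|/h$ losses to manage when $\im\lambda \ge 0$. Your ``intermediate regime'' iteration, in which each step would incur a polynomial loss competing with the $h^{1/2}$ gain, would not close if that loss were real---but it isn't. Replace your last paragraph with the sign observation above and the proof is complete.
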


Take $\varphi \in C^\infty(\R)$, bounded with all derivatives and supported in $(0,\infty)$, and take $\widetilde \chi_2, \ \widetilde \chi_3 \in C_0^\infty(X)$ such that $\supp \widetilde\chi_2 \subset \{r>R_g+2\}$ and $\widetilde\chi_3 \subset \{r < R_g+2\}$, and such that $\widetilde\chi_2 \chi_2 = \chi_2 \widetilde\chi_2 = \chi_2$ and $\widetilde\chi_3 \chi_3 = \chi_3 \widetilde\chi_3 = \chi_3$. 
Then \eqref{e:propsingfk} follows from
\begin{equation}\label{e:propsingbreak1}
\|\widetilde \chi_3 R_F \widetilde \chi_2 \varphi(h D_r) \| _{L^2_\varphi(X) \to H^2_{\varphi,h}(X)} + \|\widetilde \chi_2 (\Id - \varphi(hD_r)) R_K \chi_1\| _{L^2_\varphi(X) \to H^2_{\varphi,h}(X)}= \Oh(h^\infty).
\end{equation}
The estimate on the first term follows from \eqref{e:modelpropf} below, while the estimate on the second term follows from \eqref{e:rkprop} if $\supp(1-\varphi)$ is contained in a sufficiently small neighborhood of $(-\infty,0]$; it suffices to take a neighborhood small enough that no bicharacteristic in $p^{-1}([-E,E])$ goes from $T^*\supp \chi_1$ to $(T^*\supp \widetilde \chi_2) \cap \supp(1-\varphi(\rho))$, where $\rho$ is the dual variable to $r$ in $T^*X$, and such a neighborhood exists by \eqref{tanh} because when a bicharacteristic leaves $T^*\supp \chi_1$ it has $\rho \ge 0$, and \eqref{tanh} gives a minimum amount by which $\rho$ must grow in the time it takes the bicharacteristic to reach $T^*\supp \widetilde \chi_2$. An analogous argument reduces \eqref{e:propsingkc} to \eqref{e:modelprop}: the analog of \eqref{e:propsingbreak1} is
\[\|\widetilde \chi_3 R_K (\Id - \varphi(h D_r)) \widetilde \chi_2\| _{L^2_\varphi(X) \to H^2_{\varphi,h}(X)} + \| \varphi(hD_r) \widetilde \chi_2 R_C \chi_1\| _{L^2_\varphi(X) \to H^2_{\varphi,h}(X)}= \Oh(h^\infty),\]
where $\varphi \in C^\infty(\R)$ is bounded with all derivatives and supported in $(-\infty,0)$, and $\widetilde \chi_2, \ \widetilde \chi_3 \in C_0^\infty(X)$ have $\supp \widetilde\chi_2 \subset \{r>-R_g-2\}$ and $\widetilde\chi_3 \subset \{r < -R_g-2\}$, and such that $\widetilde\chi_2 \chi_2 = \chi_2 \widetilde\chi_2 = \chi_2$ and $\widetilde\chi_3 \chi_3 = \chi_3 \widetilde\chi_3 = \chi_3$.

\subsection{Model operator in the nonsymmetric region}\label{s:rk} In this subsection we define $P_K$ and prove Propositions \ref{p:rkbound} and \ref{p:rkprop}. Although the techniques involved are all essentially well known, we go over them in some detail here because they are important in the more complicated analysis of $P_C$ and $P_F$ below. 

Let $W_K \in C^\infty(X;[0,1])$ be $0$ near  $\{|r| \le R_g+3\}$, and $1$ near $\{|r| \ge R_g + 4\}$, and let 
\[
P_K = P - iW_K.
\]
We begin with the proof of Proposition 3.1, which follows \cite[\S4]{sz}. Fix
\[
E_0 \in (E,1), \qquad \eps = 10Mh\log(1/h).
\]
We will use the assumption that the flow is nontrapping to construct an \textit{escape function} $q \in C_0^\infty(T^*X)$, that is to say a function such that
\begin{equation}\label{e:escfunck}\begin{split}
 H_p q &\le -1 \textrm{ near } T^*\supp(1-W_K) \cap p^{-1}([-E_0,E_0]).
\end{split}\end{equation}
The construction will be given below. Then let $Q \in \Psi^{-\infty}(X)$ be a quantization of $q$, and
\[
P_{K,\eps} = e^{\eps Q/h}P_K e^{-\eps Q/h} = P_K - \eps [P_K,Q/h] + \eps^2 R,
\]
where $R \in \Psi^{-\infty}(X)$ (see \eqref{expexp}). We will prove that
\begin{equation}\label{e:pkepsest}
\|(P_{K,\eps} - E')^{-1}\|_{L^2_\varphi(X) \to H^2_{\varphi,h}(X)} \le 5 /\eps, \qquad E' \in [-E_0,E_0],
\end{equation}
from which it follows, using first  the openness of the resolvent set and then  \eqref{e:expest}, that
 \begin{equation}\label{e:pkest1}
\|(P_K - \lambda)^{-1}\|_{L^2_\varphi(X) \to H^2_{\varphi,h}(X)} \le \frac{h^{-N}}{M \log(1/h)}, \quad |\re \lambda| \le E_0, \ |\im \lambda| \le M h \log(1/h),
\end{equation}
where $N=10M(\|Q\|_{H^2_{\varphi,h}(X) \to H^2_{\varphi,h}(X)} + \|Q\|_{L^2_\varphi(X) \to L^2_{\varphi}(X)} )+1$.
Then we will show how to use complex interpolation to improve \eqref{e:pkest1} to \eqref{e:rkbound}.

\begin{proof}[Construction of $q \in C_0^\infty(T^*X)$ satisfying \eqref{e:escfunck}.] As in \cite[\S 4]{vz}, we  take $q$ of the form
\begin{equation}\label{e:qdefk}
q = \sum_{j=1}^J q_j,
\end{equation}
where each $q_j$ is supported near a bicharacteristic in $T^*\supp(1-W_K) \cap p^{-1}([-E_0,E_0])$.

First, for each $\wp \in T^*\supp(1-W_K) \cap p^{-1}([-E_0,E_0])$, define the following \textit{escape time}:
\[
T_\wp = \inf\{T \in \R\colon |t| \ge T-1  \Rightarrow \exp(tH_p)\wp \not\in T^*\supp(1-W_K)\}.
\]
Then put
\[
T = \max\{T_\wp\colon \wp \in T^*\supp(1-W_K) \cap p^{-1}([-E_0,E_0])\}.
\]
Note that the nontrapping assumption in \S\ref{s:assumptions} implies that $T < \infty$.  Let $\mathcal{S}_\wp$ be a hypersurface through $\wp$,  transversal to $H_p$ near $\wp$. If $U_\wp$ is a  small enough  neighborhood of $\wp$, then
\[
V_\wp = \{\exp(tH_p)\wp' \colon \wp' \in U_\wp \cap \mathcal{S}_\wp, |t| <T+1\}
\]
is diffeomorphic to $\R^{2n-1} \times (-T-1,T+1)$ with $\wp$ mapped to $(0,0)$. Denote this diffeomorphism by $(y_\wp,t_\wp)$. Further shrinking $U_\wp$ if necessary, we may  assume   the inverse image of $\R^{2n-1} \times \{|t|\ge T\}$ is disjoint from $T^*\supp(1-W_K)$. Then take $\varphi \in C_0^\infty(\R^{2n-1};[0,1])$ identically $1$ near $0$, and $\chi \in C_0^\infty((-T-1,T+1))$  with $\chi' = -1$ near $[-T,T]$, and put
\[
q_\wp = \varphi (y_\wp) \chi (t_\wp), \qquad
H_p q_\wp = \varphi(y_\wp)\chi'(t_\wp).
\]
Note $H_pq_\wp \le0$ on $T^*\supp(1-W_K)$ because $\chi' = -1$ there.
Let $V'_\wp$ be the interior of $\{H_p q_\wp = -1\}$, note that the  $V'_\wp$  cover $T^*(1-W_K) \cap p^{-1}([-E_0,E_0])$, and extract a finite subcover $\{V'_{\wp_1}, \dots, V'_{\wp_J}\}$. Then put  $q_j = q_{\wp_j}$ and define $q$ by \eqref{e:qdefk}, so that
\[
H_pq =  \sum_{j=1}^J \varphi (y_{\wp_j}) \chi_\wp' (t_{\wp_j}).
\]
Then $H_pq \le -1$ near $T^*(1-W_K) \cap p^{-1}([-E_0,E_0])$ because at each point at least one summand is, and the other summands are nonpositive.
\end{proof}

\begin{proof}[Proof of \eqref{e:pkepsest}.]
Let $\chi_0 \in C_0^\infty(X;[0,1])$ be identically $1$ on a large enough set that $\chi_0 Q = Q \chi_0 = Q$. In particular we have $(1-\chi_0) W_K = 1-\chi_0$, allowing us to write
\[
\|(1-\chi_0)u\|_{L^2_\varphi(X)}^2 = -\im \la( P_{K,\eps} - E')(1-\chi_0)u,(1-\chi_0)u\ra_{L^2_\varphi(X)}.
\]
\[
\|(1-\chi_0)u\|_{L^2_\varphi(X)} \le \|(P_{K,\eps} - E' )u\|_{L^2_\varphi(X)} + \|[P_{K,\eps},\chi_0]u\|_{L^2_\varphi(X)}.
\]
To estimate $\|\chi_0 u\|_{L^2_\varphi(X)} $ and the remainder term $\|[P_{K,\eps},\chi_0]u\|_{L^2_\varphi(X)} $ we introduce 
a microlocal cutoff $\phi \in C_0^\infty(T^*X)$ which is identically 1 near $T^*\supp(1-W_K) \cap p^{-1}([-E_0,E_0])$ and is supported in the interior of the set where $H_pq \le -1$. Since the principal symbol of $P_{K,\eps} - E'$ is
\[
p_{K,\eps} - E' = p - i W_K -E' - i\eps \{p - iW_K,q\},
\]
we have
\[
|p_{K,\eps} - E'| \ge 1-E_0, \  \textrm{ near } \supp (1 - \phi),
\]
for $|E'| \le E_0$, provided $h$ (and hence $\eps$) is sufficiently small.  Then if  $\Phi \in \Psi^{-\infty}(X)$ is a quantization of $\phi$, we find using the semiclassical elliptic estimate \eqref{ellipestx} that
\[
\|(\Id - \Phi) \chi_0 u\|_{H^2_{\varphi,h}(X)} \le C\left( \|(P_{K,\eps} - E')u\|_{L^2_\varphi(X)} + h\|u\|_{H^1_{\varphi,h}(X)}\right).
\]
Since $H_pq \le -1$ near $\supp \phi$ we see that
\[
\im p_{K,\eps} - E' = -W_K - \eps \{p,q\} \le - \eps, \ \textrm{ near } \supp \phi.
\]
Then, using the sharp G\aa rding inequality \eqref{gardingx}, we find that
\[\begin{split}
\|(P_{K,\eps} - E')\Phi  \chi_0 u\|_{L^2_\varphi(X)} \|\Phi \chi_0u\|_{L^2_\varphi(X)} &\ge - \la \im(P_{K,\eps} - E')\Phi  \chi_0u, \Phi \chi_0u\ra_{L^2_\varphi(X)} \\
&\ge \eps  \|\Phi\chi_0u\|_{L^2_\varphi(X)}^2 - Ch \|u\|_{H^{1/2}_{\varphi,h}(X)}^2.
\end{split}\]
This implies that
\[\begin{split}
\|u\|_{L^2_\varphi(X)}&\le  \|(1-\chi_0)u\|_{L^2_\varphi(X)} +  \|\Phi \chi_0 u\|_{L^2_\varphi(X)} + \|(\Id-\Phi)\chi_0 u\|_{L^2_\varphi(X)} \\
&\le C \|(P_{K,\eps} - E')u\|_{L^2_\varphi(X)}  + \eps^{-1} \|(P_{K,\eps} - E')u\|_{L^2_\varphi(X)}  + Ch^{1/2}\|u\|_{H^1_{\varphi,h}(X)},
\end{split}\]
As in the proof of \eqref{e:modelboundj0}, combining this with
\begin{equation}\label{e:pkupgrade}\begin{split}
\|u\|_{H^2_{\varphi,h}(X)} &\le 3\|u\|_{L^2_\varphi(X)} + \|(P-E')u\|_{L^2_\varphi(X)} \\&\le 4\|u\|_{L^2_\varphi(X)} + \|(P_{K,\varepsilon}-E')u\|_{L^2_\varphi(X)} + C\eps \|u\|_{L^2_\varphi(X)},
\end{split}\end{equation}
we obtain \eqref{e:pkepsest} for $h$ sufficiently small.
\end{proof}

\begin{proof}[Proof that \eqref{e:pkest1} implies \eqref{e:rkbound}.]

We follow the approach of \cite{tz} as presented in \cite[Lemma 3.1]{nsz}. Observe first that \eqref{e:modelboundj0} implies \eqref{e:rkbound} for  $\im \lambda \ge C_\Omega h$ for any $C_\Omega>0$.

Let $f(\lambda,h)$ be holomorphic in  $\lambda$  for $\lambda \in \Omega = [-E_0,E_0] + i [-Mh\log(1/h), C_\Omega h]$ and bounded uniformly in $h$  there. Suppose further that, for $\lambda \in \Omega$,
\[
|\re \lambda|  \le E \Rightarrow |f| \ge 1, \qquad |\re \lambda| \in [(E+E_0)/2, E_0] \Rightarrow |f| \le h^N.
\]

\begin{figure}[htbp]
\includegraphics[width=150mm]{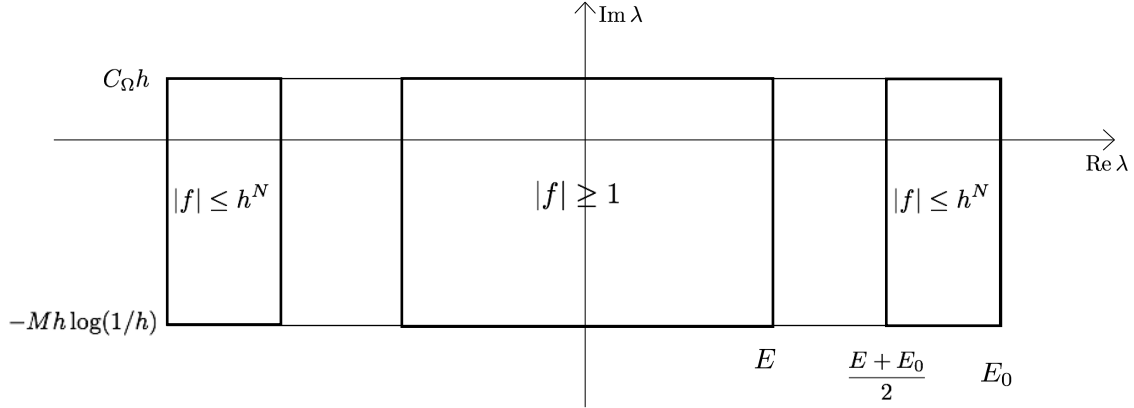}
\caption{Bounds on $f$ used in the complex interpolation argument.}
\end{figure}

For example, we may take $f$ to be a characteristic function convolved with a gaussian:
\[\begin{split}
f(\lambda, h) &= \frac 2 {\sqrt \pi} \log(1/h) \int_{-\tilde E}^{\tilde E} \exp\left(-\log^2(1/h) (\lambda - y)^2\right)dy\\
& = \erfc(\log(1/h)(\lambda - \tilde E)) - \erfc(\log(1/h)(\lambda + \tilde E)),
\end{split}\]
where $\tilde E = (3E+E_0)/4$, $\erfc z = 2 \int_z^\infty e^{-t^2}dt/\sqrt\pi$. We bound $|f|$ using the identity $\erfc(z) + \erfc(-z)= 2$ and the  fact that  $\erfc z =  \pi^{-1/2}z^{-1}e^{-z^2} (1 + \Oh(z^{-2}))$ for $|\arg z| < 3\pi/4$.

Then the subharmonic function
\[
g(\lambda,h) = \log \|(P_K -\lambda)^{-1}\|_{L^2_\varphi(X) \to H^2_{\varphi,h}(X)} + \log |f(\lambda,h)| + \frac{N \im \lambda}{Mh}
\]
obeys $g \le C$ on $\D \Omega \cap (\{|\re \lambda| = E_0\} \cup \{\im \lambda = -Mh\log(1/h)\})$, and  $g \le C + \log(1/h)$ on $\D \Omega \cap \{\im \lambda = C_\Omega h\}$. From the maximum principle and the lower bound on $|f|$ we obtain
\[
\log \|(P_K -\lambda)^{-1}\|_{L^2_\varphi(X) \to H^2_{\varphi,h}(X)} + \frac{N \im \lambda}{Mh} \le C + \log(1/h),
\]
for $\lambda \in \Omega$, $|\re \lambda| \le E$, from which \eqref{e:rkbound} follows for $\lambda \in \Omega$. 
\end{proof}

\begin{proof}[Proof of Proposition \ref{p:rkprop}]
This is similar to \cite[Lemma 5.1]{Datchev-Vasy:Gluing}. By  \eqref{ellipestx}, without loss of generality we may assume that $a$ is supported in a neighborhood of $p^{-1}([-E,E]) \cap \supp (1-W_K)$ which is as small as we please (but independent of $h$). In particular we may assume $\supp a$ is compact.

We will show that if $(P_K - \lambda)u = B f$ with $\|f\|_{L^2_\varphi(X)} = 1$, and if $\|A_0u\| \le C h^k$ for some $A_0 \in \Psi^0(X)$ with full symbol $a_0$ such that
\[
a_0 = 1 \textrm{ near } \supp a \cap p^{-1}([-E,E]),\qquad \supp a_0 \cap \bigcup_{t \ge 0} \exp(tH_p)\supp b = \varnothing,
\] 
then $\|A_1 u\| \le C h^{k+1/2}$ for each $A_1 \in \Psi^0(X)$ with full symbol $a_1$ satisfying $a_0 = 1$ near $\supp a_1$. Then the conclusion \eqref{e:rkprop} follows by induction: the base step is given by \eqref{e:rkbound}.

Let $q \in C_0^\infty(T^*X;[0,\infty))$ such that:
\begin{equation}\label{e:rkpropesc1}
a_0 = 1 \textrm{ near }\supp q, \qquad H_p (q^2) \le - (2 \Gamma + 1) q^2 \textrm{ near } \supp a_1,
\end{equation}
\begin{equation}\label{e:rkpropesc2}
H_pq \le 0 \textrm{ on } T^*\supp(1-W_K).
\end{equation}
The construction of $q$ is very similar to that of the function $q$  used in the proof of Proposition \ref{p:rkbound} above, and is also given in \cite[Lemma 5.1]{Datchev-Vasy:Gluing}. Write
\[
H_p (q^2) = -\ell^2 + r,
\]
where $\ell,r \in C_0^\infty(T^*X)$ satisfy 
\begin{equation}\label{e:derk}
\ell^2 \ge (2 \Gamma + 1)q^2, \qquad \supp r \subset \{W_K = 1\}.
\end{equation}
Let $Q,L,R \in \Psi^{-\infty}(X)$ have principal symbols $q,\ell,r$ respectively. Then
\[
i[P,Q^*Q] = -hL^*L + hR +h^2F + R_\infty,
\]
where $F \in \Psi^{-\infty}(X)$ has full symbol supported in $\supp q$ and $R_\infty \in h^\infty \Psi^{-\infty}(X)$. From this we conclude that
\begin{equation}\label{e:poscommrk}\begin{split}
\|L u\|_{L^2_\varphi(X)}^2 =  &-\frac 2 h \im \la Q^*Q P u,u \ra_{L^2_\varphi(X)}+ \la Ru,u\ra_{L^2_\varphi(X)} + h\la F u,u\ra_{L^2_\varphi(X)} + \Oh(h^\infty)\|u\|_{L^2_\varphi(X)}^2\\
=&-\frac2h \im \la Q^*Q(P_K - \lambda)u,u\ra_{L^2_\varphi(X)} - \re\la Q^*QW_K u,u\ra_{L^2_\varphi(X)} - \frac 2 h \im \lambda\|Qu\|_{L^2_\varphi(X)}^2\\
&+ \la Ru,u\ra_{L^2_\varphi(X)} + h\la F u,u\ra_{L^2_\varphi(X)} + \Oh(h^\infty)\|u\|_{L^2_\varphi(X)}^2.
\end{split}\end{equation}
We now estimate the right hand of \eqref{e:poscommrk} side term by term to prove that
\begin{equation}\label{e:pklinduc}
\|Lu\|_{L^2_\varphi(X)}^2 \le 2 \Gamma \|Qu\|_{L^2_\varphi(X)}^2 + C h \|A_0 u\|_{L^2_\varphi(X)}^2 +  \Oh(h^\infty)\|u\|_{L^2_\varphi(X)}^2,
\end{equation}
Indeed, since $\supp q \cap \supp b = \varnothing$ and since $(P_K - \lambda)u = B f $  it follows that 
\[
\la Q^*Q(P_K - \lambda)u,u\ra_{L^2_\varphi(X)} = \Oh(h^\infty)\|u\|_{L^2_\varphi(X)}^2.
\]
 Next, we write
\[
- \re\la Q^*QW_K u,u\ra_{L^2_\varphi(X)} = - \re\la W_K Q u,Q u\ra_{L^2_\varphi(X)} +\la Q^*[W_K,Q]u,u\ra_{L^2_\varphi(X)},
\]
and observe that the first term is nonpositive because $W_K \ge 0$, and the second term is bounded by $C h \|A_0 u\|_{L^2_\varphi(X)}^2$. Since $\im \lambda \ge -\Gamma h$ we have $- \frac 2 h \im \lambda\|Qu\|_{L^2_\varphi(X)}^2 \le 2 \Gamma \|Qu\|^2_{L^2_\varphi(X)}$, while since $W_K=1$ on $\supp r$ we have the elliptic estimate
\[
 \qquad  \la Ru,u\ra_{L^2_\varphi(X)} = C \|R(P_K-\lambda)u\|_{L^2_\varphi(X)} \|u\|_{L^2_\varphi(X)}  + C h\|A_0 u\|^2_{L^2_\varphi(X)}, 
\]
and the first term is $\Oh(h^\infty)\|u\|^2_{L^2_\varphi(X)}$ since $\supp r \cap \supp b = \varnothing$.
Finally $h\la F u,u\ra_{L^2_\varphi(X)}  \le Ch\|A_0u\|^2$ by inductive hypothesis, giving \eqref{e:pklinduc}.

But by \eqref{e:derk} and the sharp G\aa rding inequality  we have
\[
\la (D^*D - (2\Gamma + 1)Q^*Q)u,u\ra \ge - Ch\|A_0 u\|^2 - \Oh(h^\infty)\|u\|^2.
\]
Hence  by inductive hypothesis    we have
\[
\|Q u\|^2\le Ch^{2k+1}\|u\|^2,
\]
completing the inductive step. 
\end{proof}

\section{Model operator in the cusp}\label{s:modelcusp}
Take $W_C \in C^\infty(\R;[0,1])$ with $W_C(r) = 0$ near $r \le -R_g$, $W_C(r) = 1$ near $r \ge 0$, and let
\[
P_C =  h^2D_r^2  + e^{-2(r+\beta(r))} \Delta_{S_-} + h^2V(r) - 1 - iW_C(r),
\]
with notation as in \S\ref{spectrum}.

\begin{prop} \label{p:modelbound}
For every $\chi \in C_0^\infty(X)$, $E \in (0,1)$, there is $C_0>0$ such that for any $M>0$, there are $h_0,C>0$  such that the cutoff resolvent $\chi R_C(\lambda) \chi$ continues holomorphically from $\{\im \lambda>0\}$ to $\{|\re \lambda| \le  E$, $-Mh\log\log(1/h) \le \im \lambda \le M\}, \, h \in (0,h_0]$, and obeys
\begin{equation}\label{e:modelbound}\left\|\chi R_C(\lambda) \chi\right\|_{L^2_\varphi(X) \to H^2_{\varphi,h} (X)} \le C \begin{cases}
h^{-1} + |\lambda|, \qquad & \im \lambda > 0 \\
h^{-1-C_0 |\im \lambda|/h}, \qquad &\im \lambda \le 0,
\end{cases}. \end{equation}
\end{prop}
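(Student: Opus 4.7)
Because $P_C$ depends on the cross-section only through $\Delta_{S_-}$, the first move is separation of variables. Let $\{\phi_j\}_{j\ge 0}$ be an $L^2(S,dS_-)$-orthonormal eigenbasis of $\Delta_{S_-}$ with eigenvalues $\mu_j^2$ ($\mu_0 = 0$, $\mu_j \to \infty$). On the eigenspace spanned by $\phi_j$, the operator $P_C$ restricts to the 1D semiclassical Schr\"odinger operator
\[
P_{C,j} = h^2 D_r^2 + \mu_j^2 e^{-2(r+\beta(r))} + h^2V(r) - 1 - iW_C(r)
\]
on $L^2_\varphi(\R_r)$, and \eqref{e:modelbound} reduces to a bound on $\chi R_{C,j}(\lambda)\chi$ uniform in $j$.

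For $\mu_j$ in any bounded range the effective potential is monotone, hence nontrapping; $P_{C,j}$ is semiclassically elliptic as $r\to-\infty$ and damped as $r\to+\infty$. An \emph{elliptic} variant of the gluing method of \S\ref{s:glue}, in which the far-left elliptic piece and the far-right damped piece are glued to a compact middle piece handled by the 1D analogue of Proposition \ref{p:rkbound}, yields the estimate. For large $\mu_j$ I rescale via $r' = r + \log\mu_j$, normalizing the potential to $e^{-2r' + 2\beta(r'-\log\mu_j)}$; the turning point is moved to a $\mu_j$-independent location near $r' = 0$, while the damping $W_C(r' - \log\mu_j)$ is pushed out to $r'\approx\log\mu_j$. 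A second, semiclassically singular, rescaling is needed to bridge the intermediate regime $\mu_j h \sim 1$. In each regime the rescaled $P_{C,j}$ belongs to a uniformly controlled family of 1D semiclassical operators, and the exponential-conjugation argument of Proposition \ref{p:rkbound} delivers the holomorphic continuation to the strip $\im\lambda \ge -Mh\log\log(1/h)$ together with the loss $h^{-C_0|\im\lambda|/h}$.

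The combination of rescaling and exponential weight $e^{\eps Q/h}$ leaves behind a loss even at $\im\lambda = 0$, giving a bound of the form $h^{-1-C_0}$ rather than the sharp $h^{-1}$. To recover $h^{-1}$ I apply the \emph{non-compact} variant of the propagation-through-trapped-sets technique of Datchev--Vasy \cite{Datchev-Vasy:Propagation}: in the rescaled dynamics, trajectories with energy near the turning-point threshold spend a time $\sim \log\mu_j$ near the turning point before escaping to the damping region, which behaves like a trapped set whose extent grows with $\mu_j$. A positive commutator estimate with an escape function $q$ supported near this quasi-trapped set and obeying $H_p q \le -c\, q^2$, together with sharp G\aa rding as in the proof of Proposition \ref{p:rkprop}, removes the extra loss. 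A three-line complex interpolation between this $h^{-1}$ bound on the real axis and the trivial dissipative bound for $\im\lambda>0$, modeled on the end of the proof of Proposition \ref{p:rkbound}, then produces the estimate \eqref{e:modelbound} throughout the claimed strip.

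The main obstacle will be arranging all of this uniformly in $\mu_j$: constructing an escape function whose positive-commutator estimate is robust as $\mu_j\to\infty$, and performing the elliptic gluing in rescaled coordinates with constants independent of $\mu_j$. The ``non-compact'' qualifier in \cite{Datchev-Vasy:Propagation} is precisely the ingredient that makes this uniformity tractable, since the quasi-trapped set of the rescaled dynamics sits over an unbounded range of frequencies and cannot be confined to a compact subset of phase space.
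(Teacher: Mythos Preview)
Your overall architecture matches the paper's: separate variables, rescale, glue an elliptic left piece to a bounded middle piece, run a conjugated-escape-function argument \`a la Proposition~\ref{p:rkbound} on the middle piece, use a noncompact Datchev--Vasy positive commutator to recover $h^{-1}$ at $\im\lambda=0$, and finish by three-line interpolation. But the rescaling step, which is the crux, is misidentified.

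First a slip: after separating variables the reduced operator carries $(h\mu_j)^2e^{-2(r+\beta)}$, not $\mu_j^2e^{-2(r+\beta)}$ (the $h^2$ in front of $\Delta_{S_-}$ comes from \eqref{e:pdef}). So the right parameter is $\alpha=h\mu_j$, ranging over $\{0\}\cup[h\lambda_1,\infty)$. This matters for your case split: ``bounded $\mu_j>0$'' means $\alpha\sim h$, which is the \emph{hard} regime, not the easy one; the easy (elliptic) case is $\alpha\ge\alpha_0$, i.e.\ $\mu_j\gtrsim 1/h$. For small $\alpha$ the undamped part of the characteristic set is the interval where $\alpha^2e^{-2r}\le 1$ and $W_C=0$, namely $r\in[\log\alpha,-R_g]$, which has length $\sim\log(1/\alpha)$, up to $\sim\log(1/h)$.

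Your translation $r'=r+\log\mu_j$ (or the corrected $r'=r+\log\alpha$) moves the turning point to $r'\approx 0$ but leaves this long undamped interval intact; the escape time is still $\sim\log(1/\alpha)$, so any escape function $q$ with $H_pq\le -1$ on the whole undamped set must have $\|q\|\gtrsim\log(1/h)$, and the conjugation bound \eqref{e:expest} then costs $h^{-C\log(1/h)}$, which is far too much. The paper instead uses the \emph{multiplicative} rescaling (due to Burq)
\[
\tilde r=\frac{r}{\log(2\alpha_0/\alpha)},\qquad \tilde h=\frac{h}{\log(2\alpha_0/\alpha)},
\]
which compresses the undamped interval to unit length and transfers the loss into a new semiclassical parameter $\tilde h\le h$. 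After this rescaling the exponential potential is no longer uniformly in a symbol class, so one glues an elliptic model $P_E$ (where the potential is $\ge 4$) to a bounded model $P_B$ and runs the escape-function argument in $\Psi_\delta$ on $P_B$; this is the ``elliptic gluing'' you allude to, but it lives in the $\tilde r$ variable, not the original one. The resulting bound $C\tilde h^{-1}e^{C_0|\im\lambda|/\tilde h}$ then converts to $C\log(1/h)\,h^{-1-C_0|\im\lambda|/h}$ in the desired strip because $\log(2\alpha_0/\alpha)\le C\log(1/h)$.

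Finally, your description of the lossless real-axis step is slightly off. The Datchev--Vasy positive-commutator argument is run in the original $(r,h)$ variables on the cutoff operator $\chi R_B\chi$ (not in the rescaled ones), using an explicit escape function of the form $q=c-\rho(r+R_0+1)e^{-1/(r+R_0)}$ vanishing to infinite order at the left edge of $\supp\chi$; the point is that the a~priori bound \eqref{e:modelbound2} with its extra $\log(1/h)$ is available and the commutator absorbs it. With $h^{-1}$ on $\im\lambda\ge 0$ and \eqref{e:modelbound2} below, interpolation gives \eqref{e:modelbound2b}, and summing over $\mu_j$ gives the proposition.
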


\begin{prop}\label{p:modelprop}
Let $r_0 <0$, $\chi_- \in C_0^\infty((-\infty,r_0))$, $\chi_+ \in C_0^\infty((r_0,\infty))$, $\varphi \in C^\infty(\R)$ supported in $(-\infty,0)$ and bounded with all derivatives, $E \in (0,1)$, $\Gamma>0$ be given. Then there exists $h_0>0$ such that
\begin{equation}\label{e:modelprop}
\left\|\varphi(hD_r)\chi_+(r)R_C(\lambda)\chi_-(r)\right\|_{L^2_\varphi(X) \to H^2_{\varphi,h}(X)} = \Oh(h^\infty),
\end{equation}
 for $|\re \lambda| \le E, \, - \Gamma h \le \im \lambda \le h^{-N}$, $h \in (0,h_0]$.
 \end{prop}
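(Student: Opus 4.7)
The plan is to reproduce the positive commutator / propagation argument of Proposition~\ref{p:rkprop}, adapted to $P_C$. The crucial geometric input is the global monotonicity
\[
H_{p_C}\rho = 2(1+\beta'(r))e^{-2(r+\beta(r))}\sigma_-\ge 0,
\]
which holds on all of $T^*X$ thanks to \eqref{e:betahalf} and $\sigma_-\ge 0$; so $\rho$ is nondecreasing along bicharacteristics of $p_C$. Since $\supp\chi_-\subset\{r<r_0\}$ and $\varphi$ may be assumed supported in $(-\infty,-\delta]$ for some $\delta>0$, any forward bicharacteristic from $T^*\supp\chi_-$ that enters $\{r>r_0\}$ must have $\rho\ge 0$ there (its minimum of $\rho$ occurs at its turning point $r_{\min}$, after which $\rho$ can only grow), and therefore misses the microsupport of $\varphi(hD_r)\chi_+(r)$. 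This plays the role of the dynamical hypothesis \eqref{e:rkpropdyn}.

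I would take the escape function
\[
q(r,\rho,y,\eta) \;=\; e^{\alpha r}\,\psi_1(r)\,\psi_2(\rho)\,\psi_3(p_C),
\]
with $\alpha\ge(2\Gamma+1)/(4\delta)$, $\psi_1\in C_0^\infty((r_0-\eps,R+1);[0,1])$ equal to $1$ on $[r_0-\eps/2,R]$ (where $R>0$ is chosen so that $W_C\equiv 1$ on $[R,R+1]$), $\psi_2\in C^\infty(\R;[0,1])$ equal to $1$ on $(-\infty,-\delta]$ and vanishing on $[-\delta/2,\infty)$ with $\psi_2'\le 0$, and $\psi_3\in C_0^\infty((-E',E');[0,1])$ for some $E<E'<1$. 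Since $|\rho|\le\sqrt{1+E'}$ on $\supp\psi_3$ and $|\eta|_{S_-}\le Ce^{r+\beta}$ there as well, $q\in C_0^\infty(T^*X)$. A term-by-term expansion of $H_{p_C}q$ shows that the contributions from the $\alpha e^{\alpha r}$ factor, from $\psi_2'\dot\rho$, and from $\psi_1'\dot r$ on the left edge of $\supp\psi_1$ are all $\le 0$ on $\supp q$ (the $\psi_3'(p_C)H_{p_C}p_C$ contribution vanishes identically); the only sign-unfavourable term, $\psi_1'\dot r$ on the right edge $[R,R+1]$, is supported in $\{W_C=1\}$. Hence $H_{p_C}q\le 0$ on $T^*\supp(1-W_C)$, while near the microsupport of $\varphi(\rho)\chi_+(r)\cap p_C^{-1}([-E,E])$ one has $q=e^{\alpha r}$ and
\[
H_{p_C}(q^2)\;=\;4\alpha\rho\,q^2\;\le\; -4\alpha\delta\,q^2\;\le\; -(2\Gamma+1)q^2.
\]

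I would then decompose $H_{p_C}(q^2)=-\ell^2+r$ with $\ell^2\ge(2\Gamma+1)q^2$ near the target and $\supp r\subset\{W_C=1\}$, quantize $q,\ell,r$ to $Q,L,R\in\Psi^{-\infty}(X)$, and run the commutator identity from the proof of Proposition~\ref{p:rkprop} verbatim with $u=R_C(\lambda)\chi_-(r)f$. The source term $-(2/h)\im\la Q^*Q\chi_- f,u\ra$ is $\Oh(h^\infty)\|u\|\|f\|$ because $\supp q\cap\supp\chi_-=\varnothing$; the $W_C$-term is $\le 0$ modulo $\Oh(h)\|A_0u\|^2$; the $R$-term is $\Oh(h^\infty)\|u\|^2+Ch\|A_0u\|^2$ because $(P_C-\lambda)u=\chi_- f$ is microsupported disjointly from $\supp r$ and $R$ is elliptically reached through $P_C-\lambda$ in $\{W_C=1\}$; and the absorbing term $-(2\im\lambda/h)\|Qu\|^2$ is $\ge-2\Gamma\|Qu\|^2$ for $\im\lambda\ge-\Gamma h$ and is $\le 0$ for $\im\lambda\ge 0$. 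The sharp G\aa rding inequality \eqref{gardingx} then yields the inductive step $\|A_0u\|=\Oh(h^k)\Rightarrow\|Qu\|=\Oh(h^{k+1/2})$. A polynomial base bound $\|A_0u\|\le Ch^{-N_0}\|f\|$, uniform in $|\re\lambda|\le E$ and $-\Gamma h\le\im\lambda\le h^{-N}$, comes from Proposition~\ref{p:modelbound} for $\im\lambda\le\Gamma h$ and from $\|R_C(\lambda)\|_{L^2_\varphi\to L^2_\varphi}\le(\im\lambda)^{-1}$ for $\im\lambda\ge\Gamma h$, after sandwiching by a compactly supported cutoff containing $\supp A_0\cup\supp\chi_-$. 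Iterating gives $\|Qu\|=\Oh(h^\infty)\|f\|$; the elliptic estimate \eqref{ellipestx} converts this into the $L^2_\varphi\to L^2_\varphi$ bound near $\varphi(\rho)\chi_+(r)\cap p_C^{-1}([-E,E])$, and the upgrade \eqref{e:pkupgrade} promotes the target norm to $H^2_{\varphi,h}$.

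The most delicate step I anticipate will be handling the noncompact phase space $T^*X$: the principal symbol $p_C$ is polynomially unbounded in $\eta$, while the pseudodifferential calculus of \S\ref{secpseudoman} is tailored to compactly localized estimates. Every part of the argument (quantization of $q$, the decomposition $H_{p_C}(q^2)=-\ell^2+r$, the final elliptic estimate) has to be microlocalized to $p_C^{-1}((-E',E'))$ using cutoffs of $\psi_3$ type; the off-characteristic complement $\varphi(hD_r)\chi_+(r)(\Id-\psi_3(P_C))$ is then treated separately by direct elliptic regularity of $P_C-\lambda$, which is legitimate because $\chi_+\varphi(hD_r)$ is compactly supported on $X$ and so sees only a bounded slice of the fibers of $T^*X$.
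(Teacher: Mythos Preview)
Your approach is correct and takes a genuinely different route from the paper's. The paper does not run the positive commutator directly on $T^*X$; instead it separates variables over the eigenspaces of $\Delta_{S_-}$, reducing to the one-dimensional family $P(\alpha)=h^2D_r^2+\alpha^2e^{-2(r+\beta)}+h^2V-1-iW_C$, and proves the propagation estimate for each $\alpha$ individually: for $\alpha=0$ via complex scaling followed by a one-dimensional commutator argument (Lemma~\ref{l:p0}); for large $\alpha$ via an Agmon estimate (the operator is then uniformly elliptic on $\{r\le 0\}$); and for intermediate $\alpha$ via the rescaling $(\tilde r,\tilde h)=(r,h)/\log(2\alpha_0/\alpha)$, a gluing step to a bounded-potential model $P_B$, and then again a one-dimensional commutator. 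Your argument bypasses all of this by running the argument of Proposition~\ref{p:rkprop} on $T^*X$ with a single escape function, using the monotonicity $H_p\rho\ge 0$ (valid for every angular mode at once) to get the favorable sign of $H_p(q^2)$ away from $\{W_C=1\}$. This works because the propagation statement is local: the escape function lives in a fixed compact subset of $T^*X$, so the $\alpha$-dependent escape time that forces the rescaling in Proposition~\ref{p:modelbound} never appears. The paper's detour is natural only because the separation of variables is already in place for Proposition~\ref{p:modelbound}, where it is genuinely unavoidable; once that machinery exists, the one-dimensional propagation proofs are cheap byproducts.

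Two small remarks on your sketch. First, $\psi_3$ should cut off in the real symbol $p$ rather than in the complex $p_C$. Second, the concern in your final paragraph is slightly misplaced: the obstruction to $P_C\in\Psi^2(X)$ is not polynomial growth in $\eta$ (the symbol is quadratic there) but the exponential growth of the coefficient $e^{-2(r+\beta)}$ as $r\to-\infty$, and this is harmless precisely because your $\psi_1$ confines $r$ to a bounded interval, on which $P_C$ is a standard second-order semiclassical operator. You should also rename the exponent in $e^{\alpha r}$ to avoid collision with the paper's use of $\alpha$ for the separated eigenvalue parameter.
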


To prove these propositions we separate variables over the eigenspaces of $\Delta_{S_-}$, writing $P_C = \bigoplus_{m=0}^\infty  h^2D_r^2 + (h\lambda_m)^2 e^{-2(r + \beta(r))} + h^2 V(r) - 1 - iW_C(r),$ 
where $0 = \lambda_0 < \lambda_1  \le \cdots$ are square roots of the eigenvalues of $\Delta_{S_-}$. It suffices to prove \eqref{e:modelbound}, \eqref{e:modelprop} with $P_C$ replaced by $P(\alpha)$,  with estimates uniform in $\alpha \in \{0\} \cup [h\lambda_1,\infty)$, where
\[
P(\alpha) = h^2D_r^2 + \alpha^2e^{-2(r + \beta(r))} + h^2 V(r) - 1 - iW_C(r).
\]

\subsection{The case $\alpha = 0$}  The analysis of $(P(0) - \lambda)^{-1}$ is very similar to that of $R_K$ in \S\ref{s:rk}. The only additional technical ingredient is the method of complex scaling, which for this operator works just as in \cite{sz2,sz}.

\begin{lem}\label{l:p0}
For every $\chi \in C_0^\infty(X)$, $E \in (0,1)$, there is $C_0>0$ such that for any $M>0$, there exist $h_0,C>0$  such that the cutoff resolvent $\chi(P(0) - \lambda)^{-1}\chi$  continues holomorphically from $\{\im \lambda>0\}$ to $\{|\re \lambda| \le E$, $-Mh\log(1/h) \le \im \lambda\}, \, h \in (0,h_0]$, and obeys
\begin{equation}\label{e:modelbound0}
\left\|\chi(P(0) - \lambda)^{-1}\chi\right\|_{L^2(\R) \to H^2_h(\R)} \le C h^{-1}e^{-C_0 |\im \lambda|  /h}. \end{equation}
Let $r_0 \in \R$, $\chi_- \in C_0^\infty((-\infty,r_0))$, $\chi_+ \in C_0^\infty((r_0,\infty))$,$\varphi \in C^\infty(\R)$ supported in $(-\infty,0)$ and bounded with all derivatives, $\Gamma>0$ be given. Then there exists $h_0>0$ such that
\begin{equation}\label{e:modelprop0}
\left\|\varphi(hD_r)\chi_+(r)(P(0) - \lambda)^{-1}\chi_-(r)\right\|_{L^2(\R) \to H^2_h(\R)} = \Oh(h^\infty),
\end{equation}
 for $|\re \lambda| \le E, \, - \Gamma h \le \im \lambda \le h^{-N}$, $h \in (0,h_0]$. \end{lem}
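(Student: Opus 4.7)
The plan is to follow the blueprint of Propositions \ref{p:rkbound} and \ref{p:rkprop} from Section \ref{s:rk} applied to the one-dimensional operator $P(0)$, with the single additional ingredient of complex scaling at $r \to -\infty$, as in \cite{sz2,sz}. Complex scaling is needed because $W_C$ vanishes on $\{r \le -R_g\}$, so without modification the resolvent of $P(0)$ does not continue across the real axis from below. Accordingly, pick $R_1 > R_g$ large and $\theta_1 \in (0,\theta_0)$ small, and deform the contour $\R$ to a contour $\Gamma_{\theta_1} \subset \C$ which equals $\R$ on $\{\re r > -R_1\}$ and equals $e^{i\theta_1}(r+R_1)-R_1$ for $\re r \ll -R_1$ (with smooth interpolation). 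Because of the hypothesis on holomorphic extensibility of $\beta$, the coefficients of $P(0)$ extend analytically onto $\Gamma_{\theta_1}$ and give rise to a scaled operator $P_{\theta_1}(0)$ whose cutoff resolvent coincides with that of $P(0)$ for any $\chi \in C_0^\infty((-R_1,\infty))$ and $\im \lambda>0$, and which continues meromorphically as $\lambda$ ranges over a sector containing $[-E,E]-i[0,M]$.

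For \eqref{e:modelbound0} I would then repeat the argument of Proposition \ref{p:rkbound} in this setting. The principal symbol of $P_{\theta_1}(0)$ is $\rho^2-1-iW_C(r)$ on the unscaled part of $\Gamma_{\theta_1}$ and $e^{-2i\theta_1}\rho^2-1$ plus small perturbations on the scaled part, so it is uniformly elliptic outside a compact set in $(r,\rho)$. The dynamics of $p=\rho^2-1$ in one dimension is trivial: in $p^{-1}([-E_0,E_0])$ one has $|\rho|\ge \sqrt{1-E_0}$, so every trajectory leaves $\{|r|\le R_1\}$ in finite time either into $\{W_C=1\}$ (when $\rho>0$) or into the scaled region (when $\rho<0$). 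This produces an escape function $q\in C_0^\infty(T^*\R)$ with $H_p q\le -1$ near $T^*\supp(1-W_C)\cap p^{-1}([-E_0,E_0])$ by the same cover-and-glue construction as in the proof of Proposition \ref{p:rkbound}. I can then exponentiate $Q=\Op(q)$ against $P_{\theta_1}(0)$ using Section \ref{expop}, apply the sharp G\aa rding inequality together with the elliptic estimate on the scaled and absorbing regions, deduce $\|(P_{\theta_1,\eps}(0)-E')^{-1}\|\le 5/\eps$ for $E'\in[-E_0,E_0]$, and then run the Tang--Zworski complex interpolation argument \cite{tz} of the last step of the proof of Proposition \ref{p:rkbound} to obtain \eqref{e:modelbound0}.

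For \eqref{e:modelprop0} I would imitate the proof of Proposition \ref{p:rkprop}. The microsupport of $\varphi(hD_r)\chi_+(r)$ lies in $\{r>r_0,\ \rho<0\}$, while $\chi_-(r)$ is supported in $\{r<r_0\}$. A forward bicharacteristic of $p=\rho^2-1$ starting in $T^*\supp\chi_-$ has $\dot r = 2\rho$ of constant sign, and cannot enter $\{r>r_0,\ \rho<0\}$ in the future; equivalently, backward bicharacteristics from the target microsupport do not meet $\supp\chi_-$. I can therefore build a nonnegative $q\in C_0^\infty(T^*\R)$ with $q\equiv 0$ near $\supp\chi_-$, with $H_p(q^2)\le -(2\Gamma+1)q^2$ near the microsupport of $\varphi(hD_r)\chi_+$, and with $H_p q\le 0$ on $T^*\supp(1-W_C)$. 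The positive-commutator induction from the proof of Proposition \ref{p:rkprop} (using the sharp G\aa rding inequality, absorption by $W_C$, and ellipticity outside $p^{-1}([-E,E])$, now also outside the scaled region) then yields $\|\Op(q)^k(P_{\theta_1}(0)-\lambda)^{-1}\chi_-\|=\Oh(h^{\infty})$ for all $k$, and elliptic estimates off $\supp q$ upgrade this to \eqref{e:modelprop0}.

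The only genuinely delicate point is arranging the escape function $q$ and the scaling contour $\Gamma_{\theta_1}$ so that $\supp q$ is disjoint from the scaling region: this guarantees that the symbol calculus and exponentiation results of Section \ref{secpseudor}--\ref{expop}, developed on $L^2_\varphi(X)$, apply unmodified to $P_{\theta_1}(0)$ and its conjugates. The bound \eqref{e:betahalf} and the decay $|\beta^{(k)}(r)|\le C_k|r|^{-k}$ ensure that $P_{\theta_1}(0)$ is a relatively compact perturbation of $h^2 e^{-2i\theta_1}D_r^2 - 1$ at $-\infty$, which is all that is needed for the elliptic parametrix construction in the scaled region. Once that compatibility is set up, every step above is a one-dimensional specialization of an argument already carried out in Section \ref{s:rk}.
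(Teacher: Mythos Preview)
Your proposal is correct and follows the same approach as the paper. The paper makes the one-dimensional escape function explicit---$q(r,\rho)=-4r\rho/(1-E_0)^2$ on $\{|r|\le R+1,\ |\rho|\le 2\}$, with microlocal cutoff $\phi=\phi(\rho)$ depending only on the fiber variable---and rather than arranging $\supp q$ disjoint from the scaled region, it allows overlap and takes the scaling parameter $\delta$ small enough that the $\Oh(\delta)$ perturbation to $H_pq$ is harmless, while on the fully scaled piece $\{r\le -R-1\}$ the negativity of $\im p_{\delta,\eps}$ comes from the scaling term $-2\delta\gamma'\rho^2/|1+i\delta\gamma'|^4$ itself. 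For \eqref{e:modelprop0} the paper uses a product escape function $q=\varphi_r(r)\varphi_\rho(\rho)$ and allows $H_{p_\delta(0)}q>0$ in the elliptic region $\{r>R+1\}\cup\{\rho<-2\}$, but the positive-commutator induction is otherwise exactly as you outline.
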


\begin{proof}[Proof of \eqref{e:modelbound0}]

We use complex scaling to replace $P(0)$ by the complex scaled operator $P_\delta(0)$, defined below. As we will see, $P_\delta(0)$ is semiclassically elliptic for $|r|$ sufficiently large and obeys \eqref{e:modelbound0} without cutoffs.

We have
\[
P(0) = h^2 D_r^2 + h^2 V(r) -1 - iW_C(r).
\]
Fix $R>R_g$ sufficiently large that 
\begin{equation}\label{e:p0supp}
\supp \chi \cup \supp \chi_+ \cup \supp \chi_- \subset (-R,\infty).
\end{equation}
Let $\gamma \in C^\infty(\R)$ be nondecreasing and obey $\gamma(r) = 0$ for $r \ge -R$, $\gamma'(r) = \tan \theta_0$ for $r \le -R-1$ (here $\theta_0$ is as in \S\ref{s:assumptions}), and impose further that $\beta(r)$ is holomorphic near $r + i \delta \gamma(r)$ for every $r < -R$, $\delta \in (0,1)$. Below we will take $\delta \ll 1$ independent of $h$.


Now put
\[
P_\delta(0) = \frac{h^2 D_r^2}{(1 + i \delta \gamma'(r))^2} -  h \frac{\delta \gamma''(r)hD_r}{(1 + i \delta \gamma'(r))^3} + h^2V(r + i \delta \gamma(r)) - 1 - iW_C(r).
\]
If we define the differential operator with complex coefficients
\[
\widetilde P(0) = h^2 D_z^2 +  h^2 V(z) -1 - W_C(z),
\]
then we have
\begin{equation}\label{e:p0restr}
P(0) = \widetilde P(0)|_{\{z = r \colon r \in \R\}}, \qquad P_\delta(0) = \widetilde P(0)|_{\{z = r + i \delta \gamma(r) \colon r \in \R\}}.
\end{equation}
We will show that if $\chi_0 \in C^\infty(\R)$ has $\supp \chi_0 \cap \supp \gamma = \varnothing$, then
\begin{equation}\label{e:p0agree}
\chi_0(P(0) - \lambda)^{-1} \chi_0 = \chi_0(P_\delta(0) - \lambda)^{-1} \chi_0, \qquad \im \lambda >0.
\end{equation}
From this it follows that if one of these operators has a holomorphic continuation to any domain, then so does the other, and the continuations agree, so that it suffices to prove \eqref{e:modelbound0} and \eqref{e:modelprop0} with $P(0)$ replaced by $P_\delta(0)$. To prove  \eqref{e:p0agree} we will prove  that if
\[(P(0)-\lambda)u = v, \qquad (P_\delta(0)-\lambda)u_\delta =v,\]
for $v \in L^2(\R)$ with $\supp v \subset \{r \colon \gamma(r) = 0\}$, and $u,u_\delta\in L^2(\R)$, then
\[ u|_{\{r \colon \gamma(r) = 0\}} = u_\delta |_{\{r \colon \gamma(r) = 0\}}.\]
Thanks to \eqref{e:p0restr}, it suffices to show that if $\tilde u$ solves $(\widetilde P(0)- \lambda) \tilde u =v$ with $\tilde u|_{\{z = r, r \in \R\}} \in L^2(\R)$, then $\tilde u|_{\{z = r + i \delta \gamma(r), r \in \R\}}\in L^2(\R)$. For the proof of this statement we may take $\lambda$ fixed with $\re \lambda = 0$ since the general statement follows by holomorphic continuation.

Observe that for $\re z < - R$, we have
\begin{equation}\label{e:p0utildeeq}( \widetilde P(0)- \lambda)\tilde u(z) = 0.\end{equation}
We will use the WKB method to construct solutions $u_\pm$ to \eqref{e:p0utildeeq} which are exponentially growing or decaying as $\re z \to -\infty$.
Define
\[f(z) =  V(z) - (1+ \lambda)/h^2,\qquad \varphi(z) =(4f(z)f''(z) - 5f'(z)^2)(16f(z))^{-5/2}.\]
Now (see e.g. \cite[Chapter 6, Theorem 11.1]{Olver:Asymptotics}) there exist  two solutions to \eqref{e:p0utildeeq} given by
\[ u_\pm(z) = f(z)^{-1/4}e^{\pm \int_{\gamma_{z,-R}} \sqrt{f(z')}dz'}(1 + b_\pm(z)), \qquad \re z < -R,\]
taking  principal branches of the  roots and with the contour of integration $\gamma_{z,-R}$ taken from $z$ to $-R$ such that $\sqrt{\re z'}$ is monotonic along $\gamma_{z,-R}$. The functions $b_\pm$ obey
\[|b_\pm(z)| \le \exp(\max (|\varphi(z')|\colon z' \in \gamma_\pm))-1 \le Ch, \]
when $\re z> R$, where  $\gamma_+$ (resp. $\gamma_-$) is a contour from $-\infty$ to $z$ (resp. $z$ to $-R$) such that $\sqrt{\re z'}$ is monotonic along the contour. It follows that, for fixed $h$ sufficiently small,
\[|u_+(z)| \le C e^{\re z/C}, \qquad |u_-(z)| \ge C e^{-\re z/C},\]
for $\re z < - R$. Hence  $\tilde u|_{\{z = r, r \in \R\}} \in L^2(\R)$ implies that that $\tilde u$ is proportional to $u_+$. This implies that $\tilde u|_{\{z = r + i \delta \gamma(r), r \in \R\}}\in L^2(\R)$, completing the proof of \eqref{e:p0agree}.

 Fix 
\[
E_0 \in (E,1), \qquad \eps = 10 M h \log(1/h).
\]
The semiclassical principal symbol of $P_\delta(0)$ is
\begin{equation}\label{e:p0symb}
p_\delta(0) = \frac{\rho^2}{(1+ i\delta \gamma'(r))^2} - 1
= \rho^2(1 + \Oh(\delta)) - 1.
\end{equation}
In this case the escape function can be made more explicit: we  take $q \in C_0^\infty(T^*\R)$ with
\begin{equation}\label{e:hp0q0}
q(r,\rho) = - 4r \rho(1-E_0)^{-2}, \qquad H_{p_\delta(0)} q  = - 8\rho^2(1-E_0)^{-2}(1 + \Oh(\delta)),
\end{equation}
on $\{|r| \le R+1, \, |\rho| \le 2\}$.
Let $Q \in \Psi^{-\infty}(\R)$ be a quantization of $q$ and put
\[
P_{\delta,\eps}(0) = e^{\eps Q/h} P_\delta(0) e^{-\eps Q/h} = P_\delta(0) - \eps[P_\delta(0),Q/h] + \eps^2 R,
\]
where $R \in \Psi^{-\infty}(\R)$ (see \eqref{expexp}).
We will prove
\begin{equation}\label{e:modelbound0eps0}
\|(P_{\delta,\eps}(0) -E')^{-1}\|_{L^2(\R) \to H^2_h(\R)}\le 5/\eps, \qquad E' \in [-E_0,E_0]
\end{equation}
 from which it follows by \eqref{e:expest} that
\begin{equation}\label{e:modelbound001}
\|(P_\delta(0) -\lambda)^{-1}\|_{L^2(\R) \to H^2_h(\R)} \le \frac{h^{-N}}{M\log(1/h)}, \qquad |\re \lambda| \le E_0, \ |\im \lambda| \le M h \log(1/h),
\end{equation}
where $N = 10M(\|Q\|_{H^2_h(\R) \to H^2_h(\R)} + \|Q\|_{L^2(\R) \to L^2(\R)})+1$. As before we will use complex interpolation to improve \eqref{e:modelbound001} to
\begin{equation}\label{e:modelbound0020}
\|(P_\delta(0) -\lambda)^{-1}\|_{L^2(\R) \to H^2_h(\R)}  \le Ch^{-1} e^{C|\im \lambda|/h}.
\end{equation}
for $- E \le \re \lambda \le E$, $- M h \log(1/h)$. Combining  \eqref{e:p0agree} and \eqref{e:modelbound0020} gives \eqref{e:modelbound0}.

Let $\phi \in C_0^\infty(\R;[0,1])$ have $\phi(\rho) = 1$ for $|\rho|$ near $[1-E_0,1+E_0]$ and  $\supp \phi \subset\{(1-E_0)/2 < |\rho| < 2\}$. By \eqref{e:p0symb}, if $\delta$ is  small enough and $h$ is small enough depending on $\delta$, then on $\supp (1-\phi(\rho))$ we have $|p_{\delta,\eps}(0) - E'| \ge \delta(1 + \rho^2)/C$, uniformly in $E' \in [-E_0,E_0]$ and in $h$, where $p_{\delta,\eps}(0)$ is the semiclassical principal symbol of $P_{\delta,\eps}(0)$. Hence, by the semiclassical elliptic estimate \eqref{ellipestrn},
\[
\|(\Id - \phi(hD_r)) u\|_{H^2_h(\R)} \le C \delta^{-1} \|(P_{\delta,\eps}(0) - E')(\Id - \phi(hD_r)) u\|_{L^2(\R)} + \Oh(h^\infty)\|u\|_{H^{-N}_h(\R)}.
\]
On $\supp\phi(\rho)$ we use the negativity of the imaginary part of the principal symbol of $P_{\delta,\eps}(0)$. Indeed, on $\{(r,\rho)\colon \rho \in \supp \phi, \, |r| \le R+1\}$ we have, using \eqref{e:hp0q0},
\[
\im p_{\delta,\eps}(0) = \im p_{\delta}(0) + \im i\eps H_{p_{\delta,\eps}(0)} q = \frac{-2\delta \gamma'(r)\rho^2}{|1+i\delta\gamma'(r)|^4} - \frac{8\eps \rho^2}{(1-E_0)^2}(1 + \Oh(\delta)) \le - \eps,
\]
provided $\delta$ is sufficiently small. Meanwhile, on $\{(r,\rho)\colon \rho \in \supp \phi, \, |r| \ge R+1\}$ we have 
\[
\im p_{\delta,\eps}(0) = \im p_{\delta}(0) + \im i\eps H_{p_{\delta,\eps}(0)} q = \frac{-2\delta \tan\theta_0 \rho^2 }{|1+i\delta  \tan\theta_0|^4} + \Oh(\eps) \le -\delta/C,
\]
provided $h$ (and hence $\eps$) is sufficiently small.

 Then, using the sharp G\aa rding inequality \eqref{gardingrn}, we have, for $h$ sufficiently small,
\[\begin{split}
\|\varphi(h D_r)u\|_{L^2(\R)} \|(P_{\delta,\eps}(0) - E')\varphi(h D_r)u\|_{L^2(\R)} &\ge - \la\im (P_{\delta,\eps}(0) - E')\varphi(h D_r)u, \varphi(h D_r)u \ra_{L^2(\R)}  \\
& \ge \eps \|\varphi(hD_r) u\|_{L^2(\R)}^2 - C h \|u\|^2_{H^{1/2}_h(\R)}.
\end{split}\]
We deduce \eqref{e:modelbound0eps0} from this just as we did \eqref{e:pkepsest} above.

To improve \eqref{e:modelbound001} to \eqref{e:modelbound0020} we use almost the same complex interpolation argument as we did to improve \eqref{e:pkest1} to \eqref{e:rkbound}. The only difference is that in the first step we note that 
\[
\im p_\delta(0) = \frac {-2\delta\gamma'(r)}{|1+i\delta\gamma'(r)|^4} \le 0,
\]
so by the sharp G\aa rding inequality \eqref{gardingrn} we have, for some $C_\Omega>0$, $\la \im P_\delta(0) u, u \ra_{L^2(\R)} \ge -C_\Omega h \|u\|_{L^2(\R)}^2$, so that $\|(P_\delta(0) - \lambda)^{-1}\|_{L^2(\R)} \le1/ C_\Omega h$, when $\im \lambda \ge 2 C_\Omega h$.
\end{proof}

\begin{proof}[Proof of \eqref{e:modelprop0}]
Let $(P_\delta(0) - \lambda)u = f,$
where $\|f\|_{L^2(\R)} =1$,  $\supp f \subset \supp \chi_-$ and $P_\delta(0)$ is as in the proof of \eqref{e:modelbound0}. We must show that 
\begin{equation}\label{e:modelprop0conc}
\|\varphi(hD_r) \chi_+(r) u\|_{H^{2}_h(\R)} = \Oh(h^\infty);
\end{equation}
recall that the replacement of $P(0)$ by $P_\delta(0)$ is justified by \eqref{e:p0agree}. To prove \eqref{e:modelprop0conc} we use an argument by induction based on a nested sequence of escape functions.

More specifically, take
\[
q = \varphi_r(r)\varphi_\rho(\rho), \qquad 
H_{p_\delta(0)} q = 
2\rho\varphi'_r(r)\varphi_\rho(\rho) + \Oh(\delta),
\]
where $\varphi_r \in C_0^\infty(\R;[0,\infty))$ with $\supp \varphi_r \subset (r_0,\infty)$, $\varphi_r' \ge 0$ near $[r_0,R+1]$ (here $R$ is as in \eqref{e:p0supp}), $\varphi_r' > 0$ near $\supp \chi_+$. Take $\varphi_\rho \in C_0^\infty(\R;[0,\infty))$ with $\supp \varphi_\rho \subset (-\infty,0)$, $\varphi_\rho' \le 0$ near $[-2,0]$, $\varphi_\rho \ne 0$ near $\supp \varphi \cap [-2,0]$.  Impose further that $\sqrt\varphi_r, \sqrt\varphi_\rho \in C_0^\infty(\R)$, and that $\varphi'_r \ge c \varphi_r$ for $r \le R+1$, where $c >0$ is chosen large enough that $H_{p_0(\delta)} q \le -(2\Gamma +1)q$ on $\{r \le R+1, \rho \ge -2\}$: see Figure \ref{f:p0prop}.

\begin{figure}[htbp]
\includegraphics{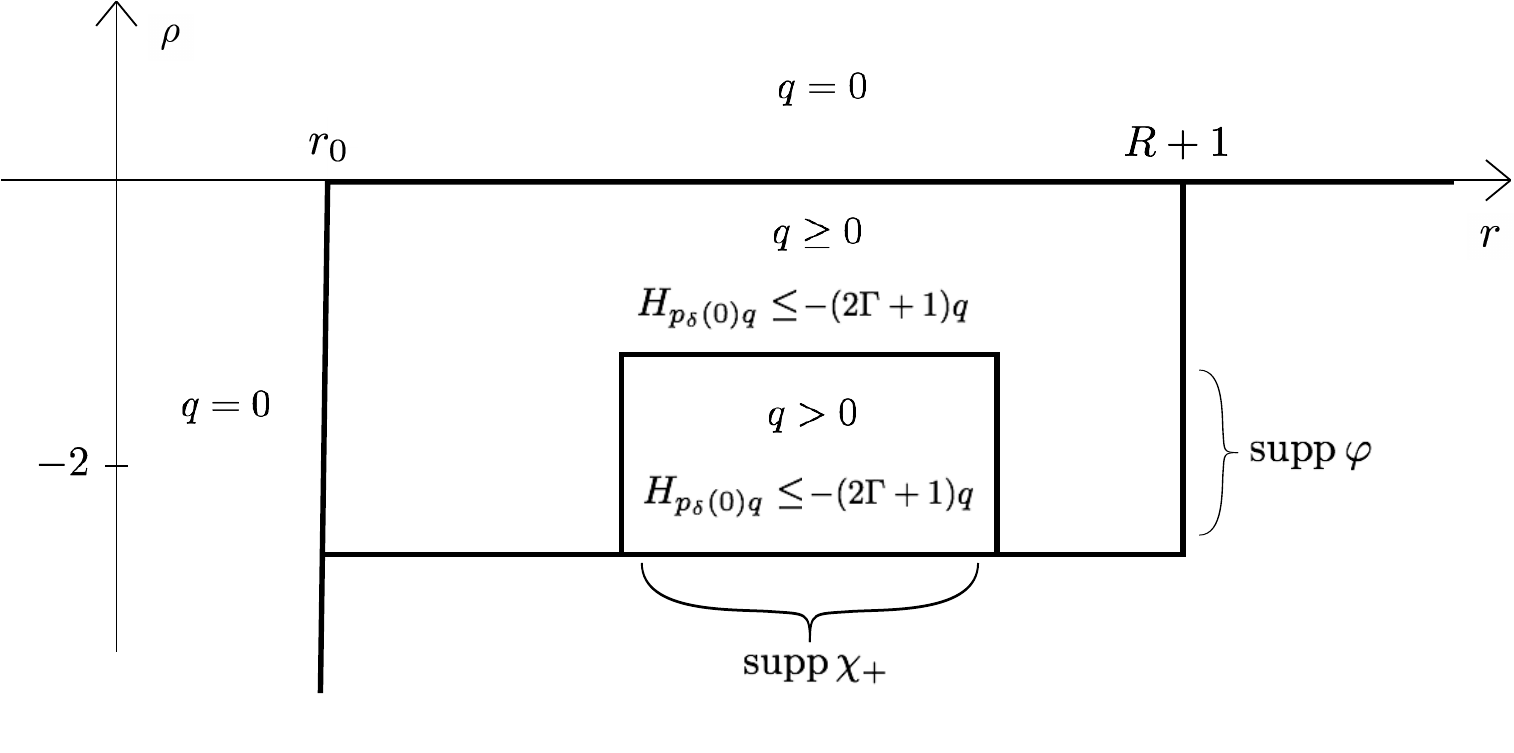}
\caption{The escape function $q$ used to prove propagation of singularities \eqref{e:modelprop0} in the case $\alpha = 0$. The derivative along the flowlines $H_{p_\delta(0)}q$ is negative and provides ellipticity for our positive commutator argument near $\{r \in \supp \chi_+, \rho \in \supp \varphi\}$. We allow $H_{p_\delta(0)}q > 0$ (the unfavorable sign for us) only in $\{r > R+1\}$ and in $\{\rho < -2\}$, because in this region $p_\delta(0)$ is elliptic.}
\label{f:p0prop}
\end{figure}

We will show that if $\|A_0u\|_{L^2(\R)} \le C h^k$ for $A_0 \in \Psi^0(\R)$ with full symbol supported sufficiently near $\supp q$ and for some $k \in \R$, then $\|A_1 u\|_{L^2(\R)} \le C h^{k + 1/2}$ for $A_1 \in \Psi^0(\R)$ with full symbol supported sufficiently near $\{ r \in \supp \chi_+, \rho \in \supp \varphi\}$. The conclusion \eqref{e:modelprop0conc} then follows by induction. (The base step of the induction follows from \eqref{e:modelbound0020} or even from \eqref{e:modelbound001}.)

In the remainder of the proof all norms and inner products are in $L^2(\R)$ and we omit the subscript for brevity.

We write
\[
H_{p_\delta(0)} q^2 = -b^2 + e,
\]
where $b,e \in C_0^\infty(T^*\R)$, $b > 0$ near $ \{ r \in \supp \chi_+, \rho \in \supp \varphi, -2 \le \rho\}$, $b^2 \ge (2\Gamma+1)q^2$ everywhere, and $\supp e \cap (\{ r  \le R+1, \rho \ge -2\} \cup \{r \le r_0\})= \varnothing$. Let $Q,B,E$ be quantizations of $q,b,e$ respectively. Then
\[
i[P_\delta(0),Q^*Q] = - hB^*B + hE + h^2F,
\]
where $F \in \Psi^0(\R)$ has full symbol supported in $\supp q$. From this we conclude that
\[
\|Bu\|^2 = - \frac2 h  \im \la Q^*Q(P_\delta(0) - \lambda)u,u\ra - \frac 2 h \im \lambda\|Q u\|^2+ \la E u, u \ra + h \la Fu,u\ra + \Oh(h^\infty)\|u\|^2.
\]
From $(P_\delta(0) - \lambda)u = f$ and $\WF'_h Q \cap T^*\supp f = \varnothing$ it follows that the first term is $\Oh(h^\infty)\|u\|^2$. Similarly $\WF'_h E \cap (\supp f\cup p_\delta^{-1}(0)) = \varnothing$ implies by \eqref{ellipestrn} that the third term is $\Oh(h^\infty)\|u\|^2$. The fourth term is bounded by $C h^{2k+1}\|u\|^2$ by inductive hypothesis, giving
\[
\|Bu\|^2 \le  2\Gamma \|Q u\|^2 + C h^{2k+1}\|u\|^2.
\]
By \eqref{gardingrn} we have
\[
\la(B^*B - (2\Gamma +1)Q^*Q)u,u\ra \ge -Ch\|Ru\|^2,
\]
where $R \in \Psi^{0,0}_0(\R)$ is microsupported in an arbitrarily small neighborhood of $\WF'_hQ$. Hence $\|Ru\| \le Ch^k \|u\|$ and we have
\[
\|Qu\|^2 \le  C h^{2k+1}\|u\|^2,
\]
completing the inductive step and also the proof.
\end{proof}

\subsection{The case $\alpha \ge \lambda_1h$.}

Propositions \ref{p:modelbound} and \ref{p:modelprop} follows from \eqref{e:modelbound0},  \eqref{e:modelprop0} and the following two Lemmas.

\begin{lem}\label{l:modelbound}
For any $E \in (0,1)$ there is $C_0>0$ such that for any $M, \lambda_1>0$ there are $h_0,C>0$  such that if $h \in (0,h_0], \alpha \ge \lambda_1 h$, $\lambda \in [-E,E] +i [-Mh\log\log(1/h),\infty)$, then
\begin{equation}\label{e:modelbound2}
\left\|(P(\alpha) - \lambda)^{-1}\right\|_{L^2(\R) \to H^2_h(\R)} \le C \log(1/h) h^{-1-C_0 |\im \lambda| /h}. \end{equation}
If $\chi \in C^\infty(\R)$ has $\chi' \in C_0^\infty(\R)$ and $\chi(r) = 0$ for $r$ sufficiently negative, then
\begin{equation}\label{e:modelbound2b}
\left\|\chi(P(\alpha) - \lambda)^{-1}\chi\right\|_{L^2(\R) \to H^2_h(\R)} \le C h^{-1-2C_0 |\im \lambda| /h}\end{equation}
in the same range of $h,\alpha,\lambda$, and with the same $C_0$ and $h_0$ (but with different $C$).
\end{lem}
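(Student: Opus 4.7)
The plan is to transfer $P(\alpha)$ to a rescaled coordinate in which the classical flow becomes asymptotically $\alpha$-independent, and then adapt the complex-scaling/escape-function/interpolation machinery of \S\ref{s:rk} and Lemma~\ref{l:p0}. I substitute $s = r - \log\alpha$, under which $P(\alpha)$ becomes
\[
\widetilde P(\alpha) = h^2 D_s^2 + e^{-2s - 2\beta(s + \log\alpha)} - 1 + h^2 V(s + \log\alpha) - i W_C(s + \log\alpha),
\]
with semiclassical symbol $p_\alpha(s, \rho) = \rho^2 + e^{-2s - 2\beta(s + \log\alpha)} - 1$. The turning point on $\{p_\alpha = 0\}$ sits near $s \approx -\beta(-\infty)$, while the damping cutoff $\{W_C(s + \log\alpha) = 1\}$ becomes $\{s \gtrsim -\log\alpha\}$. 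A trajectory at energy in $[-E_0, E_0]$ for some $E_0 \in (E, 1)$ enters from $s = +\infty$ with $\rho \approx -1$, bounces at the turning point, and reaches the damping region with $\rho \approx +1$ in time $T_\alpha = O(\log(1/\alpha)) = O(\log(1/h))$.

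I then deform the contour at $s \to +\infty$ via $s \mapsto s + i\delta\gamma(s)$ with $\gamma'(s) = \tan\theta_0$ for $s \geq R+1$, producing a complex-scaled operator $\widetilde P_\delta(\alpha)$ agreeing with $\widetilde P(\alpha)$ in the sense of \eqref{e:p0agree} and semiclassically elliptic outside a bounded neighborhood of $\{p_\alpha = 0\}$. The deformation is admissible because $\beta$ is holomorphic in sectors by \S\ref{s:assumptions}; no scaling is needed as $s \to -\infty$, where the barrier $e^{-2s}$ itself yields coercivity of the symbol. Using the one-dimensional structure of the flow I construct an escape function $q \in C_0^\infty(T^*\R)$ supported in a neighborhood of $p_\alpha^{-1}([-E_0, E_0]) \cap \{W_C(s + \log\alpha) = 0\}$ with $H_{p_\alpha} q \leq -1$ there, by integrating $-1$ along each flowline from the turning point to the damping region. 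Because this transit takes time $T_\alpha$, one has $\|q\|_{L^\infty} = O(\log(1/h))$ and hence $\|Q\|_{L^2 \to L^2} = O(\log(1/h))$ for any quantization $Q = \Op(q)$.

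With $\varepsilon = \varepsilon(h)$ to be chosen, set $\widetilde P_{\delta, \varepsilon}(\alpha) = e^{\varepsilon Q/h} \widetilde P_\delta(\alpha) e^{-\varepsilon Q/h}$. The principal symbol now has imaginary part $\leq -\varepsilon/C$ on $p_\alpha^{-1}([-E_0, E_0])$ near $\{W_C = 0\}$, so a sharp-G\aa rding/elliptic argument copied from \eqref{e:pkepsest}--\eqref{e:pkupgrade} yields $\|(\widetilde P_{\delta, \varepsilon}(\alpha) - E')^{-1}\|_{L^2(\R) \to H^2_h(\R)} \leq C/\varepsilon$ for $E' \in [-E_0, E_0]$. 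Conjugating back through \eqref{e:expest} introduces a factor $e^{C \varepsilon \|Q\|/h}$, which stays sub-polynomial in $h$ precisely when $\varepsilon \lesssim Mh \log\log(1/h)$; choosing $\varepsilon$ of that size gives a strip estimate on $\{|\re \lambda| \leq E_0,\, |\im \lambda| \leq Mh\log\log(1/h)\}$. The complex-interpolation argument of \S\ref{s:rk} (against the trivial bound $\|(\widetilde P(\alpha) - \lambda)^{-1}\| \leq 1/\im \lambda$ valid for $\im\lambda > 0$) then upgrades this to \eqref{e:modelbound2}.

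For \eqref{e:modelbound2b}, the hypothesis on $\chi$ means in the rescaled coordinate that $\chi(s + \log\alpha)$ is supported in $\{s \geq r_0 - \log\alpha\}$, far from the turning point and inside the complex-scaled region. A propagation-of-singularities estimate analogous to Proposition~\ref{p:rkprop}, applied to $\widetilde P_\delta(\alpha)$, transfers the cutoff resolvent from the damping region back toward the turning point along the nontrapping flow; since this traversal is needed both on entry and on exit, it doubles the exponent in the loss from $C_0$ to $2C_0$ but removes the need to deploy the full escape function $Q$ in the gluing step, eliminating the $\log(1/h)$ prefactor. The main obstacle throughout is the $\log(1/h)$-size of $\|Q\|_{L^2 \to L^2}$, forced by the $\log(1/\alpha)$ transit time: were $\|Q\|$ bounded (as in \S\ref{s:rk}), a polynomial-loss strip estimate would hold up to $\im \lambda = -Mh\log(1/h)$, but here both the narrower strip width $h\log\log(1/h)$ in the hypothesis and the $\log(1/h)$ prefactor in the conclusion \eqref{e:modelbound2} reflect this intrinsic cost of the cusp dynamics, and the $|\im\lambda|/h$ loss in the exponent is itself genuine by the lower bound \eqref{e:low}.
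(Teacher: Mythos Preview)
Your approach has a genuine gap in the key conjugation step. With the translation $s = r - \log\alpha$, the distance from the turning point (near $s\approx 0$) to the damping region $\{W_C(s+\log\alpha)=1\}=\{s\ge -\log\alpha\}$ is $\log(1/\alpha)=O(\log(1/h))$, exactly as in the original coordinate; the translation gains nothing. Hence any escape function $q$ with $H_{p_\alpha}q\le -1$ on that stretch must have $\|q\|_{L^\infty}$, and therefore $\|Q\|_{L^2\to L^2}$, of order $\log(1/h)$, as you correctly note. But then the conjugation bound \eqref{e:expest} gives $\|e^{\eps Q/h}\|\le e^{(\eps/h)\|Q\|}$, which is $\le h^{-N}$ only when $\eps\|Q\|/h=O(\log(1/h))$, i.e.\ $\eps=O(h)$, not $\eps=O(h\log\log(1/h))$ as you assert. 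With $\eps=O(h)$ the crude strip bound is $h^{-N}$ with $N\sim M\|Q\|\sim M\log(1/h)$, and running the interpolation of \S\ref{s:rk} then yields an exponent $C_0=N/M\sim\log(1/h)$ rather than a fixed constant; the resulting estimate is $h^{-1-C\log(1/h)|\im\lambda|/h}$, strictly worse than \eqref{e:modelbound2}. (Separately, the complex scaling you introduce at $s\to+\infty$ is unnecessary: the absorbing term $-iW_C$ already makes the symbol elliptic there.)

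The paper avoids this by a different rescaling, namely the dilation $\tilde r = r/\log(2\alpha_0/\alpha)$, $\tilde h = h/\log(2\alpha_0/\alpha)$. This compresses the stretch between the turning point and the damping onset to a region of \emph{bounded} $\tilde r$-size, so the escape function in the $\tilde h$-calculus has $\|Q\|=O(1)$ uniformly in $\alpha$. The standard argument then gives $\|R_B\|\le C\tilde h^{-1}e^{C_0|\im\lambda|/\tilde h}$ with fixed $C_0$, and translating back (using $\tilde h^{-1}\le C h^{-1}\log(1/h)$ and $1/\tilde h\le \log(1/h)/h$) produces precisely the $\log(1/h)$ prefactor and the $h^{-C_0|\im\lambda|/h}$ loss in \eqref{e:modelbound2}. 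A secondary gluing (splitting the unbounded potential into an elliptic piece $P_E$ and a bounded piece $P_B$) is needed so that $P_B$ lies in a genuine symbol class; your sketch has no analogue of this. For \eqref{e:modelbound2b} the paper does not use propagation of singularities as you suggest, but rather a direct positive-commutator argument (a noncompact variant of the method of \cite{Datchev-Vasy:Propagation}, with an escape function constant for $r\le -R_0$) to obtain the lossless $C/h$ bound on $\{\im\lambda\ge 0\}$, and then interpolates against \eqref{e:modelbound2}.
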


\begin{lem}
Let $r_0 <0$, $\chi_- \in C_0^\infty((-\infty,r_0))$, $\chi_+ \in C_0^\infty((r_0,\infty))$, $\varphi \in C_0^\infty((-\infty,0))$, $E \in (0,1)$, $\Gamma, \lambda_1, N>0$ be given. Then there exists $h_0>0$ such that
\begin{equation}\label{e:modelpropa}
\left\|\varphi(hD_r)\chi_+(r)(P(\alpha) - \lambda)^{-1}\chi_-(r)\right\|_{L^2(\R) \to H^2_h(\R)} = \Oh(h^\infty),
\end{equation}
uniformly for $\alpha \ge \lambda_1 h$, $\re \lambda \in [-E,E], \, -\Gamma h \le \im \lambda \le h^{-N}$, $h \in (0,h_0]$.
\end{lem}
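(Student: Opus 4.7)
The plan is to follow essentially the same positive commutator / induction scheme that proved \eqref{e:modelprop0} in the $\alpha=0$ case, but with an escape function and decomposition chosen so that all constants are uniform in $\alpha \in [\lambda_1 h,\infty)$. The dynamical input we exploit is that along the Hamilton flow of $p(\alpha) = \rho^2 + \alpha^2 e^{-2(r+\beta(r))}-1$ the momentum satisfies $\dot\rho = 2(1+\beta'(r))\alpha^2 e^{-2(r+\beta(r))} \ge 0$, by \eqref{e:betahalf}, uniformly in $\alpha \ge 0$. Combined with $\dot r = 2\rho$, this rules out any forward bicharacteristic starting in $T^*\supp\chi_- \subset \{r<r_0\}$ ever entering the target region $\{r \in \supp\chi_+,\ \rho \in \supp\varphi\} \subset \{r>r_0,\ \rho<0\}$: once $\rho\ge 0$ has been attained it can never drop below zero, and $r$ can reach $\supp\chi_+$ only after the turning point $\rho=0$.

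Accordingly I take $q(r,\rho) = \varphi_r(r)\,\varphi_\rho(\rho)$ with $\varphi_r \in C_0^\infty((r_0,\infty);[0,\infty))$ satisfying $\varphi_r'\ge c\,\varphi_r$ on a neighborhood of $\supp\chi_+$ (for $c$ large depending on $\Gamma$), and $\varphi_\rho \in C_0^\infty((-M_1,-\rho_1/2);[0,\infty))$ nonvanishing on $\supp\varphi$, with $\varphi_\rho'\le 0$ on the upper transition $[-\rho_1,-\rho_1/2]$ and $\varphi_\rho'\ge 0$ only on the lower transition $[-M_1,-M_1+1]$. Here $\rho_1 = \tfrac{1}{2}\max(-\supp\varphi)>0$, and $M_1$ is chosen so large that $p(\alpha)-\lambda$ is uniformly semiclassically elliptic on $\{|\rho|\ge M_1-2\}$ for $|\re\lambda|\le E$ and $\alpha\ge 0$; this holds because $\rho^2\ge(M_1-2)^2$ dominates $1+E$ there. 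A direct computation gives
\[
H_{p(\alpha)}q^2 \,=\, 4\rho\,\varphi_r\varphi_r'\,\varphi_\rho^2 \;+\; 4(1+\beta'(r))\alpha^2 e^{-2(r+\beta(r))}\,\varphi_r^2\,\varphi_\rho\,\varphi_\rho'.
\]
On the plateau of $\varphi_\rho$ (where $\varphi_\rho'=0$ and $\rho\le-\rho_1/2$) the first term is bounded above by $-\rho_1 c\,q^2 \le -(2\Gamma+1)q^2$, while the $\alpha$-dependent second term has the favorable (nonpositive) sign wherever $\varphi_\rho'\le 0$; its only unfavorable contribution sits in the elliptic zone $\rho\in[-M_1,-M_1+1]$.

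I then run the argument of \eqref{e:modelprop0}: decompose $H_{p(\alpha)}q^2 = -b^2+e$ with $b,e\in C_0^\infty(T^*\R)$, $b^2\ge(2\Gamma+1)q^2$, and $\supp e$ contained in the elliptic zone together with a small set outside $\supp q$; quantize $q,b,e$ to $Q,B,E\in\Psi^{-\infty}(\R)$; and use the identity $i[P(\alpha),Q^*Q] = -hB^*B + hE + h^2F + R_\infty$ together with $(P(\alpha)-\lambda)u=\chi_- f$, the disjointness of the microsupport of $Q$ from $\supp\chi_-$, elliptic regularity on $\supp e$, and the hypothesis $\im\lambda\ge-\Gamma h$ to obtain
\[
\|Bu\|^2 \,\le\, 2\Gamma\,\|Qu\|^2 + Ch\,\|A_0 u\|^2 + \Oh(h^\infty)\|u\|^2,
\]
where $A_0\in\Psi^0(\R)$ is microsupported near $\supp q$. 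The sharp G\aa rding inequality \eqref{gardingrn} applied to the nonnegative symbol $b^2-(2\Gamma+1)q^2$ converts this into the inductive gain $\|Qu\|\le Ch^{k+1/2}\|u\|$ whenever an a priori bound $\|A_0 u\|\le Ch^k\|u\|$ is already in hand. The base step (some fixed $k=-N_0$) is supplied by \eqref{e:modelbound2}, which gives $\|u\|\le Ch^{-1-C_0\Gamma}\log(1/h)\|f\|$ uniformly in $\alpha\ge\lambda_1 h$ for $\im\lambda\in[-\Gamma h,0]$, together with the trivial absorption bound $\|u\|\le(\im\lambda)^{-1}\|f\|$ for $\im\lambda>0$. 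Iteration then yields \eqref{e:modelpropa}.

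The main obstacle is the uniformity in $\alpha$: the second summand of $H_{p(\alpha)}q^2$ grows like $\alpha^2$ and so dominates the first, $\alpha$-independent, term when $\alpha$ is large. The resolution is structural rather than quantitative: at every point where the second term has the wrong sign we are already in the elliptic zone of $P(\alpha)-\lambda$ uniformly in $\alpha\ge\lambda_1 h$, so that contribution is absorbed into $e$ and controlled by elliptic regularity, while everywhere else the extra term only reinforces the inequality. With this observation a single $\alpha$-independent escape function suffices for the whole range $\alpha\in[\lambda_1 h,\infty)$.
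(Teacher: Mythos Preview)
Your scheme is the right one and matches the paper's argument for bounded $\alpha$, but the claim that a single $\alpha$-independent positive commutator covers the whole range $\alpha\in[\lambda_1 h,\infty)$ has a real gap. The issue is not just that the second summand of $H_{p(\alpha)}q^2$ is large; it is that $P(\alpha)$ fails to lie in a uniform semiclassical calculus on $\supp q$ once $\alpha$ is unbounded. On $\supp\varphi_r\subset(r_0,\infty)$ the coefficient $\alpha^2 e^{-2(r+\beta(r))}$ and all its $r$-derivatives are of size $\alpha^2$, so the subprincipal remainder $h^2F$ in the identity $i[P(\alpha),Q^*Q]=-hB^*B+hE+h^2F$ has $\|F\|\sim\alpha^2$; after dividing by $h$ this contributes $Ch\alpha^2\|A_0u\|^2$, and the induction yields $\|Qu\|^2\le C\alpha^2 h^{2k+1}$ rather than $Ch^{2k+1}$. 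Your observation that the unfavorable part of the \emph{principal} symbol sits in an elliptic zone does not cure this, because the loss occurs one order down in the expansion. The paper therefore splits the range at a fixed $\alpha_0$ (chosen so that $\alpha^2 e^{-2(r+\beta)}\ge 3$ on $\{r\le 0\}$ when $\alpha\ge\alpha_0$): for $\alpha\ge\alpha_0$ the region containing $\supp\chi_-$ and $\supp\chi_+$ is classically forbidden and an Agmon estimate gives the stronger bound $\|\chi_+(P(\alpha)-\lambda)^{-1}\chi_-\|=\Oh(e^{-1/(Ch)})$; for $\lambda_1 h\le\alpha\le\alpha_0$ the coefficients are uniformly bounded on $\supp q$ and your positive commutator argument goes through (the paper routes it through the model $P_B$, but a direct argument as you wrote works just as well once $\alpha$ is bounded).

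Two smaller points. First, you have dropped the contribution of the absorbing barrier: $P(\alpha)$ contains $-iW_C(r)$, so the commutator produces an extra $[W_C,Q^*Q]$ term which must be estimated separately (as in the paper's proof of \eqref{e:modelbound2b}, using that $h^{-1}[W_C,Q^*]Q$ has imaginary principal symbol). Second, you need $\varphi_r'\ge c\varphi_r$ not merely near $\supp\chi_+$ but on all of $\{r\le 0\}\cap\supp\varphi_r$, so that the region where $\varphi_r'<0$ is pushed into $\{W_C=1\}$ and hence into the elliptic set; as written the right transition of $\varphi_r$ could occur in $\{W_C=0\}$.
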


Take $\alpha_0>0$ such that if $\alpha \ge \alpha_0$ and $r \le 0$ then $\alpha^2e^{-2(r+\beta(r))} \ge 3$.
We consider the cases $\lambda_1 h \le \alpha \le \alpha_0$ and $\alpha_0 \le \alpha$ separately.

\begin{proof}[Proof of \eqref{e:modelbound2}, \eqref{e:modelbound2b}, and \eqref{e:modelpropa} for $\alpha_0 \le \alpha$] In this case $P(\alpha)$ is `elliptic' (although not pseudodifferential in the usual sense because of the exponentially growing term $\alpha^2 e^{-2(r+\beta(r))}$) and  better estimates hold.
Use the fact that $W_C \ge 0$  and $\alpha^2 e^{-2(r+\beta(r))} \ge 3$ for $r \le 0$ to write
\begin{align*}
\int_{-\infty}^0 |u|^2 dr &\le \frac 13 \int_{-\infty}^\infty \alpha^2 e^{-2(r+\beta(r))} |u|^2 dr \le \frac 13 \re \langle P(\alpha)u,u\rangle_{L^2(\R)} + \left(\frac 13 + \Oh(h^2)\right) \|u\|_{L^2(\R)}^2, \\
\int_0^\infty |u|^2dr &= \int_0^\infty W_C |u|^2dr \le \int_{-\infty}^\infty W_C|u|^2 dr = -\im \langle P(\alpha) u,u\rangle_{L^2(\R)}.
\end{align*}
Adding the inequalities gives
\[
\|u\|^2_{L^2(\R)} \le 2 \|(P(\alpha) - \lambda)u\|_{L^2(\R)}  \|u\|_{L^2(\R)} +\left(\frac 13\re \lambda - \im \lambda+ \frac 13 + \Oh(h^2)\right)\|u\|^2_{L^2(\R)}, 
\]
So long as $\im \lambda - (1/3) \re \lambda + 2/3 \ge \epsilon$ for some $\epsilon>0$, it follows that
\begin{equation}\label{e:alphabig}
\|u\|_{L^2(\R)} \le C\|(P(\alpha) - \lambda)u\|_{L^2(\R)}.
\end{equation}
To obtain \eqref{e:modelbound2} we observe that
\[\begin{split}
\|h^2D_r^2u\|_{L^2(\R)}^2= &\|(h^2D_r^2 + \alpha^2e^{-2(r+\beta(r))}) u\|_{L^2(\R)}^2 - \|  \alpha^2e^{-2(r+\beta(r))}u\|_{L^2(\R)}^2 \\&- 2 \re \langle h^2 D_r^2 u,  \alpha^2e^{-2(r+\beta(r))} u\rangle_{L^2(\R)},
\end{split}\]
while 
\[\begin{split}
- \re \langle & h^2 D_r^2 u,  \alpha^2e^{-2(r+\beta(r))}  u\rangle_{L^2(\R)} = \\& - \|\alpha e^{-(r+\beta(r))}hD_ru\|_{L^2(\R)}^2 +2 \im \langle h D_r u,  (1+\beta'(r))h\alpha^2e^{-2(r+\beta(r))} u\rangle_{L^2(\R)},
\end{split}\]
so that
\[
\|h^2D_r^2u\|_{L^2(\R)} \le 2 \|(h^2D_r^2 + \alpha^2e^{-2(r+\beta(r))}) u\|_{L^2(\R)} \le 2\|(P(\alpha) - \lambda)u\|_{L^2(\R)} + C |\lambda| \|u\|_{L^2(\R)}. 
\]
Together with \eqref{e:alphabig}, this implies \eqref{e:modelbound2} (and hence \eqref{e:modelbound2b}) with the right hand side replaced by $C(1+|\lambda|)$. The estimate \eqref{e:modelpropa} follows from the stronger Agmon estimate
\[
\left\|\chi_+(r)(P(\alpha) - \lambda)^{-1}\chi_-(r)\right\|_{L^2(\R) \to H^2_h(\R)} = \Oh(e^{-1/(Ch)}),
\]
see for example \cite[Theorems 7.3 and 7.1]{ez}.
\end{proof}

\begin{proof}[Proof of \eqref{e:modelbound2} for $\lambda_1 h \le \alpha \le \alpha_0$]
For this range of $\alpha$ we use the following rescaling (I'm very grateful to Nicolas Burq for suggesting this rescaling):
\begin{equation}\label{e:tildevars}
\tilde r = r/ \log(2\alpha_0/\alpha), \qquad \tilde h = h/\log(2\alpha_0/\alpha).
\end{equation}
In these variables we have
\[
P(\alpha) =  (\tilde hD_{\tilde r})^2 + 4\alpha_0^2 e^{-2\left[(1+\tilde r)\log(2\alpha_0/\alpha) + \tilde \beta(\tilde r)\right]}+ \tilde h^2 \widetilde V(\tilde r) - 1 - i\widetilde W_C(\tilde r),
\]
where
\[
\tilde \beta(\tilde r) = \beta(r), \qquad \widetilde V(\tilde r) = \log(2\alpha_0/\alpha)^2 V(r), \qquad \widetilde W_C(\tilde r) =W_C(r).
\]
We will show that
\begin{equation}\label{e:modelbound2tilde0}
\|(P(\alpha) - \lambda)^{-1}\|_{L^2_{\tilde r} \to H^2_{h,\tilde r}} \le C \tilde h^{-1}e^{C_0 |\im \lambda| /\tilde h},
\end{equation}
for $|\re \lambda| \le E, \ \im \lambda \ge - M \tilde h \log (1/\tilde h)$, from which \eqref{e:modelbound2} follows.

We now use a variant of the gluing argument in \S\ref{s:glue} to replace the exponentially growing term $4\alpha_0^2 e^{-2\left[(1+\tilde r)\log(\alpha_0/\alpha) + \tilde \beta(\tilde r)\right]}$ with a bounded one.  Fix $\widetilde R >0$ such that  
\[
\tilde r \le- \widetilde R,\ \alpha \le \alpha_0 \Longrightarrow \alpha_0^2 e^{-2\left[(1+\tilde r)\log(2\alpha_0/\alpha) + \tilde \beta(\tilde r)\right]} > 1.
\]
Take $\widetilde V_B, \widetilde V_E \in C^\infty(\R, [0,\infty))$ such that $\widetilde V_E(\tilde r) = 4\alpha_0^2 e^{-2\left[(1+\tilde r)\log(2\alpha_0/\alpha) + \tilde \beta(\tilde r)\right]} $ for $\tilde r \le -  \widetilde R$ and $\widetilde V_E(\tilde r) \ge 4$ for all $\tilde r$, while $\widetilde V_B(\tilde r) = 4\alpha_0^2 e^{-2\left[(1+\tilde r)\log(2\alpha_0/\alpha) + \tilde \beta(\tilde r)\right]} $ for $\tilde r \ge- \widetilde R-3$ and is bounded, uniformly in $\alpha$, together with all derivatives (see figure \ref{f:vcve}).

\begin{figure}[htbp]
\includegraphics[width=12cm]{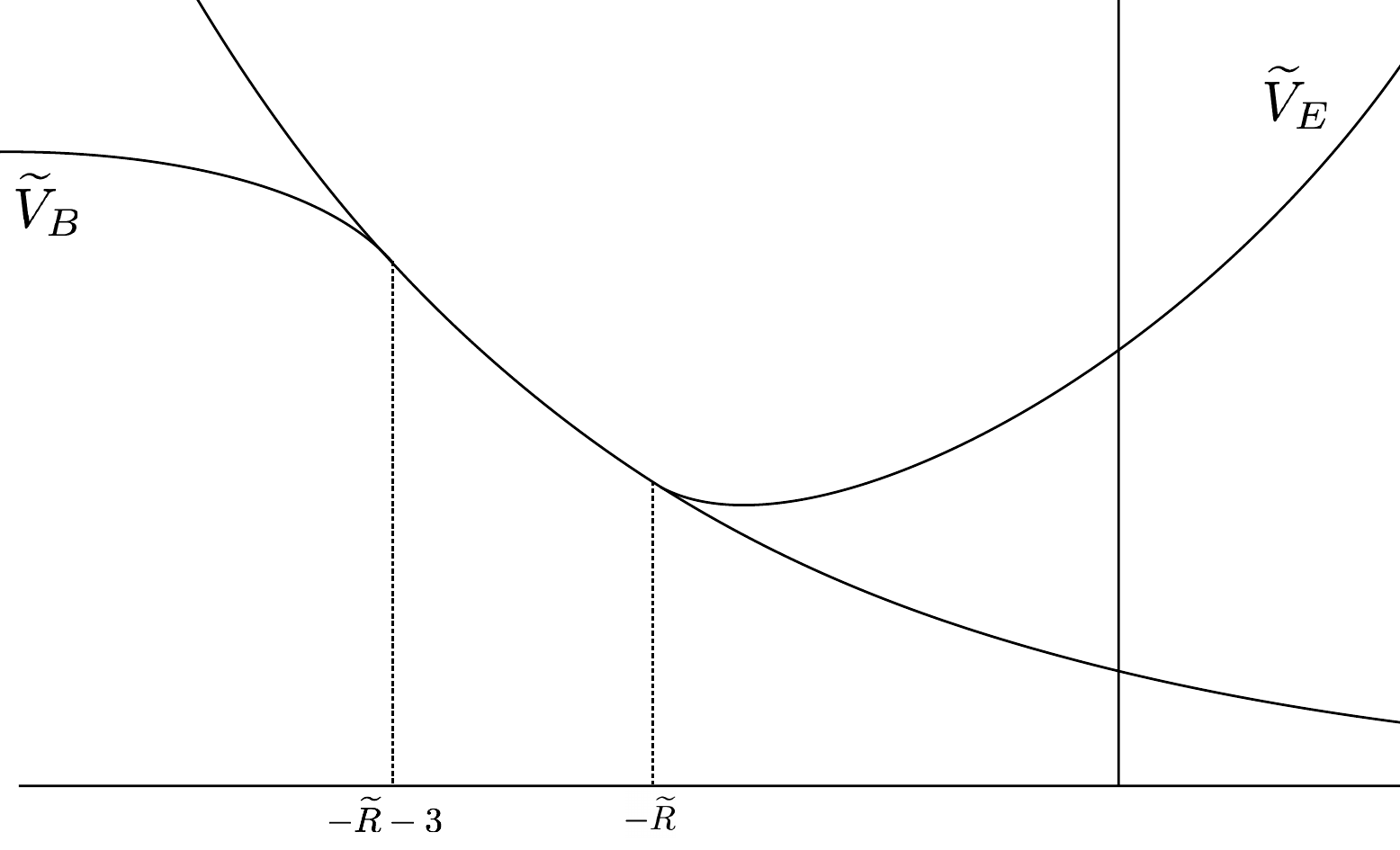}
\caption{The model potentials $\widetilde V_E$ and $\widetilde V_B$, which agree with $4\alpha_0^2 e^{-2\left[(1+\tilde r)\log(2\alpha_0/\alpha) + \tilde \beta(\tilde r)\right]}$ for $\tilde r \le- \widetilde R$  and  $\tilde r \ge - \widetilde R-3$ respectively.}\label{f:vcve}
\end{figure}

 Let
\[\begin{split}
P_E(\alpha) &=  (\tilde hD_{\tilde r})^2 + \widetilde V_E(\tilde r) + \tilde h^2 \widetilde V(\tilde r) - 1 - i\widetilde W_C(\tilde r), \\ P_B(\alpha) &=  (\tilde hD_{\tilde r})^2 + \widetilde V_B(\tilde r) + \tilde h^2 \widetilde V(\tilde r) - 1 - i\widetilde W_C(\tilde r),
\end{split}\]
and let $R_E = (P_E(\alpha)-\lambda)^{-1}$, $R_B = (P_B(\alpha)-\lambda)^{-1}$. Note that 
\[
\|R_E\|_{L^2_{\tilde r} \to H^2_{h,\tilde r}} \le C
\]
by the same proof as that of  \eqref{e:modelbound2} for $\alpha \ge \alpha_0$.
 We will show that \eqref{e:modelbound2tilde0} follows from
\begin{equation}\label{e:modelbound2tilde}
\|R_B\|_{L^2_{\tilde r} \to H^2_{h,\tilde r}} \le C \tilde h^{-1}e^{C_0 |\im \lambda| /\tilde h},
\end{equation}
for $|\re \lambda| \le E, \im \lambda \ge - M \tilde h \log (1/\tilde h)$. Indeed, let $\chi_E \in C^\infty(\R;\R)$ have $\chi_E(\tilde r) = 1$ near $\tilde r \le- \widetilde R-2$ and  $\chi_E(\tilde r) = 0$ near  $\tilde r \ge- \widetilde R-1$, and let $\chi_B = 1 - \chi_E$. Let
\[
G = \chi_E(\tilde r - 1) R_E \chi_E(\tilde r) +  \chi_B(\tilde r + 1) R_B \chi_B(\tilde r).
\]
Then
\[
(P(\alpha) - \lambda) G = \Id + [\tilde h^2D_{\tilde r}^2,\chi_E(\tilde r - 1)]  R_E \chi_E(\tilde r) +  [\tilde h^2D_{\tilde r}^2,\chi_B(\tilde r + 1)] R_B \chi_B(\tilde r) = \Id + A_E + A_B.
\]
As in \S\ref{s:glue} we have $A_E^2 = A_B^2 = 0$. We also have the Agmon estimate
\[
\|A_E\|_{L^2_{\tilde r} \to L^2_{\tilde r}} \le e^{-1/(C\tilde h)};
\]
see for example \cite[Theorems 7.3 and 7.1]{ez}. Solving away $A_B$ using $G$ we find that
\begin{equation}\label{e:palphapar}
(P(\alpha) - \lambda) G (\Id - A_B) = \Id + \Oh_{L^2_{\tilde r}\to L^2_{\tilde r}}(e^{-1/(C\tilde h)}),
\end{equation}
and since $\|G (\Id - A_B)\|_{L^2_{\tilde r} \to H^2_{\tilde h,\tilde r}} \le C \tilde h^{-1} e^{C|\im \lambda|/\tilde h}$, this implies \eqref{e:modelbound2tilde0}.

The proof of \eqref{e:modelbound2tilde} follows  that of \eqref{e:modelbound0} with these differences: the $-i\widetilde W_C(\tilde r)$ term removes the need for complex scaling, and the $\widetilde V_B(\tilde r)$ term  puts $P_B$ in a mildly exotic operator class and leads to a slightly modified escape function $q$ and microlocal cutoff $\phi$. Fix
\begin{equation}\label{e:e0cusp}
E_0 \in (E,1), \qquad \eps = 10M\tilde h \log(1/\tilde h).
\end{equation}
The $\tilde h$-semiclassical principal symbol of $P_B$ (note that $P_B \in \Psi^2_\delta(\R)$ for any $\delta>0$) is
\begin{equation}\label{e:pcsymb}
p_B = \tilde \rho^2 + \widetilde V_B(\tilde r) - 1 - i \widetilde W_C(\tilde r),
\end{equation}
where $\tilde \rho$ is dual  to $\tilde r$. Take $q \in C_0^\infty(T^*\R)$ such that on $\{-\widetilde R \le \tilde r \le 0, \ |\tilde \rho| \le 2\}$ we have
\[
q(\tilde r, \tilde \rho) = - C_q (\tilde r + \widetilde R+1) \tilde \rho, 
\]
\[
\re H_{p_B}q = -2C_q \tilde \rho^2 + C_q(\tilde r  + \widetilde R+1) \widetilde V'_B(\tilde r) \le -C_q (\re p_B + 1)
\]
where $C_q>0$ is a large constant which will be specified below, and where for the inequality we used \eqref{e:betahalf}.
Let $Q \in \Psi^{-\infty}(\R)$ be a quantization of $q$ with $\tilde h$ as semiclassical parameter and put
\begin{equation}\label{e:expexp}
P_{B,\eps} = e^{\eps Q/\tilde h} P_B e^{-\eps Q/ \tilde h} = P_B - \eps[P_B,Q/\tilde h] + \eps^2 \tilde h^{-4\delta} R,
\end{equation}
where $R \in \Psi_\delta^{-\infty}(\R)$ by \eqref{expexp}. The $\tilde h$-semiclassical principal symbol of $P_{B,\eps}$ is
\[
p_{B,\eps} = \tilde \rho^2 + V_B(\tilde r) - 1 - i \widetilde W_C(\tilde r) + i \eps H_{p_B} q
\]
We will prove
\begin{equation}\label{e:modelbouncd0eps}
\|(P_{B,\eps} -E')^{-1}\|_{L^2_{\tilde r} \to H^2_{\tilde h, \tilde r} } \le 5/\eps, \qquad E' \in [-E_0,E_0],
\end{equation}
from which it follows by \eqref{e:expest} that
\begin{equation}\label{e:modelbound00}
\|(P_{B,\eps} -\lambda)^{-1}\|_{L^2_{\tilde r} \to H^2_{\tilde h, \tilde r} }  \le \frac{\tilde h^{-N}}{M\log(1/\tilde h)}, \quad |\re \lambda| \le E_0, \ |\im \lambda| \le M \tilde h \log(1/\tilde h)
\end{equation}
where $N = 10M(\|Q\|_{H^2_{\tilde h, \tilde r} \to H^2_{\tilde h, \tilde r}} + \|Q\|_{L^2_{\tilde r} \to L^2_{\tilde r}}) + 1$. The proof that \eqref{e:modelbound00} implies \eqref{e:modelbound2tilde} is the same as the proof that \eqref{e:pkest1} implies \eqref{e:rkbound}.

Let $\phi \in C_0^\infty(T^*\R)$ be identically $1$ near $\{(\tilde r, \tilde \rho): -\widetilde R \le \tilde r \le 0,\ |\tilde \rho|\le2,\ |\re p_B(\tilde r, \tilde \rho)| \le E_0\}$ and be supported such that $\re H_{p_B}q <0$ on $\supp \phi$. Let $\Phi$ be the quantization of $\phi$ with $\tilde h$ as semiclassical parameter. For $h$ (and hence $\tilde h$ and $\eps$) small enough,  we have $|p_{B,\eps} - E'| \ge (1 + \tilde \rho^2)/C$ on $\supp (1-\phi)$, uniformly in $E' \in [-E_0,E_0]$, in $\alpha \le \alpha_0$ and in $h$. Hence, by the semiclassical elliptic estimate \eqref{ellipestrn},
\[
\|(\Id - \Phi) u\|_{H^2_{\tilde h, \tilde r}} \le C \|(P_{B,\eps} - E')(\Id - \Phi) u\|_{L^2_{\tilde r}} + \Oh(h^\infty)\|u\|_{H^{-N}_{\tilde h, \tilde r}}.
\]
Using the fact that $\re H_{p_B}q <0$ on $\supp\phi$, fix $C_q$ large enough that on $\supp\phi$  we have
\[
\im p_{B,\eps} = -  \widetilde W_C(\tilde r) + \eps \re H_{p_B} q \le - \eps.
\]
Then, using the sharp G\aa rding inequality \eqref{gardingrn}, we have, for $h$ sufficiently small,
\[\begin{split}
\|\Phi u\|_{L^2_{\tilde r}(\R)} \|(P_{B,\eps} - E')\Phi u\|_{L^2_{\tilde r}(\R)} &\ge - \la\im (P_{B,\eps} - E')\Phi u, \Phi u \ra_{L^2_{\tilde r}(\R)}  \\
& \ge \eps \|\Phi u\|_{L^2_{\tilde r}(\R)}^2 - C \tilde h^{1-2\delta} \|u\|^2_{H^{1/2}_{\tilde h, \tilde r}(\R)}.
\end{split}\]
We deduce \eqref{e:modelbouncd0eps} from this just as we did \eqref{e:pkepsest} above.
\end{proof}

\begin{proof}[Proof of \eqref{e:modelbound2b} for $\lambda_1 h \le \alpha \le \alpha_0$.]
It suffices to show that
\begin{equation}\label{e:modelboundcomp}
\left\|\chi R_B \chi\right\|_{L^2_r \to H^2_{h,r}} \le C/h, \end{equation}
when $|\re \lambda| \le E_0, \,\im \lambda \ge 0$, with $R_B$ as in the proof of \eqref{e:modelbound2} for $\lambda_1 h \le \alpha \le \alpha_0$, $E_0$ as in \eqref{e:e0cusp}\footnote{Note that for this proof we do not use the variables $\tilde r$ and $\tilde h$.}. Then $\left\|\chi P(\alpha) - \lambda)^{-1} \chi\right\|_{L^2_r \to H^2_{h,r}} \le C/h$ (for the same range of parameters) follows by the same argument that reduced \eqref{e:modelbound2}  to \eqref{e:modelbound2tilde} above. After this, \eqref{e:modelbound2b}  follows by complex interpolation as in  the proof that \eqref{e:pkest1} implies \eqref{e:rkbound} above. Indeed,  take $f(\lambda,h)$ holomorphic in $\lambda$, bounded uniformly for $\lambda \in \Omega = [-E_0,E_0]  + i [-Mh\log\log(1/h),0]$, and satisfying
\[
|\re \lambda| \le E \Rightarrow |f| \ge 1, \qquad |\re \lambda| \le [(E+E_0)/2,E_0] \Rightarrow |f| \le h^2
\]
for $\lambda \in \Omega$. Then define the subharmonic function
\[
g(\lambda,h) = \log\|\chi (P(\alpha) - \lambda)^{-1}\chi\|_{L^2_r \to H^2_{h,r}} + \log|f(\lambda,h)| + 2C_0 \frac{\im \lambda} h \log(1/h),
\]
and apply the maximum principle to $g$ on $\Omega$, observing that $g \le C + \log(1/h)$ on $\D \Omega$.

 It now remains to prove \eqref{e:modelboundcomp}, which we do using a `non-compact' variant of the positive commutator method of \cite{Datchev-Vasy:Propagation}. Fix $-R_0 < \inf \supp \chi$ and take $f \in L^2_r$ with  $\supp f \subset (-R_0,\infty)$. Let $u = R_B f$. We will show that $\|\chi u\|_{H^2_{h,r}} \le C\|f\|_{L^2_r}/h$.
 
As an escape function take $q \in S^0(\R)$ with $q\ge 0$ everywhere and such that
\[
q(r,\rho) = \begin{cases} 1 + 2 R_0e^{-1/R_0}, & -R_0 \ge r, \\ 1 + 2 R_0e^{-1/R_0} -\rho (r+R_0+1) e^{-1/(r+R_0)}, & -R_0 < r \le 0 \textrm{ and } |\rho|\le2. \end{cases}
\]
We do not prescribe additional conditions on $q$ outside of this range of $(r,\rho)$, as $P_B$ is semiclassically elliptic there. The $h$-semiclassical principal symbol of $P_B$ is (see \eqref{e:pcsymb})
\[
p_B = \rho^2 +  V_B( r) - 1 - i W_C( r),
\]
where $V_B(r) = \widetilde V_B(\tilde r)$. 
Making $-\widetilde R$ more negative if necessary, we may suppose without loss of generality that
\[
r \ge -R_0 \Longrightarrow V_B(r) = \alpha^2 e^{-2(r + \beta(r))}.
\]
For $r \le -R_0$ we have $H_{p_B} q = 0$,  and for $-R_0 < r \le 0$, $|\rho| \le 2$ we have
\[\begin{split}
\re H_{p_B} q(r,\rho) &= \left[-2\rho^2 (1 + 1/(r+R_0) ) + V'_B(r)(r+R_0+1) \right]e^{-1/(r+R_0)} \\
&\le -(\re p_B +1)e^{-1/(r+R_0)}.
\end{split}\]
Consequently we may write
\[
\re H_{p_B} (q^2) = -b^2 + a,
\]
where $a,b \in C^\infty_0(T^*\R)$ and $\supp a$ is disjoint from $\{r \le -R_0\}$ and from $\{-R_0<r\le 0\} \cap \{|\rho|\le2\}$. Note that 
\begin{equation}\label{e:bne0pc}
b \ne 0 \textrm{ on } \{|p_B| \le E_0\} \cap T^*(-R_0,0).
\end{equation}
Let $Q = \Op(q)$ as in \eqref{quantdef}.
Then
\begin{equation}\label{e:pcposcom}
i[P_B,Q^*Q] = -h B^*B + hA + [W_C,Q^*Q] + h^2Y,
\end{equation}
where $B,A,Y \in \Psi^{-\infty}(\R)$ and $B,A$ have semiclassical principal symbols $b,a$. Note that if $\chi_0 \in C_0^\infty((-R_0,\infty))$, then by \eqref{e:bne0pc} and \eqref{ellipestrn} we have
\begin{equation}\label{e:chi0pc}
\| \chi_0 u\|^2_{H^2_{h,r}} \le C (\|Bu\|^2_{L^2_r} + \log^2(1/h)\|f\|^2_{L^2_r}),
\end{equation}
so it suffices to show that
\begin{equation}\label{e:bpc}
\|Bu\|^2_{L^2_r} \le  C h^{-2} \|f\|^2_{L^2_r}.
\end{equation}
Combining \eqref{e:pcposcom} with
\[
 \langle i[P_B,Q^*Q]u,u\rangle_{L^2_r}  
 = -2\im\langle Q^*Qu,f\rangle_{L^2_r} +2\langle W_C Q^*Q u,u\rangle_{L^2_r} + 2 \im \lambda \|Q u\|^2_{L^2_r}
\]
gives
\begin{equation}\label{e:bpcterms}\begin{split}
\|Bu\|^2_{L^2_r} &= \langle Au,u\rangle_{L^2_r} + \frac 2 h\im\langle Q^*Qu,f\rangle_{L^2_r} - \frac 1 h\langle (W_C Q^*Q  + Q^*Q W_C)u,u\rangle_{L^2_r} \\
&\qquad\qquad - \frac{2 \im \lambda}h \|Q u\|^2_{L^2_r} + h \langle Y u,u\rangle_{L^2_r}.
\end{split}\end{equation}
We now estimate the right hand side term by term to obtain \eqref{e:bpc}. Since $P_B - \lambda$ is semiclassically elliptic on $\supp a$, by \eqref{ellipestrn} followed by \eqref{e:modelbound2} we have
\[
|\langle Au,u\rangle_{L^2_r}| \le C \|f\|_{L^2_r}^2 + Ch^2 \|u\|^2_{L^2_r} \le C\log^2(1/h) \|f\|_{L^2_r}^2.
\]
For any $\epsilon>0$ and $\chi_1 \in C_0^\infty(\R)$ with $\chi_1 = 1$ near $\supp f$ we have
\[
\frac 2 h\im\langle Q^*Qu,f\rangle_{L^2_r} \le \epsilon \| \chi_1 u\|^2_{L^2_r} + \frac C{h^2\epsilon} \|f\|^2_{L^2_r}. 
\]
By \eqref{e:bne0pc} and the elliptic estimate \eqref{ellipestrn}, if further $\inf \supp \chi_1 >-R_0$, then \eqref{e:chi0pc} gives
\[
\frac 2 h\im\langle Q^*Qu,f\rangle_{L^2_r} \le C \epsilon \| B u\|^2_{L^2_r} + \frac C{h^2\epsilon} \|f\|^2_{L^2_r}.
\]
Next we have, using $W_C \ge 0$ and  the fact that $h^{-1}[W_C, Q^*]Q$ has imaginary principal symbol, 
 followed by \eqref{e:modelbound2},
\[\begin{split}
- \frac 1 h\langle (W_C Q^*Q  + Q^*Q W_C)u,u\rangle_{L^2_r} &= - \frac 2 h \langle W_C Qu,Qu\rangle_{L^2_r} + \frac 2 h \re \langle [W_C, Q^*]Q u,u\rangle_{L^2_r}\\
& \le C h\|u\|^2_{L^2_r} \le C \frac{\log^2(1/h)}h\|f\|_{L^2_r}^2.
\end{split}\]
Finally we observe that $- 2 \im \lambda \|Q u\|^2_{L^2_r}/h \le 0$ since $\im \lambda \ge 0$, while \eqref{e:modelbound2} implies
\[
h \langle Y u,u\rangle_{L^2_r} \le C \frac{\log^2(1/h)}h\|f\|_{L^2_r}^2.
\]
This completes the estimation of \eqref{e:bpcterms} term by term, giving \eqref{e:bpc}.
\end{proof}

\begin{proof}[Proof of \eqref{e:modelpropa} for $\lambda_1 h \le \alpha \le \alpha_0$.] We begin this proof with the same rescaling to $\tilde r$ and $\tilde h$, and the same parametrix construction as for the proof of \eqref{e:modelbound2}  for $\lambda_1 h \le \alpha \le \alpha_0$ above, but with the additional requirement that
\[
-\widetilde R \le r_0/\log 2.
\]
Then if we put
\[
\widetilde \chi_+(\tilde r) = \chi_+(r), \qquad \widetilde \chi_-(\tilde r) = \chi_-(r),
\]
we have 
\[
\supp \widetilde \chi_+ \subset (r_0/\log(2\alpha_0/\alpha),\infty) \subset (r_0/\log2,\infty), \quad \supp \chi_E \subset (-\infty,-\widetilde R -1) ,
\]
 and hence
\begin{equation}\label{e:chipchie}
\widetilde\chi_+(\tilde r) \chi_E(\tilde r -1 )=0.
\end{equation}
Then, noting that \eqref{e:palphapar} implies
\[
(P(\alpha)-\lambda)^{-1} = G(\Id-A_B)(\Id + \Oh_{L^2_{\tilde r} \to L^2_{\tilde r} }(e^{-1/(C\tilde h)})),
\]
we use \eqref{e:chipchie} to write
\[
\widetilde\chi_+(\tilde r)(P(\alpha)-\lambda)^{-1} \widetilde\chi_-(\tilde r) = \widetilde\chi_+(\tilde r) R_B \widetilde\chi_-(\tilde r) + \Oh_{L^2_{\tilde r} \to H^2_{\tilde h, \tilde r} }(e^{-1/(C\tilde h)}). 
\]
Returning to the $r$ and $h$ variables, we see that  it suffices to show that
\begin{equation}\label{e:modelproppb}
\|\varphi(h D_{ r}) \chi_+( r) R_B\chi_-( r)\|_{L^2_{ r} \to H^2_{ h,  r}} = \Oh( h^\infty).
\end{equation}
The proof of \eqref{e:modelproppb} is almost the same as that of \eqref{e:modelprop0}. There are two differences.

The first difference is that as an escape function we use
\[
q = \varphi_r( r)\varphi_\rho( \rho), \qquad 
\re H_{p_B} q= 
2 \rho\varphi'_r( r)\varphi_\rho( \rho) -  V'_C( r) \varphi_r'( r)\varphi'_\rho( \rho),
\]
where $\varphi_r \in C_0^\infty(\R;[0,\infty))$ with $\supp \varphi_r \subset (r_0,\infty)$, $\varphi_r' \ge 0$ near $[r_0,0]$, $\varphi_r' > 0$ near $\supp \chi_+$. Take $\varphi_\rho \in C_0^\infty(\R;[0,\infty))$ with $\supp \varphi_\rho \subset (-\infty,0)$, $\varphi_\rho' \le 0$ near $[-2,0]$, $\varphi_\rho \ne 0$ near $\supp \varphi \cap [-2,0]$.  Impose further that $\sqrt\varphi_r, \sqrt\varphi_\rho \in C_0^\infty(\R)$, and that $\varphi'_r \ge c \varphi_r$ for $r \le 0$, where $c >0$ is chosen large enough that $\re H_{p_B} q \le -(2\Gamma +1)q$ on $\{r \le 0, \rho \ge -2\}$. 

The second difference is that the complex absorbing barrier $W_C$ produces a remainder term in the positive commutator estimate, analogous to the one in the proof of  \eqref{e:modelbound2b} for $\lambda_1 h \le \alpha \le \alpha_0$ above. The same argument removes the remainder term in this case.
\end{proof}

\section{Model operator in the funnel}\label{s:funnel}

Take $W_F \in C^\infty(\R;[0,1])$ nonincreasing with $W_F(r) = 0$ near $r \ge R_g$, $W_F(r) = 1$ near $r \le 0$, and let
\[
P_F = h^2D_r^2  + (1-W_F(r))e^{-2(r+\beta(r))} \Delta_{S_+} + h^2V(r) - 1 - iW_F(r),
\]
with notation as in \S\ref{spectrum}. 

\begin{prop} \label{p:modelboundf}
For every $\chi \in C_0^\infty(X)$, $E \in (0,1)$, there is $C_0>0$ such that for any $M>0$, there are $h_0,C>0$  such that the cutoff resolvent $\chi R_F(\lambda) \chi$ continues holomorphically from $\{\im \lambda>0\}$ to  $\{|\re \lambda| \le  E$, $-Mh\log(1/h) \le \im \lambda\}, \, h \in (0,h_0]$, where it satisfies
\begin{equation}\label{e:modelboundf}\left\|\chi R_F(\lambda) \chi\right\|_{L^2_\varphi(X) \to H^2_{\varphi,h} (X)} \le C \begin{cases}
h^{-1} + |\lambda|, \qquad & \im \lambda > 0 \\
h^{-1}e^{C_0 |\im \lambda|/h}, \qquad &\im \lambda \le 0,
\end{cases}. \end{equation}
\end{prop}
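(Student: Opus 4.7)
The plan is to parallel the cusp analysis of Section 4. First, separate variables over the eigenspaces of $\Delta_{S_+}$ to reduce the problem to uniform-in-$\alpha$ estimates on the one-dimensional family
\[ P(\alpha) = h^2 D_r^2 + \alpha^2 (1-W_F(r)) e^{-2(r+\beta(r))} + h^2 V(r) - 1 - iW_F(r), \qquad \alpha \in \{0\} \cup [\lambda_1 h, \infty), \]
where $\lambda_1^2$ is the smallest positive eigenvalue of $\Delta_{S_+}$. Then \eqref{e:modelboundf} follows from the corresponding estimate on $(P(\alpha)-\lambda)^{-1}$ with constants independent of $\alpha$.

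The main tool is complex scaling in the funnel direction. Pick $R > R_g$ large enough that $\chi$ is supported in $\{r \le R\}$, let $\gamma \in C^\infty(\R)$ be nondecreasing with $\gamma(r) = 0$ for $r \le R$ and $\gamma'(r) = \tan \theta_0$ for $r \ge R+1$, and define $P_\delta(\alpha)$ as the restriction of the natural holomorphic extension $\widetilde P(\alpha)$ (using the holomorphy of $\beta$ in the sector near the positive real axis) to the contour $\{z = r + i\delta\gamma(r) : r \in \R\}$. A WKB argument on $\{\re z > R\}$, exactly as in the proof of Lemma \ref{l:p0}, identifies the unique $L^2$ solution branch of $(\widetilde P(\alpha) - \lambda)\tilde u = 0$ at $+\infty$ and shows $\chi(P(\alpha) - \lambda)^{-1}\chi = \chi(P_\delta(\alpha) - \lambda)^{-1}\chi$ for $\im \lambda > 0$. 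The payoff is that the scaled principal symbol $\rho^2/(1+i\delta\gamma'(r))^2 + \alpha^2(1-W_F)e^{-2(z+\beta(z))} - 1$ has imaginary part $-2\delta\gamma'(r)\rho^2/|1+i\delta\gamma'|^4 + O(\delta e^{-r})$, strictly negative on the characteristic set for large $r$ and small $\delta$, so $P_\delta(\alpha)$ is now semiclassically elliptic as $r \to +\infty$.

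Now split into three regimes. For $\alpha = 0$, mimic the proof of \eqref{e:modelbound0}: construct a compactly supported escape function $q$ with $H_{p_\delta(0)} q \le -1$ on $p_\delta(0)^{-1}([-E_0,E_0])$ in the transitional region $\{W_F \ne 1, \gamma = 0\}$, form $P_{\delta,\eps}(0) = e^{\eps Q/h} P_\delta(0) e^{-\eps Q/h}$ with $\eps = 10 M h\log(1/h)$, and close the estimate using the semiclassical elliptic estimate off the characteristic variety together with sharp G\aa rding on it --- ellipticity on $\{W_F = 1\}$ from $-iW_F$, ellipticity for large $r$ from the scaling, and negativity of $\eps H_{p_\delta(0)} q$ in between --- followed by complex interpolation as in the proof of \eqref{e:rkbound}. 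For $\lambda_1 h \le \alpha \le \alpha_0$, with $\alpha_0$ chosen so that $\alpha \ge \alpha_0$ forces $\alpha^2 e^{-2(r+\beta)} \ge 3$ on a sufficiently large intermediate range, the extra term is a bounded nonnegative potential uniformly in $\alpha$; the convexity \eqref{e:convexity} together with the nontrapping assumption ensures that every bicharacteristic on $p_\delta(\alpha)^{-1}([-E_0,E_0])$ either enters $\{W_F = 1\}$ or escapes into the scaled region, so the same escape-function-plus-exponentiation argument (with $q$ and its microlocal cutoff $\phi$ depending on $\alpha$) applies. For $\alpha \ge \alpha_0$ the potential provides genuine ellipticity in the intermediate region, and a direct energy estimate combined with the complex scaling yields the $O(h^{-1} + |\lambda|)$ bound in the style of the proof of \eqref{e:modelbound2} for $\alpha \ge \alpha_0$.

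The main obstacle will be ensuring that all of these escape-function and cutoff constructions depend on $\alpha$ in a uniformly bounded way across the intermediate range $\lambda_1 h \le \alpha \le \alpha_0$, so that the exponentiation $e^{\eps Q/h}$ contributes only the desired $h^{-C_0|\im \lambda|/h}$ loss and the constants $C_0, C$ in \eqref{e:modelboundf} can be chosen independently of $\alpha$. This is plausible since the characteristic varieties $\{p_\delta(\alpha) = E'\}$ vary smoothly and stay uniformly nontrapping on this compact range of $\alpha$, but care is needed to pick $q$ and $\phi$ jointly for the whole family so that their $\Psi^0$-norms and the constant in the sharp G\aa rding inequality do not blow up as $\alpha$ crosses the turning-point threshold $\alpha^2 e^{-2(r+\beta)} = 1$.
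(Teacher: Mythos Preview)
Your reduction by separation of variables and your treatment of $0\le\alpha\le\alpha_0$ match the paper; there a single escape function $q=-C_q(r+1)\rho$ on $\{0\le r\le R_-+1,\,|\rho|\le 2\}$ works for the whole range because the extra potential is a uniformly bounded nonnegative term, so the uniformity concern you flag in your last paragraph is not the real obstacle.

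The genuine gap is at $\alpha\to\infty$. With your fixed, $\alpha$-independent contour $z=r+i\delta\gamma(r)$, the assertion that the imaginary part of the scaled symbol is $-2\delta\gamma'\rho^2/|1+i\delta\gamma'|^4+O(\delta e^{-r})$ is not uniform in $\alpha$: the potential contributes $-\alpha^2 e^{-2(r+\re\beta)}\sin\bigl(2(\delta\gamma+\im\beta)\bigr)$, whose size is governed by $\alpha^2 e^{-2r}$, not by $e^{-r}$. The turning point where $\alpha^2 e^{-2r}\sim 1$ sits at $r\sim\log\alpha$, and for large $\alpha$ this lies far inside the scaled region, where $\delta\gamma(r)\sim\delta\tan\theta_0\,(r-R)$ has already swept through many multiples of $\pi$. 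On the subintervals of $[R+1,\log\alpha]$ where $\sin(2\delta\gamma)<0$ the imaginary part of the potential term is positive, and where in addition $\cos(2\delta\gamma)<0$ the real part of $p_{\delta\gamma}(\alpha)$ is not bounded away from $[-E,E]$ either (on the characteristic set $\rho$ must be small there). So neither ellipticity nor a direct energy estimate in the style of the cusp case---where the potential was large on the whole region of interest and no scaling was needed---is available uniformly in $\alpha$.

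The paper fixes this by making the contour $\alpha$-dependent: it keeps $\gamma_\alpha\le\pi/6$ on $[R+1,R_\alpha]$, with $R_\alpha$ chosen so that $\alpha^2 e^{-2R_\alpha}e^{2\max|\re\beta|}$ is already a small fixed constant, and only lets $\gamma_\alpha'$ become a fixed positive constant beyond $R_\alpha$. This keeps the argument of the rotated potential in the favorable range wherever that term is not negligible, and yields genuine uniform ellipticity $|p_{\gamma_\alpha}(\alpha)-E'|\ge(1+\rho^2)/C$ for all $\alpha\ge\alpha_0$, with no escape function needed. These $\alpha$-dependent contours (going back to \cite{z}) are the essential ingredient in the funnel that has no analogue in \S\ref{s:modelcusp}.
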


\begin{prop}\label{p:modelpropf}
Let $r_0 > R_g$, $\chi_- \in C_0^\infty((-\infty,r_0))$, $\chi_+ \in C_0^\infty((r_0,\infty))$, $\varphi \in C^\infty(\R)$ supported in $(0,\infty)$ and bounded with all derivatives, $E \in (0,1)$, $\Gamma>0$ be given. Then there exists $h_0>0$ such that
\begin{equation}\label{e:modelpropf}
\left\|\chi_+(r)R_F(\lambda)\chi_-(r)\varphi(hD_r)\right\|_{L^2_\varphi(X) \to H^2_{\varphi,h}(X)} = \Oh(h^\infty),
\end{equation}
 for $|\re \lambda| \le E, \, - \Gamma h \le \im \lambda \le h^{-N}$, $h \in (0,h_0]$.
\end{prop}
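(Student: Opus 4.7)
The plan is to follow the proof of Proposition \ref{p:modelprop}, adapted to the funnel dynamics. First, I would separate variables over the eigenspaces of $\Delta_{S_+}$, reducing the estimate to a uniform-in-$\alpha$ bound for the one-parameter family
\[
P_F(\alpha) = h^2 D_r^2 + (1-W_F(r))\alpha^2 e^{-2(r+\beta(r))} + h^2 V(r) - 1 - iW_F(r), \qquad \alpha \in \{0\} \cup [h\lambda_1, \infty).
\]
To obtain the holomorphic continuation to $\im \lambda \ge -\Gamma h$, I would apply a complex scaling $r \mapsto r + i\delta\gamma(r)$ with $\gamma \in C^\infty(\R)$ nondecreasing, vanishing on a neighborhood of $\supp \chi_- \cup \supp \chi_+$, and satisfying $\gamma'(r) = \tan\theta_0$ for $r$ sufficiently large, relying on the holomorphy of $\beta$ from \S\ref{s:assumptions}. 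The same argument as for \eqref{e:p0agree} in the proof of Lemma \ref{l:p0} shows that the cutoff resolvent matrix elements of the resulting scaled operator $P_{F,\delta}(\alpha)$ coincide with the holomorphic continuation of the corresponding matrix elements of $(P_F(\alpha)-\lambda)^{-1}$.

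Next, I would split into three $\alpha$-regimes as in \S\ref{s:modelcusp}. For $\alpha \ge \alpha_0$, with $\alpha_0$ chosen so that $(1-W_F(r))\alpha_0^2 e^{-2(r+\beta(r))} \ge 3$ on a neighborhood of the transition region, the angular potential makes $P_F(\alpha)$ semiclassically elliptic there and an Agmon estimate produces $\Oh(e^{-1/(Ch)})$ decay between $\chi_-$ and $\chi_+$, as in the $\alpha \ge \alpha_0$ case of the proof of \eqref{e:modelpropa}. For $h\lambda_1 \le \alpha \le \alpha_0$, I would apply the rescaling $\tilde r = r/\log(2\alpha_0/\alpha)$, $\tilde h = h/\log(2\alpha_0/\alpha)$ and a gluing parametrix analogous to \eqref{e:palphapar}, reducing the problem to a bounded-potential one-dimensional model. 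Both this reduced model and the $\alpha = 0$ case would then be handled by a positive commutator argument mirroring the one used to prove \eqref{e:modelprop0}.

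For that commutator step I would take an escape function $q(r,\rho) = \varphi_r(r)\varphi_\rho(\rho)$ with $\sqrt{\varphi_r}, \sqrt{\varphi_\rho} \in C_0^\infty(\R)$, chosen so that $\varphi_r$ captures the target position cutoff and is monotone in the direction dictated by the convexity \eqref{e:convexity}, while $\varphi_\rho$ is supported in $\supp \varphi$ and is monotone so that both terms contributing to $H_{p_{F,\delta}(\alpha)}q$ have the same favorable sign. Using \eqref{e:convexity} together with the nonnegativity of $W_F$ and of $(1-W_F)\alpha^2 e^{-2(r+\beta)}$, one should arrange an identity
\[
\re H_{p_{F,\delta}(\alpha)}(q^2) = -b^2 + e
\]
with $b$ elliptic on a neighborhood of the target microsupport, $\supp e$ disjoint from the source microsupport, and $b^2 \ge (2\Gamma+1)q^2$ on the supports of the relevant symbols. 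Quantizing $q$ to $Q$ and using the source equation to render the $\la Q^*Q(P_{F,\delta}(\alpha) - \lambda)u,u\ra$ term $\Oh(h^\infty)$, the sharp G\aa rding inequality then produces the inductive improvement $\|A_0 u\| = \Oh(h^k) \Rightarrow \|A_1 u\| = \Oh(h^{k+1/2})$ with $A_1$ microlocalized at the target; iterating from the base bound supplied by Proposition \ref{p:modelboundf} yields the desired $\Oh(h^\infty)$ bound. The main obstacle I anticipate is uniformity in $\alpha$ across the three regimes and compatibility between the $\alpha$-dependent rescaling, the complex scaling, and the escape function; in particular, verifying that the absorbing term $-iW_F$ contributes a favorable sign in the commutator should transfer essentially verbatim from the treatment in the proof of \eqref{e:modelproppb}.
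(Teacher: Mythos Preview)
Your overall architecture---separate variables, complex scale, Agmon for large $\alpha$, positive commutator for small $\alpha$---matches the paper. The substantive discrepancy is your middle regime: you import the cusp rescaling $\tilde r = r/\log(2\alpha_0/\alpha)$ and the gluing parametrix \eqref{e:palphapar} for $h\lambda_1 \le \alpha \le \alpha_0$. In the funnel this machinery is not only unnecessary but does not apply. In the cusp it was forced by the fact that $\alpha^2 e^{-2r}$ is unbounded as $r\to-\infty$, so the classically allowed region has length $\sim\log(1/\alpha)$ and must be compressed; the gluing then splits off an ``elliptic'' piece $P_E$ living where the potential exceeds a fixed threshold. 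In the funnel the potential $(1-W_F(r))\alpha^2 e^{-2(r+\beta(r))}$ is uniformly bounded for $\alpha\le\alpha_0$ (it decays at $+\infty$ and is killed by $1-W_F$ at $-\infty$), so there is no growing region to compress and no elliptic piece to glue away. The paper accordingly uses only two regimes and treats all of $0\le\alpha\le\alpha_0$ at once by the positive commutator argument of \eqref{e:modelproppb}, with the fixed small-$\delta$ scaling of the $\alpha\le\alpha_0$ part of the proof of \eqref{e:modelboundfg}. Dropping the rescaling also eliminates the uniformity concern you raise at the end.

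One further point: for $\alpha\ge\alpha_0$ the paper does not use your fixed contour but the $\alpha$-dependent contours $\gamma_\alpha$ described before \eqref{e:pgaellip}, which render $P_\gamma(\alpha)$ globally elliptic; this is what makes the Agmon estimate (and the base bound from Proposition~\ref{p:modelboundf} that seeds your induction) uniform in $\alpha$. With a single $\alpha$-independent contour the term $\alpha^2 e^{-2(r+\beta)}$ is not uniformly decaying and uniform ellipticity at infinity can fail.
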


To prove these propositions we separate variables over the eigenspaces of $\Delta_{S_+}$, writing $P_F = \bigoplus_{m=0}^\infty  h^2D_r^2 +  (1-W_F(r))(h\lambda_m)^2 e^{-2(r + \beta(r))} + h^2 V(r) - 1 - iW_F(r),$ 
where $0 = \lambda_0 < \lambda_1  \le \cdots$ are square roots of the eigenvalues of $\Delta_{S_+}$. It suffices to prove \eqref{e:modelboundf}, \eqref{e:modelpropf} with $P_F$ replaced by $P(\alpha)$,  with estimates uniform in $\alpha \ge 0$, where
\[
P(\alpha) = h^2D_r^2 +  (1-W_F(r))\alpha^2e^{-2(r + \beta(r))} + h^2 V(r) - 1 - iW_F(r).
\]

Next we use a variant of the method of complex scaling presented in the proof of Lemma \ref{l:p0}, but with contours $\gamma$ depending on $\alpha$ in such a way as to give estimates uniform in $\alpha$; the $\alpha$-dependence is needed because the term $\alpha^2(1-W_F(r))e^{-2(r + \beta(r))}$, although exponentially decaying, is not uniformly exponentially decaying as $\alpha \to \infty$. Such contours were first used in \cite[\S 4]{z}; here we present a simplified approach based on that in \cite[\S 5.2]{Datchev:Thesis}.

Fix $R>R_g$ sufficiently large that
\[
\supp \chi \cup \supp \chi_+ \cup \supp \chi_- \subset(-\infty,R).
\]
and that
\begin{equation}\label{e:betanegf}
\re z \ge R, \ 0 \le \arg z \le \theta_0 \Longrightarrow |\im \beta(z)| \le |\im z|/2,
\end{equation}
where $\theta_0$ is as in \S\ref{s:assumptions}.
Let $\gamma = \gamma_\alpha(r)$ be real-valued, smooth in $r$ with $\gamma'(r)\ge 0$ for all $r$, and obey $\gamma(r) = 0$ for $r \le R$ (here and below $\gamma' = \partial_r \gamma$). Suppose $\gamma'' \in C_0^\infty(\R)$ for each $\alpha$, but not necessarily uniformly in $\alpha$. Now put
\[\begin{split}
P_\gamma(\alpha) = \frac{h^2D_r^2}{(1 + i\gamma'(r))^2} - h \frac{\gamma''(r) h D_r}{(1 + i\gamma'(r))^3}+ \alpha^2(1-W_F(r))e^{-2(r + i \gamma(r) + \beta(r + i \gamma(r)))} \\ + h^2 V(r + i \gamma(r)) - 1 - iW_F(r).
\end{split}\]
If we define the differential operator with complex coefficients
\[\widetilde P(\alpha) = h^2D_z^2 + \alpha^2(1-W_F(z))e^{-2(z + \beta(z))} + h^2 V(z) - 1 - iW_F(z), 
\]
then we have
\[
P(\alpha) = \widetilde P(\alpha)|_{\{z=r: r \in \R\}}, \qquad P_\gamma(\alpha) = \widetilde P(\alpha)|_{\{z=r + i \gamma(r): r \in \R\}}.
\]
If $\chi_0 \in C^\infty(\R)$ has $\supp \chi_0 \cap \supp \gamma = \varnothing$, then
\[
\chi_0(P(\alpha) - \lambda)^{-1} \chi_0 = \chi_0(P_\gamma(\alpha) - \lambda)^{-1} \chi_0, \qquad \im \lambda > 0,
\]
by an argument almost identical to that used to prove \eqref{e:p0agree}; the only difference is we construct WKB solutions which are exponentially growing and decaying as $\re z \to +\infty$ rather than $-\infty$, and we take $
f(z) = (\alpha^2 e^{-2(z + \beta(z))} + h^2 V(z) -1 - \lambda)/h^2.$

Consequently to prove \eqref{e:modelboundf} and \eqref{e:modelpropf}, it is enough to show that
\begin{equation}\label{e:modelboundfg}\left\|(P_\gamma(\alpha) - \lambda)^{-1} \right\|_{L^2(\R) \to H^2_h (\R)} \le C  e^{C_0 |\im \lambda|  /h}, \end{equation}
and
\begin{equation}\label{e:modelpropfg}
\left\|\chi_+(r)(P_\gamma(\alpha) - \lambda)^{-1}\chi_-(r) \varphi(hD_r)\right\|_{L^2(\R) \to H^2_h(\R)} = \Oh(h^\infty),
\end{equation}
for a suitably chosen $\gamma$, with estimates uniform in $\alpha \ge 0$.

 Fix $R_->R$ such that 
\begin{equation}\label{e:imbeta}
|\im \beta(z)| \le \im z/2
\end{equation}
 for $\re z \ge R_-$, $0 \le \arg z \le \theta_0$, with $\theta_0$ as in \S\ref{s:assumptions}.
Take $\alpha_0 > 0$ such that 
\begin{equation}\label{e:a0f}
\alpha_0^2 e^{-2(R+1)} e^{-2\max|\re\beta|} = 8,
\end{equation}
where $\max |\re \beta|$ is taken over $\R \cup \{|z| >R_g, \ 0 \le \arg z \le \theta_0\}$. We consider the cases $\alpha \le \alpha_0$ and $\alpha \ge \alpha_0$ separately.

\begin{proof}[Proof of \eqref{e:modelboundfg} for $0 \le \alpha \le \alpha_0$] Fix 
\[E_0 \in (E,1), \qquad \eps = 10Mh\log(1/h).\]

We use the same complex scaling as in the proof of Lemma \ref{l:p0}.
In this range  $\gamma$ is independent of $\alpha$ and we put $\gamma = \delta \gamma_-$, where $0<\delta \ll 1$ will be specified later, and we require $\gamma_-(r) =0$ for $r \le R_-$, $\gamma_-'(r) \ge 0$ for all $r$, and $\gamma_-'(r) = \tan \theta_0$ for $r \ge R_-+1$.

The semiclassical principal symbol of $P_\gamma(\alpha)$ is
\[\begin{split}
p_\gamma(\alpha) &= \frac{\rho^2}{(1 + i\gamma'(r))^2} + \alpha^2(1-W_F(r))e^{-2(r + i \gamma(r) + \beta(r + i \gamma(r)))} - 1 - iW_F(r) \\
&=  \rho^2 + \alpha^2(1-W_F(r))e^{-2(r  + \beta(r ))} - 1 - iW_F(r) + \Oh(\delta).
\end{split},\]
where the implicit constant in $\Oh$ is uniform in compact subsets of $T^*\R$. Moreover,
\[
\re p_\gamma(\alpha) + 1 \ge \rho^2 - \Oh(\delta),
\]
and, using \eqref{e:imbeta},
\begin{equation}\label{e:ima0}
\begin{split}
\im p_\gamma(\alpha) 
& \le -\alpha^2(1-W_F(r))e^{-2(r + \re \beta(r+i\gamma(r))}\sin(2(\gamma(r) + \im \beta(r+ i \gamma(r)))\\
& \le  -\alpha^2(1-W_F(r))e^{-2(r + \re \beta(r+i\gamma(r))}\sin\gamma(r)\\
&=  -\alpha^2(1-W_F(r))e^{-2(r + \re \beta(r + i \gamma(r))}\gamma(r) (1 + \Oh(\delta^2)), 
\end{split}
\end{equation}
again uniformly on compact subsets of $T^*\R$.
Take $q \in C_0^\infty(T^*\R)$ such that on $\{0 \le r \le R_-+1, \ |\rho| \le 2\}$ we have
\[\begin{split}
q &= - C_q (r+1) \rho,\\
\frac{\re H_{p_\gamma} q}{C_q} 
&= -2 \rho^2 -(W'_F(r) + 2(1 + \beta'(r))(r+1)\alpha^2e^{-2(r  + \beta(r))} + \Oh(\delta)
\\&\le - (\re p_\gamma + 1) \le - \rho^2 + \Oh(\delta),
\end{split}\]
where $C_q>0$ will be specified later, and provided $\delta$ is sufficiently small. Let $Q = \Op(q)$ and put
\[
P_{\gamma,\eps}(\alpha) = e^{\eps Q/h} P_\gamma(\alpha) e^{- \eps Q/h} = P_\gamma(\alpha)  - \eps[P_\gamma(\alpha),Q/h] + \eps^2R,
\]
where $R \in \Psi^{-\infty}(\R)$ (see \eqref{expexp}). As in the proof of Lemma \ref{l:p0}, \eqref{e:modelboundfg} follows from
\begin{equation}\label{e:modelboundfeps0}
\left\|(P_{\gamma,\eps}(\alpha) - E')^{-1} \right\|_{L^2(\R) \to H^2_h (\R)} \le 5/\eps,
\end{equation}
for $E' \in [-E_0,E_0]$. 

The proof of \eqref{e:modelboundfeps0} combines  elements of the proofs of \eqref{e:modelbound0eps0} and \eqref{e:modelbouncd0eps}. Let $\phi \in C_0^\infty(T^*\R)$ be identically $1$ near $\{0 \le r \le R_- + 1,\ | \rho|\le2,\ |\re p_\gamma| \le E_0\}$ and be supported such that $\re H_{p_\gamma}q <0$ on $\supp \phi$. Let $\Phi$ be the quantization of $\phi$. For $\delta$ small enough, and $h$ (and hence $\eps$) small enough depending on $\delta$,  we have $|p_{\gamma,\eps} - E'| \ge \delta (1 +  \rho^2)/C$ on $\supp (1-\phi)$, uniformly in $E' \in [-E_0,E_0]$, in $\alpha \le \alpha_0$ and in $h$, where $p_{\gamma,\eps}(\alpha)$ is the semiclassical principal symbol of $P_{\gamma,\eps}(\alpha)$. Hence, by the semiclassical elliptic estimate \eqref{ellipestrn},
\[
\|(\Id - \Phi) u\|_{H^2_{h}(\R)} \le C \delta^{-1}\|(P_{\gamma,\eps} - E')(\Id - \Phi) u\|_{L^2(\R)} + \Oh(h^\infty)\|u\|_{H^{-N}_{ h}(\R)}.
\]
Using \eqref{e:ima0} and $\supp\phi \subset \{\re H_{p_c}q <0\}$, fix $C_q$ large enough that on $\supp\phi$  we have
\[
\im p_{\gamma,\eps} =  \im p_\gamma + \eps \re H_{p_c} q
\le  -\alpha^2(1-W_F)e^{-2(r + \re \beta)} \gamma (1 + \Oh(\delta^2)) + \eps \re H_{p_c} q \le -\eps.
\]
Then, using the sharp G\aa rding inequality \eqref{gardingrn}, we have, for $h$ sufficiently small,
\[\begin{split}
\|\Phi u\|_{L^2(\R)} \|(P_{C,\eps} - E')\Phi u\|_{L^2(\R)}  &\ge - \la\im (P_{C,\eps} - E')\Phi u, \Phi u \ra_{L^2(\R)} \\
& \ge \eps \|\Phi u\|_{L^2(\R)}^2 - C h \|u\|^2_{L^2(\R)} .
\end{split}\]
This implies  \eqref{e:modelboundfeps0}  just as in the proofs of \eqref{e:modelbound0eps0} and \eqref{e:modelbouncd0eps}. 
\end{proof}

\begin{proof}[Proof of \eqref{e:modelboundfg} for $\alpha \ge \alpha_0$]  Define contours $\gamma = \gamma_\alpha(r)$ as follows. Take $R_\alpha$ such that  
\begin{equation}\label{e:ralphadef}
\alpha^2 e^{-2R_\alpha} e^{2 \max |\re \beta|} = \min\{1/4,(\tan \theta_0)/2\},
\end{equation}
where $\max |\re \beta|$ is taken over $\R \cup \{|z| >R_g, \ 0 \le \arg z \le \theta_0\}$.
Note that  $R_\alpha > R+1$ by \eqref{e:a0f}. Take $\gamma$  smooth and supported in $(R,\infty)$, with $0 \le \gamma'(r) \le 1/2$, and such that
\[\begin{split}
\gamma(r) \le \pi/9, \quad &r \le R+1,\\
\pi/18 \le \gamma(r) \le \pi/6, \quad &R+1 \le r \le R_\alpha,\\
\gamma'(r) = \min\{1/2,\tan \theta_0\}, \quad & r \ge R_\alpha.
\end{split}\]
We prove that
\begin{equation}\label{e:pgaellip}
|p_\gamma(\alpha) - E'| \ge (1+\rho^2)/C,
\end{equation}
uniformly for $-E \le E' \le E$ and $\alpha \ge \alpha_0$, by considering each range of $r$ individually. By \eqref{ellipestrn} this implies \eqref{e:modelboundfg} for $\alpha \ge \alpha_0$.

\begin{enumerate}
\item For $r \le R+1$ we have
\begin{equation}\label{e:repgar}\begin{split}
\re p_\gamma(\alpha) + 1 &= \frac{\rho^2(1-\gamma'(r)^2)}{|1 + i\gamma'(r)|^4} + \alpha^2(1-W_F(r))\re e^{-2(r + i \gamma(r) + \beta(r + i \gamma(r)))}  \\
&\ge\frac 13 \rho^2 +  \alpha^2(1-W_F(r))e^{-2(r+\re \beta(r+i\gamma(r)))} \cos (3\gamma(r)) \\
&\ge \frac 13 \rho^2 + 4(1-W_F(r)) ,
\end{split}\end{equation}
where for the first inequality we used $\gamma'\le 1/2$ and \eqref{e:imbeta}, and for the second \eqref{e:a0f} and $\gamma \le \pi/9$.
Since $\im p_\gamma = -W_F$ whenever $W_F \ne 0$, this gives \eqref{e:pgaellip} for $r \le R+1$.

\item For $R+1 \le r \le R_\alpha$ we have $\re p_\gamma(\alpha) \ge \frac 13 \rho^2 - 1$ by the same argument as in \eqref{e:repgar}. This gives \eqref{e:pgaellip} for $R+1 \le r \le R_\alpha$ once we note that \eqref{e:imbeta} and \eqref{e:ralphadef} imply
\[\begin{split}
-\im p_\gamma(\alpha) &=  \frac{2\rho^2\gamma'(r)}{|1 + i\gamma'(r)|^4} - \alpha^2\im e^{-2(r + i \gamma(r) + \beta(r + i \gamma(r)))} \\&\ge e^{-2\max|\re\beta|}\sin(\pi/18)\min\{1/2,(\tan \theta_0)/2\}.
\end{split}\]

\item For $r \ge R_\alpha$, note that $\alpha^2 |e^{-2(r + i \gamma(r) + \beta(r + i \gamma(r)))}| \le \gamma'(r)$. We again deduce \eqref{e:pgaellip} by considering two ranges of $\rho$ individually. When $\rho^2/|1 +i\gamma'(r)|^4 \le 1/2$ we have
\[\begin{split}
\re p_\gamma(\alpha) &= \frac{\rho^2(1-\gamma'(r)^2)}{|1 + i\gamma'(r)|^4} + \alpha^2\re e^{-2(r + i \gamma(r) + \beta(r + i \gamma(r)))} - 1 \\
&\le 1/2 + 1/4- 1 = -1/4.
\end{split}\]
When $\rho^2/|1 +i\gamma'(r)|^4  \ge 1/2$ we have
\[\begin{split}
\im p_\gamma(\alpha) &=  \frac{-2\rho^2\gamma'(r)}{|1 + i\gamma'(r)|^4} + \alpha^2\im e^{-2(r + i \gamma(r) + \beta(r + i \gamma(r)))}\\
&\le  \frac{-2\rho^2\gamma'(r)}{|1 + i\gamma'(r)|^4} + \frac{\gamma'(r)}2 \le -3\gamma'(r)/2= -\min\{3/4,3(\tan \theta_0)/2\}.
\end{split}\] 
\end{enumerate}
\end{proof}

For $\alpha \ge \alpha_0$, \eqref{e:modelpropfg} follows from an Agmon estimate just as in the proof of  \eqref{e:modelpropa} for $\alpha \ge \alpha_0$ above. For $\alpha \le \alpha_0$, \eqref{e:modelpropfg} follows from the same positive commutator argument as was used for the proof of \eqref{e:modelproppb}.

\section{Applications}\label{s:applications}
In this section we use the notation
\[
\|u\|_s = \|(1 + \Delta)^{s/2} u\|_{L^2(X)},\ \|A\|_{s \to s'} = \sup_{\|u\|_s = 1} \|A u\|_{s'}, \qquad s,s' \in \R.
\]
We begin by using \eqref{logreg} to deduce polynomial bounds on the resolvent between Sobolev spaces. If $\chi, \widetilde \chi \in C_0^\infty(X)$ have $\widetilde \chi \chi = \chi$, then for any $s \in \R$, we have
\[\begin{split}
\|\Delta \chi u\|_s &\le C(\|\widetilde \chi u\|_s + \|\widetilde\chi \Delta u\|_s).
\end{split}\]
Hence, for any $s,s' \in \R$, we have, if $R_\chi(\sigma) = \chi (\Delta - n^2/4 - \sigma^2)^{-1} \chi$,
\[\begin{split}
\|R_\chi(\sigma)\|_{s \to s} &\le C \|R_{\widetilde \chi}(\sigma)\|_{s' \to s'},\\
\|R_\chi(\sigma)\|_{s \to s'+2} &\le C(1 + |\sigma|^2) \left(\|R_{\widetilde \chi}(\sigma)\|_{s \to s} + \|R_{\widetilde \chi}(\sigma)\|_{s \to s'}\right), \\
\|R_\chi(\sigma)\|_{s \to s'} &\le C(1 + |\sigma|^2)^{-1} \left(\|R_{\widetilde \chi}(\sigma)\|_{s \to s'+2} + \|R_{\widetilde \chi}(\sigma)\|_{s \to s'}\right). 
\end{split}\]
Consequently, for any $\chi \in C_0^\infty(X)$, there is $M_0>0$ such that for any $M_1>0$, $s\in \R$, $s' \le s+2$ there is $M_2>0$ such that
\begin{equation}\label{e:ressob}
\|R_\chi(\sigma)\|_{s \to s'} \le M_2 |\sigma|^{M_0|\im \sigma| + s' - s-1},
\end{equation}
when $|\re \sigma| \ge M_2$, $\im \sigma \ge -M_1$.

\subsection{Local smoothing}
By the self-adjoint functional calculus of $\Delta$, the Schr\"odinger propagator is unitary on all Sobolev spaces: for any $s,t \in \R$, if $u \in H^s(X)$,
\[
\|e^{-it\Delta}u\|_s = \|u\|_s.
\]
The Kato local smoothing effect says that if we localize in space and average in time, then Sobolev regularity improves by half a derivative:
for any $\chi\in C_0^\infty(X)$, $T>0$, $s \in \R$ there is $C>0$ such that if $u \in H^s(X)$,
\begin{equation}\label{e:locsmo}
\int_0^T \left\|\chi e^{-it\Delta} u\right\|^2_{s+1/2}dt \le C \|u\|^2_s.
\end{equation}
This follows by a $TT^*$ argument from \eqref{e:ressob} applied with $\im \sigma = s = 0$, $s' = 1$ (see e.g. \cite[p 424]{bur:smoothing}); note that in this case the bound is uniform as $\sigma \to \pm \infty$.

\subsection{Resonant wave expansions}\label{s:wave} Suppose $\chi (\Delta - n^2/4 - \sigma^2)^{-1} \chi$ is meromorphic for $\sigma \in\C$. For example we may take $(X,g)$ as in \S\ref{infmany}. More generally, if the funnel end is evenly asymptotically hyperbolic as in \cite[Definition 1.2]{g} then this  follows as in the proof of Theorem 1.1 in \cite[p 747]{sz2}, but in the interest of brevity we do not pursue this here.

Then \eqref{e:ressob} implies that, when the initial data is compactly supported,  solutions to the wave equation $(\partial_t^2 + \Delta - n^2/4)u = 0$ can be expanded into a superposition of eigenstates and resonant states, with a remainder which decays exponentially on compact sets:

Let $s \in \R$, $\chi \in C_0^\infty(X)$, $f \in H^{s+1}(X)$, $g \in H^{s}(X)$, $\chi f = f$, $\chi g = g$. For any $M_1>0$,
\begin{equation}\label{e:sobthresh}
s' < s -M_0M_1,
\end{equation}
 there are $C,T>0$ such that if $t \ge T$, $H= \sqrt{\Delta - n^2/4}$, then 
 \[
 \left\|\chi \left(\cos(tH) f + \frac{\sin (tH)}{H} g-   \sum_{\im \sigma_j > -M_1} \sum_{m=1}^{M(\sigma_j)} e^{-i\sigma_j t} t^{m-1} w_{j,m} \right) \right\|_{s'}
  \le C e^{-M_1 t},
\]
where the sum is taken over poles of $R_\chi(\sigma)$ (and is finite by the Theorem),  $M(\sigma_j)$ is the rank of the residue of the pole at $\sigma_j$, and each $w_{j,m}$ is a linear combination of the projections of $f$ and $g$ onto the $m$-th eigenstate or resonant state at $\sigma_j$.
This follows from \eqref{e:ressob} by an argument of \cite{lp,v}; see also \cite[Theorem 3.3]{tz2}  or \cite[Corollary 6.1]{Datchev-Vasy:Gluing}.

\textbf{Remark}. The local smoothing estimate \eqref{e:locsmo} is lossless in the sense that the  result is the same if $(X,g)$ is nontrapping and asymptotically Euclidean or hyperbolic (see \cite[(1.6)]{cpv} for a general result). This is because the resolvent estimates \eqref{logreg} and \eqref{e:eucbetter} agree when $\im \sigma = 0$. The resonant wave expansion exhibits a loss in the Sobolev spaces in which the remainder is controlled: the improvement from \eqref{logreg} to \eqref{e:eucbetter} for $\im \sigma < 0$ means that, when \eqref{e:eucbetter} holds, we can replace \eqref{e:sobthresh} with $s' < s$.

\section{Lower bounds}\label{s:low}
In this section we prove that, in the setting of an exact quotient, the holomorphic continuation of the resolvent grows  polynomially. As in \cite[\S 5.3]{b}, we use the fact that in this case integral kernel of the resolvent can be written in terms of modified Bessel functions.

\begin{prop}\label{p:bessel}
Let $(X,g)$ be given by 
\[
X = \R \times S, \qquad g = dr^2 + e^{2r}dS,
\]
 where $(S,dS)$ is a compact Riemannian manifold without boundary of dimension $n$. Then for any $\chi \in C_0^\infty(X)$ which is not identically $0$, the cutoff resolvent $\chi(\Delta - n^2/4 - \sigma^2)^{-1}\chi $ continues holomorphically from $\{\im \sigma > 0\}$ to $\C \setminus 0$, with a simple pole of rank $1$ at $\sigma = 0$. 
 
 Moreover, if $\chi \ne 0$ in a neighborhood of $0$, for any $\eps > 0$ there exists $ C>0$ such that
\[
\|\chi(\Delta - n^2/4 - \sigma^2)^{-1}\chi\| \ge e^{-C|\im\sigma|} |\sigma|^{2|\im\sigma|-1}/C,
\]
when $\im \sigma \le -\eps$, $|\re \sigma| \ge C$, $|\im\sigma|\le |\re\sigma|/\eps$.
\end{prop}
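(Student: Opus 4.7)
The plan, following Borthwick~\cite[\S5.3]{b}, is to compute $(\Delta-n^2/4-\sigma^2)^{-1}$ explicitly in terms of modified Bessel functions via separation of variables. Let $\{v_m\}_{m\ge0}$ be a real orthonormal $L^2(S)$-eigenbasis of $\Delta_S$ with eigenvalues $\lambda_m^2$, $0=\lambda_0<\lambda_1\le\cdots$. Conjugating $\Delta-n^2/4$ by $e^{nr/2}$ as in \S\ref{spectrum} (with $\beta\equiv0$) turns it into $D_r^2+e^{-2r}\Delta_S$, and decomposing over the $v_m$ gives $(\Delta-n^2/4-\sigma^2)^{-1}=\bigoplus_m R_m(\sigma)$ with $R_m(\sigma)=(D_r^2+\lambda_m^2 e^{-2r}-\sigma^2)^{-1}$. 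The $m=0$ kernel is $\tfrac{i}{2\sigma}e^{i\sigma|r-r'|}$, meromorphic on $\C$ with a simple, rank-one pole at $\sigma=0$. For $m\ge1$, the substitution $x=\lambda_m e^{-r}$ reduces the ODE to the modified Bessel equation of order $i\sigma$; the pair $K_{i\sigma}(\lambda_m e^{-r})$ and $I_{-i\sigma}(\lambda_m e^{-r})$ decays at $r=-\infty$ and (for $\im\sigma>0$) at $r=+\infty$ respectively, with Wronskian in $r$ equal to the constant $-1$ (from $W_x[K_\nu,I_{-\nu}](x)=1/x$ and $\partial_r=-x\partial_x$). Hence
\[
R_m(\sigma)(r,r')=K_{i\sigma}(\lambda_m e^{-r_<})\,I_{-i\sigma}(\lambda_m e^{-r_>}),
\]
which is entire in $\sigma$ since $K_\nu$ and $I_\nu$ are entire in their order. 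Summing in $m$ against the cutoff $\chi$, the tail converges uniformly on compact subsets of $\C\setminus\{0\}$ thanks to the decay $K_{i\sigma}(\lambda_m e^{-r})=\Oh(e^{-\lambda_m e^{-r}/2})$ and Weyl's law for $\lambda_m$, giving the claimed meromorphic continuation.

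For the lower bound, fix a point $(r_*,s_*)\in X$ where $\chi(r_*,s_*)\ne 0$ and pick $\phi,\psi\in C^\infty(S)$ supported near $s_*$ so that the scalar functions
\[
a(r)=\int_S\chi(r,s)\phi(s)v_1(s)\,ds,\qquad b(r)=\int_S\chi(r,s)\psi(s)v_1(s)\,ds
\]
are nonvanishing at $r_*$ (possible for generic $\phi,\psi$; if the $v_1$-component happens to vanish, one simply uses the first $m\ge1$ for which the analog is nonzero, replacing $\lambda_1$ by $\lambda_m$ below). Test the operator via the plane-wave profiles
\[
u(r,s)=e^{-i\sigma r}\tilde f(r)\phi(s),\qquad v(r,s)=e^{i\overline{\sigma}r}\tilde g(r)\psi(s),
\]
with $\tilde f,\tilde g\ge0$ fixed smooth bumps supported in a small neighborhood of $r_*$ on which $|a|,|b|$ are bounded below. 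The matrix coefficient $\langle\chi R(\sigma)\chi u,v\rangle$ equals the bilinear pairing of $R(\sigma)\chi u$ against $\overline{\chi v}=\chi e^{-i\sigma r}\tilde g\psi$, and its $m=1$ component is
\[
\iint K_{i\sigma}(\lambda_1 e^{-r_<})I_{-i\sigma}(\lambda_1 e^{-r_>})\,e^{-i\sigma(r+r')}\,b(r)\tilde g(r)a(r')\tilde f(r')\,dr\,dr'.
\]
The small-argument asymptotics $K_{i\sigma}(x)\sim\tfrac12\Gamma(i\sigma)(x/2)^{-i\sigma}$ and $I_{-i\sigma}(x)\sim(x/2)^{-i\sigma}/\Gamma(1-i\sigma)$ cancel the test-function phase $e^{-i\sigma(r+r')}$ exactly, leaving the nonoscillatory integrand $ba\tilde f\tilde g$ multiplied by the $\sigma$-dependent prefactor $\Gamma(i\sigma)(\lambda_1/2)^{-2i\sigma}/(2\Gamma(1-i\sigma))$. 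The other modes contribute negligibly: the $m=0$ term is $\Oh(e^{C|\im\sigma|}/|\sigma|)$, and the $m\ge2$ tail is suppressed by $K_{i\sigma}$-decay for large $\lambda_m$ (uniformly in the stated sector by Olver's uniform Bessel asymptotics).

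Stirling's formula for $\Gamma(i\sigma)$ combined with the reflection identity $\Gamma(i\sigma)\Gamma(1-i\sigma)=\pi/(i\sinh\pi\sigma)$ yields
\[
\left|\frac{\Gamma(i\sigma)}{\Gamma(1-i\sigma)}\right|\ge\frac{e^{-C|\im\sigma|}|\sigma|^{2|\im\sigma|-1}}{C}
\]
uniformly in the sector $|\im\sigma|\le|\re\sigma|/\eps$. The factor $|(\lambda_1/2)^{-2i\sigma}|=e^{2|\im\sigma|\log(2/\lambda_1)}$ contributes at most $e^{C|\im\sigma|}$, while $\|u\|\|v\|=\Oh(e^{C|\im\sigma|})$ because $|e^{-i\sigma r}|$ and $|e^{i\overline{\sigma}r}|$ are bounded on the compact supports of $\tilde f,\tilde g$. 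Combining these and dividing the matrix coefficient by $\|u\|\|v\|$ yields $\|\chi(\Delta-n^2/4-\sigma^2)^{-1}\chi\|\ge e^{-C|\im\sigma|}|\sigma|^{2|\im\sigma|-1}/C$.

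The chief obstacle is the oscillatory $(x/2)^{-i\sigma}$ factor in the Bessel asymptotic, which under naive test functions would cause cancellation destroying the required growth; the plane-wave design $e^{-i\sigma r}\tilde f(r)\phi(s)$ is calibrated to neutralize this phase exactly, producing a non-oscillatory integrand. Secondary technical points --- uniform validity of the small-argument expansion throughout the full sector, and control of the cross-mode contributions --- are handled by Olver's uniform Bessel asymptotics and the rapid decay of $K_\nu$ in its argument.
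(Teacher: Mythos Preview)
Your proof is essentially correct, and your treatment of the meromorphic continuation part (separation of variables, Bessel functions, the simple pole at $\sigma=0$ from the $m=0$ mode) is the same as the paper's.

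For the lower bound, however, you take a genuinely different route. The paper works directly with the single mode $P_1 = D_r^2 + \lambda_1^2 e^{-2r}$ (implicitly taking $\chi=\chi(r)$) and applies the resolvent to the \emph{fixed} characteristic function $\mathbf{1}_{[0,a]}$; it then expands the product $I_\nu(z')K_\nu(z)$ into its two leading terms and shows via Stirling that the term carrying $\Gamma(-\nu)^2$ produces the required growth $|\sigma|^{2|\im\sigma|-1}$. Your approach instead uses $\sigma$-dependent plane-wave test functions $e^{-i\sigma r}\tilde f(r)\phi(s)$ calibrated so that the oscillatory factor $(x/2)^{-i\sigma}$ in the Bessel asymptotics cancels exactly, leaving the ratio $\Gamma(i\sigma)/\Gamma(1-i\sigma)$ as a clean prefactor. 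This is slicker once the cancellation is set up: you extract the growth directly from Stirling applied to a single Gamma ratio, rather than having to compare two competing terms as the paper does. The cost is that you must divide by $\|u\|\|v\| = \Oh(e^{C|\im\sigma|})$ and, more substantially, control the contributions from the other angular modes.

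That last point is where your argument is thin. Your stated reason for discarding the $m\ge 2$ terms --- ``suppressed by $K_{i\sigma}$-decay for large $\lambda_m$'' --- only handles the high-$m$ tail where $\lambda_m e^{-r}$ is genuinely large. For moderate $m$ (say $m=2,\dots,N$ with $\lambda_m^2 \ll |\sigma|$) the same small-argument asymptotic applies and each such mode contributes a term of the same order, multiplied by $(\lambda_m/2)^{-2i\sigma}$ and the projection coefficients $\int\!\!\int a_m b_m \tilde f\tilde g$. What actually makes the $m=1$ term dominate is that $|(\lambda_m/\lambda_1)^{-2i\sigma}| = (\lambda_m/\lambda_1)^{2\im\sigma} \le 1$ for $\im\sigma<0$, together with the rapid decay in $m$ of the projection coefficients coming from the smoothness of $\chi\phi$. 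You should say this. Alternatively --- and this is closer to what the paper does --- if $\chi$ depends only on $r$ you may simply take $\phi=\psi=v_1$, which isolates the $m=1$ mode exactly and eliminates the issue.
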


\begin{proof}
As in \S\ref{spectrum} a conjugation and separation of variables reduce this to the study of the following family of ordinary differential operators
\[
P_m = D_r^2 + \lambda_m^2 e^{-2r},
\]
where $0=\lambda_0 < \lambda_1 \le \lambda_2 \le \cdots$ are square roots of the eigenvalues of $\Delta$. We will show that $\chi(P_m - \sigma^2)^{-1}\chi$ is entire in $\sigma$ for $m > 0$, and that it is holomorphic in $\C \setminus 0$ with a simple pole of rank $1$ at $\sigma = 0$ for $m = 0$. We will further show that
\[
\|\chi (P_1 - \sigma^2)^{-1} \chi\| \ge e^{-C|\im\sigma|} |\sigma|^{2|\im\sigma|-1}/C,
\]
when $\im \sigma \le -\eps$, $|\re \sigma| \ge C$, $|\im\sigma|\le |\re\sigma|/\eps$.

We write the integral kernel of the resolvent of each $P_m$ using the following formula (see for example \cite[(1.25)]{tz}):
\begin{equation}\label{e:resolventformula}
R_m(r,r') = -\psi_1(\max\{r,r'\})\psi_2(\min\{r,r'\})/W(\psi_1,\psi_2),
\end{equation}
where $\psi_1$ and $\psi_2$ are linearly independent solutions to $(P_m -\sigma^2)u=0$ and $W(\psi_1,\psi_2)$ is their Wronskian. 

If $m = 0$ we take $\psi_1(r)=e^{i r \sigma}$  and $\psi_2(r) = e^{-i r \sigma}$ (this is the only choice for which the resolvent maps $L^2 \to L^2$ for $\im \sigma > 0$), so that $W(\psi_1,\psi_2)=2i\sigma$. Now the asserted continuation is immediate from the formula \eqref{e:resolventformula}. 

To study $m > 0$ we use, as in \cite[\S5.3]{b}, the Bessel functions
\begin{equation}\label{e:besselres}\psi_1(r) =I_\nu\left(\lambda_m e^{-r} \right), \qquad \psi_2(r) = K_\nu\left(\lambda_m e^{-r} \right), \qquad \nu = -i\sigma.\end{equation}
We recall the definitions:
\begin{equation}\label{e:idef}I_\nu(z) = \frac{z^\nu}{2^\nu} \sum_{k=0}^\infty \frac{(z/2)^{2k}}{k!\Gamma(\nu+k+1)},\end{equation}
\begin{equation}\label{e:kdef}K_\nu(z) = \frac \pi{2\sin(\pi\nu)} \left(I_{-\nu}(z) - I_\nu(z)\right).\end{equation}

This pair solves the desired equation (see for example \cite[Chapter 7, (8.01)]{Olver:Asymptotics}) and has $W=1$ (see for example \cite[Chapter 7, (8.07)]{Olver:Asymptotics}). When $\im \sigma > 0$, we have $\re \nu > 0$ and this resolvent maps $L^2 \to L^2$ thanks to the asymptotic
\begin{equation}\label{e:iseries} I_\nu(z) = \frac{z^\nu}{2^\nu \Gamma(\nu+1)}\left(1 + \Oh\left(\frac{z^2}{\nu}\right)\right),\end{equation} 
which is a consequence of \eqref{e:idef}, and thanks to the fact that $K_\nu(z) \sim e^{-z}\sqrt{\pi/2z}$ as $z \to \infty$
(see for example \cite[Chapter 7, (8.04)]{Olver:Asymptotics}). Because $I$ and $K$ are entire in $\nu$, we have the desired homolorphic continuation of the resolvent for all $m > 0$.

To estimate the resolvent we use \eqref{e:kdef} and \eqref{e:iseries} to write
\[\begin{split}
I_\nu(z')K_\nu(z) &= \frac \pi {2\sin(\pi\nu)}I_\nu(z')(I_{-\nu}(z) - I_\nu(z)) \\
& =\frac \pi{\sin(\pi\nu)\Gamma(\nu+1)}  \frac {{z'}^\nu}{2^{\nu+1}}\left( \frac{z^{-\nu}}{2^{-\nu} \Gamma(-\nu+1)} -  \frac{z^\nu}{2^\nu \Gamma(\nu+1)}\right)\left(1 + \Oh\left(\frac{z^2 + {z'}^2}{\nu}\right)\right).
\end{split}\]
Using Euler's reflection formula for the Gamma function (see for example \cite[Chapter 2, (1.07)]{Olver:Asymptotics}),
\[\frac \pi{\sin(\pi\nu)\Gamma(\nu+1)} =  -\Gamma(-\nu) = \frac{\Gamma(-\nu+1)}\nu,\]
it follows that
\begin{equation}\label{e:ikprod}\begin{split}
I_\nu(z')K_\nu(z) &= \frac {{z'}^\nu}{2^{\nu+1}\nu}\left( \frac{z^{-\nu}}{2^{-\nu}} -  \frac{z^\nu\Gamma(-\nu+1)}{2^\nu \Gamma(\nu+1)}\right)\left(1 + \Oh\left(\frac{z^2 + {z'}^2}{\nu}\right)\right) \\
&= \frac {{z'}^\nu}{2^{\nu+1}\nu}\left( \frac{z^{-\nu}}{2^{-\nu}} +  \frac{\nu z^\nu\sin(\pi\nu)\Gamma(-\nu)^2}{2^\nu \pi}\right)\left(1 + \Oh\left(\frac{z^2 + {z'}^2}{\nu}\right)\right).
\end{split}\end{equation}
Using Stirling's formula (see for example \cite[Chapter 8, (4.04)]{Olver:Asymptotics})
\[
\Gamma(-\nu) = e^\nu (-\nu)^{-\nu}\sqrt{-2\pi/\nu} (1 + \Oh(\nu^{-1})),
\]
for $\arg (- \nu)$ varying in a compact subset of $(-\pi,\pi)$ and with the branch of $ (-\nu)^{-\nu}$ taken to be real and positive when $-\nu$ is, we write
\[\begin{split}
|\nu\sin(\pi\nu)\Gamma(-\nu)^2| 
& =  \pi e^{\pi|\im\nu|} e^{2\re\nu} |\nu|^{-2\re \nu} e^{2\im\nu \arg(-\nu)} (1+ \Oh(|\im \nu|^{-1})),\\
&= \pi  e^{2\re\nu} |\nu|^{-2\re \nu} e^{-2\im\nu \arctan\frac{\re\nu}{\im\nu}}(1+ \Oh(|\im \nu|^{-1}))\\
&=  \pi  |\nu|^{-2\re \nu} e^{-\frac 23 (\re \nu)^3/(\im \nu)^2}(1+ \Oh(|\re\nu|^5|\im\nu|^{-4} +  |\im \nu|^{-1})),
\end{split}\]
for $\arg \nu$ varying in a compact subset of $(0,2\pi)$.

To bound the resolvent from below we apply it to the characteristic function of an interval: let $ a > 0 $ and put
\[\begin{split}
u(r) &= -\int_0^a R_1(r,r')dr' = K_\nu(\lambda_1 e^{-r}) \int_0^a I_\nu(\lambda_1 e^{-r'}) dr',
\end{split}\]
where the last equality holds only for $r \le 0$.
Then if $\chi \in C^\infty(\R)$ is identically $1$ on $[-a,a]$ we have
\[\begin{split}
\|\chi (P_1- \sigma^2)^{-1} \chi\|^2 &\ge \frac 1 a\int_{-a}^{a} |u(r)|^2 dr \ge \frac 1 a\int_{-a}^0 \left |K_\nu(\lambda_1 e^{-r}) \int_0^a I_\nu(\lambda_1 e^{-r'}) dr'\right|^2 dr\\
&= \frac 1 {a} \left| \int_0^a I_\nu(\lambda_1 e^{-r'}) dr'\right|^2\int_{-a}^0 \left |K_\nu(\lambda_1 e^{-r})\right|^2 dr.
\end{split}\]
Using \eqref{e:ikprod} we obtain
\[
\|\chi (P_1- \sigma^2)^{-1} \chi\|^2 \ge \frac 1 {4a} \left| \int_{-a}^a \frac {(\lambda_1 e^{-r'})^\nu}{2^{\nu}\nu} dr'\right|^2\int_{-2a}^{-a} \left |\frac{(\lambda_1e^{-r})^{-\nu}}{2^{-\nu}} +  \frac{\nu (\lambda_1e^{-r})^\nu\sin(\pi\nu)\Gamma(-\nu)^2}{2^\nu \pi}\right|^2 dr,
\]
provided $ |\nu| ^{-1}  \le \lambda^{-2}_1 e^{-2a}/c_0$ for a suitably large absolute constant $c_0$.
However,
\[\begin{split}
&\left| \int_{-a}^a \frac {(\lambda_1 e^{-r'})^\nu}{2^{\nu+1}\nu} dr'\right| = \frac{\lambda_1^{\re \nu}}{2^{\re\nu + 1}|\nu|^2}|e^{a\nu}-e^{-a\nu}|\ge \\&\frac{\lambda_1^{\re \nu}}{2^{\re\nu + 1}|\nu|^2}\left(e^{a |\re \nu|} - e^{-a|\re\nu|}\right) \ge e^{-C|\re \nu|}/(C|\nu|^2).
\end{split}\]
Then define $f(\nu)$ and $g(\nu)$ by
\[\begin{split}
\left |\frac{(\lambda_1e^{-r})^{-\nu}}{2^{-\nu}} +  \frac{\nu (\lambda_1e^{-r})^\nu\sin(\pi\nu)\Gamma(-\nu)^2}{2^\nu \pi}\right| &\ge  \frac 12 |\nu|^{-2\re \nu} e^{-\frac 23 \frac{(\re \nu)^3}{(\im \nu)^2}}  \frac{(\lambda_1e^{-r})^{\re\nu}}{2^{\re\nu}} - \frac{2^{\re\nu}}{(\lambda_1e^{-r})^{\re\nu}} \\
&= f(\nu) g(\nu) e^{-\re \nu r} - e^{\re \nu r}/g(\nu).
\end{split}\]
So, provided $\re \nu \le0$,
\[\begin{split}
\int_{-2a}^a \left |\frac{(\lambda_1e^{-r})^{-\nu}}{2^{-\nu}} +  \frac{\nu (\lambda_1e^{-r})^\nu\sin(\pi\nu)\Gamma(-\nu)^2}{2^\nu \pi}\right|^2dr &\ge \int_{-2a}^{-a} \left(f^2g^2 e^{-2\re \nu r} - 2f\right) dr \\
&\ge a(f^2g^2e^{-4|\re \nu| a} -2f).
\end{split}\]
Then if additionally $2 \le fg^2e^{-4|\re \nu| a}/2$ (it suffices to require $\re \nu \le - \eps$ and then $|\nu|$ sufficiently large depending on $\eps$), we have
\[
\int_{-2a}^a \left |\frac{(\lambda_1e^{-r})^{-\nu}}{2^{-\nu}} +  \frac{\nu (\lambda_1e^{-r})^\nu\sin(\pi\nu)\Gamma(-\nu)^2}{2^\nu \pi}\right|^2dr \ge a  f^2g^2e^{-4|\re \nu| a}/2,
\]
so that
\[
\|\chi (P_1- \sigma^2)^{-1} \chi\|^2 \ge \frac {e^{-C|\re \nu|}}{C|\nu|^2}|\nu|^{4|\re \nu|}. 
\]
\end{proof}

\section*{Appendix. The curvature of a warped product}

The result of this calculation is used in the examples in \S\ref{s:examples}, and  although it is well known, we include the details for the convenience of the reader. 
For this section only, let $(S,\tilde g)$ be a compact Riemannian manifold, and let $X = \R \times S$ have the metric
\[g = dr^2 + f(r)^2 \tilde g,\]
where $f \in C^\infty(\R;(0,\infty))$. Let $p \in X$, let $P$ be a two-dimensional subspace of $T_pX$, and let $K(P)$ be the sectional curvature of $P$ with respect to $g$. We will show that if $\D_r \in P$, then
\[K(P) = - f''(r)/f(r),\]
while if $P \subset T_pS$ and $\widetilde K(P)$ is the sectional curvature of $P$ with respect to $\tilde g$, then
\[K(P) = (\widetilde K(P) - f'(r)^2)/f(r)^2.\]

We work in coordinates $(x^0,\dots,x^n)=(r,x^1,\dots,x^n)$, and write
\[g = g_{\alpha\beta}dx^\alpha dx^\beta = dr^2 + g_{ij}dx^idx^j = dr^2 + f(r)^2 \tilde g_{ij}dx^idx^j,\]
using the Einstein summation convention. We use Greek letters for indices which include $0$, that is indices which include $r$, and Latin letters for indices which do not. Then
\[\D_\alpha g_{r\alpha} = 0,\qquad \D_rg_{jk} = 2f^{-1}f'g_{jk}, \qquad \D_ig_{jk} = f^2\D_i\tilde g_{jk}.\]
We write $\Gamma$ for the Christoffel symbols of $g$, and $\widetilde\Gamma$ for those of $\tilde g$. These are given by
\[{\Gamma^r}_{r\alpha} = {\Gamma^\alpha}_{rr} = 0, \qquad {\Gamma^r}_{jk} = -f^{-1}f'g_{jk}, \qquad {\Gamma^i}_{jr} = f^{-1}f' \delta^i_j, \qquad {\Gamma^i}_{jk} = {\widetilde \Gamma^i}_{jk}.\]
Let $R$ be the Riemann curvature tensor of $g$:
\[{R_{\alpha\beta\gamma}}^\delta = \D_\alpha {\Gamma^\delta}_{\beta\gamma} + {\Gamma^\eps}_{\beta\gamma}{\Gamma^\delta}_{\alpha\eps} - \D_\beta {\Gamma^\delta}_{\alpha\gamma} - {\Gamma^\eps}_{\alpha\gamma}{\Gamma^\delta}_{\beta\eps}.\]
Now if $P \subset T_pX$ is spanned by a pair of orthogonal unit vectors $V^\alpha\D_\alpha$ and $W^\alpha\D_\alpha$, then $K(P) = R_{\alpha\beta\gamma\delta}V^\alpha W^\beta W^\gamma V^\delta$, and similarly for $\widetilde R$ and $\widetilde K$. 
Then
\[{R_{ijk}}^\ell =  {\widetilde{R}_{ijk}}^{\phantom{ijk}\ell} + {\Gamma^r}_{jk}{\Gamma^\ell}_{ir} - {\Gamma^r}_{ik}{\Gamma^\ell}_{jr} = {\widetilde R_{ijk}}^{\phantom{ijk}\ell} + (f^{-1})^2 (f')^2(-\delta^\ell_i g_{jk} + \delta^\ell_j g_{ik}),\]
\[{R_{rjk}}^r = \D_r{\Gamma^r}_{jk} - {\Gamma^m}_{rk}{\Gamma^r}_{jm} = - (f^{-1}f'g_{jk})' + (f^{-1}f')^2g_{jk} = -f^{-1}f''g_{jk}.\]
If  $\D_r \in P$ we take $V = \D_r$ and $W=W^j\D_j$ any unit vector in $T_pX$ orthogonal to $V$. Then 
\[K(P) = R_{rjkr}W^jW^k = -f^{-1}f''g_{jk}W^jW^k = -f^{-1}f''.\]
Meanwhile if $\D_r \perp P$ we may write $V=V^j\D_j$ and $W=W^j\D_j$. Then
\begin{align*}K(P) 
&= \left(f^2\tilde R_{ijk\ell}+ (f^{-1})^2 (f')^2(-g_{\ell i} g_{jk} + g_{\ell j} g_{ik})\right)V^iW^jW^kV^\ell.
\intertext{using the fact that $fV$ and $fW$ are orthogonal unit vectors for $\tilde g$, we see that}
K(P) &= f^{-2}\tilde K(P) - (f^{-1})^2 (f')^2.\end{align*}

\end{document}